\DeclareFontFamily{U}{mathx}{\hyphenchar\font45}
\DeclareFontShape{U}{mathx}{m}{n}{
      <5> <6> <7> <8> <9> <10>
      <10.95> <12> <14.4> <17.28> <20.74> <24.88>
      mathx10
      }{}
\DeclareSymbolFont{mathx}{U}{mathx}{m}{n}
\DeclareMathAccent{\widecheck}{0}{mathx}{"71}
\DeclareMathAccent{\widetilde}{0}{mathx}{"72}
\DeclareMathAccent{\widebar}{0}{mathx}{"73}
\DeclareMathAccent{\widevec}{0}{mathx}{"74}
\DeclareMathAccent{\widehat}{0}{mathx}{"70}
\DeclareMathAccent{\widefrown}{0}{mathx}{"75}
\DeclareMathAccent{\chinesehat}{0}{mathx}{"69}
\theoremstyle{plain}
\newtheorem{The}{Theorem}
\newtheorem*{The*}{Theorem}
\newtheorem{Pro}{Proposition}
\newtheorem{Lem}{Lemma}
\newtheorem{Cor}{Corollary}
\newtheorem*{Cor*}{Corollary}
\newtheorem{Con}{Conjecture}
\theoremstyle{definition}
\newtheorem{Rem}{Remark}
\newtheorem{Exa}{Example}
\newtheorem*{Rem*}{Remark}
\newtheorem*{Con*}{Convention}
\numberwithin{equation}{section}
\def\Res{{\,\rm Res}}
\DeclareMathOperator{\tr}{tr}
\DeclareMathOperator{\Id}{Id}
\DeclareMathOperator{\vol}{vol}
\renewcommand{\Im}{\operatorname{Im}}
\renewcommand{\Re}{\operatorname{Re}}
\newcommand{\wt}[1]{\widetilde{#1}}
\newcommand{\off}{\text{off}}
\newcommand{\diag}{\text{diag}}
\newcommand{\Lawson}{\mathfrak L}
\newcommand{\newPhi}{\vartheta}
\newcommand{\R}{\mathbb{R}}
\newcommand{\C}{\mathbb{C}}
\newcommand{\Z}{\mathbb{Z}}
\renewcommand{\S}{\mathbb{S}}  
\newcommand{\T}{\mathbb{T}}
\begin{document}
\title[Enclosed volume for CMC surfaces]{The Enclosed Volume for Periodic Constant Mean Curvature Surfaces}

\author{Lynn Heller}
\author{Sebastian Heller}
\author{Martin Traizet}

\bigskip
\noindent
\address{Beijing Institute of Mathematical Sciences and Applications\\
  Beijing 101408, China
}\\
\email{lynn@bimsa.cn}
 \noindent 
 \address{Beijing Institute of Mathematical Sciences and Applications\\
  Beijing 101408, China
}\\
\email{sheller@bimsa.cn}
 \noindent \address{Institut Denis Poisson, CNRS UMR 7350 \\
Facult\'e des Sciences et Techniques,
Universit\'e de Tours}\\
\email{martin.traizet@lmpt.univ-tours.fr}





\begin{abstract}
We establish a general formula for the enclosed volume of constant mean curvature (CMC) surfaces in Euclidean three space with translational periods forming a lattice.  The formula relates the volume to the surface area, a Wess-Zumino-Witten-type  term, and a newly defined curvature term of the associated family of flat connections, thereby extending the classical Minkowski formula for closed CMC surfaces. Interpreting the volume as a gauge-invariant quantity, we apply the result to a variety of examples and provide explicit computations. As an application, we construct a counterexample to the isoperimetric problem in $\T^2 \times \mathbb{R}$, disproving the conjecture that minimizers are restricted to spheres, cylinders, or pairs of planes.
\end{abstract}

\maketitle
\setcounter{tocdepth}{1}
\tableofcontents

\section{Introduction}

Constant mean curvature (CMC) surfaces are critical points of the area functional
under a volume constraint and play a central role in differential geometry.
For a closed CMC immersion
$f \colon \Sigma \to \mathbb{R}^3$
with mean curvature $H$, the classical Minkowski formula
\begin{equation}\label{eq:Minkowski-classical}
A(f) = 3 H V(f)
\end{equation}
relates surface area and enclosed volume. In the presence of translational periods, i.e., $f \colon \Sigma \to \mathbb{R}^3/\Gamma$ for a lattice $\Gamma \subset \R^3,$ the notion of enclosed volume remains
meaningful but requires a global formulation compatible with periodicity.
The purpose of this paper is to show that for periodic CMC surfaces the enclosed
volume admits a natural gauge-theoretic description in terms of the associated
family of flat connections.
Within this framework, the classical Minkowski formula is replaced by an identity
involving a curvature invariant of the associated family of flat connections.

A natural framework for such a formulation is provided by the gauge-theoretic
description of CMC surfaces (see \cite{Hi}).
Any CMC (periodic) immersion into $\mathbb{R}^3 \cong \mathfrak{su}(2)$ is encoded by its
associated family of flat $\mathrm{SL}(2,\mathbb{C})$-connections $\nabla^\lambda$,
unitary on the unit circle and trivial at the so-called Sym point $\lambda = 1$,
from which the immersion can be recovered via the Sym--Bobenko formula \cite{Bo}.
Many geometric quantities of the surface, such as the induced metric, the Hopf
differential, and the area, admit gauge-invariant expressions in terms of this
family. See \cite[Chapter 13]{Hi} for the case of tori, \cite[Theorem 8]{He13} for the case of symmetric genus 2 surfaces, and \cite{HHT1,HHT2} for the general case of compact CMC and minimal surfaces in $\mathbb S^3.$
Unlike the area, the enclosed volume is not determined by first-order
gauge-theoretic data.
Instead, it is governed by higher-order information of the associated family
$\nabla^\lambda$.

More precisely, let $\nabla^{e^{i\tau}}$ denote the associated family of a periodic
CMC surface.
Using a harmonic gauge, we define a scalar invariant
\[
K = \frac{i}{2\pi} \int_\Sigma \mathrm{tr}(D' \wedge D''),
\]
where $D'$ and $D''$ are the first and second derivatives at $\tau=0$ of the
gauged family.
This quantity is gauge invariant and depends only on the gauge class of the
$\nabla^\lambda$.
Geometrically, $K$ may be interpreted as a curvature term associated to the
second-order behaviour of the family in the moduli space of flat connections.

Throughout the paper we normalize the mean curvature to $H=1$.
Our first main result establishes a generalized Minkowski formula for periodic
CMC surfaces.
Let $f \colon \Sigma \to \mathbb{R}^3/\Gamma$ be a compact CMC surface and let
$\rho \colon H_1(\Sigma,\mathbb{Z}) \to \Gamma$ denote the period map.
Then
\begin{equation}\label{eq:Minkowski-generalized}
K
= -\frac{i}{2\pi} A(f)
+ \frac{3i}{2\pi} V(f)
- \frac{3i}{2\pi} V_\Sigma(\rho),
\end{equation}
where $V(f)$ is the enclosed (algebraic) volume of $f$ and
$V_\Sigma(\rho)$ is a  Wess--Zumino--Witten--type (WZW) term depending only on the harmonic
part of the period map.
When the translational lattice $\Gamma$ has rank at most two, the WZW term
vanishes identically, and the formula simplifies to
\[
K = -\frac{i}{2\pi} A(f) + \frac{3i}{2\pi} V(f).
\]
For closed surfaces, $K$ vanishes and the classical Minkowski formula is recovered.

An important consequence of this result is that the enclosed volume of a periodic
CMC surface depends only on the gauge class of its associated family of flat
connections.
This makes it possible to compute volumes purely from integrable-systems data.
We develop two complementary approaches to such computations: one based on
spectral data for equivariant surfaces, and a general residue formula expressing
$K$ in terms of the first two coefficients of a DPW potential.

In order to apply the formula, we construct Lawson-type families of doubly periodic CMC surfaces
in $\mathbb T^2 \times \mathbb{R}$.
Using the curvature formula together with explicit DPW data, we compute the
enclosed volume of these surfaces and compare their isoperimetric profiles with
those of the conjectured minimizers.
This yields a counterexample to the conjecture of Hauswirth-P\'erez-Romon-Ros \cite{HPRR}, showing that in the case of the equilateral torus $T^2$
the isoperimetric minimizers in $\mathbb T^2 \times \mathbb{R}$ are not restricted to
spheres, cylinders, or pairs of planes.

The present project grew out of our earlier work on computing area formulas for Lawson minimal surfaces in the three-sphere \cite{HHT2}. Following this, Antonio Ros asked whether similar gauge-theoretic methods could be applied to the corresponding constant mean curvature Lawson cousins in Euclidean three space, and in particular whether the enclosed volume could also be computed as a conserved quantity, with a view toward applications to the isoperimetric problem. Hitchin \cite{Hi2} showed that for minimal surfaces in the three-sphere the enclosed volume admits a gauge-theoretic interpretation as the holonomy of the Witten connection on the Chern-Simons bundle along a certain closed curve in the moduli space of flat connections. Motivated by this result, we first extended Hitchin's approach to constant mean curvature surfaces in $\S^3$ \cite{HHTS3}.  

In the Euclidean setting, the situation is qualitatively different. Periodic constant mean curvature surfaces in $\mathbb{R}^3$ do not close up, and the associated family of flat connections therefore does not define a closed loop in the moduli space of flat connections. As a result, the enclosed volume can no longer be recovered purely as a holonomy of the Chern-Simons bundle. Instead, new ingredients enter the picture: instead of the holonomy-type contribution, one must account for the curvature $K$, reflecting the non-closure of the family in moduli space. The Euclidean construction can thus be viewed as a refinement of the $\S^3$ theory, rather than a generalization, and this refined framework is developed in the present paper.

The paper is organized as follows.
Section~\ref{sec:r3} develops the general framework for describing CMC surfaces
in $\mathbb{R}^3$ with translational periods via families of flat connections and
introduces the definition of enclosed volume in this setting.
In Section~\ref{sec:Kenclosed} we derive the main formula relating the enclosed
volume, the area, the Wess--Zumino--Witten term, and the curvature $K$ of the
associated family at the Sym point.
Section~\ref{section:computeK} is devoted to the computation of the curvature $K$:
we first treat CMC tori using spectral data and then consider higher-genus CMC
surfaces with rank~$2$ period lattices, where $K$ is expressed in terms of residues
of their meromorphic DPW potentials.
In Section~\ref{section:lawson} we construct Lawson-type CMC surfaces in
$\mathbb{T}^2 \times \mathbb{R}$ and compute their enclosed volumes using the
preceding theory.
In Section~\ref{sec:iso} we obtain a counterexample to the conjectured
isoperimetric minimizers.
Finally, Appendix~\ref{section:IFT} contains the technical details for the
construction of DPW potentials for Lawson-type CMC surfaces.

\section{The enclosed volume for periodic CMC surfaces}\label{sec:r3}
This paper is motivated by the isoperimetric problem of flat three-manifolds of the form $\R^3/\Gamma,$ 
where $\Gamma$ is a discrete torsion-free subgroup of the group of Euclidean transformations.
Throughout the paper, assume that $\Gamma<(\R^3,+)$ is a lattice of translations. 
In particular, depending on the rank $r\in\{0,1,2,3\}$ of $\Gamma$, the quotient $\R^3/\Gamma$ is isometric to $\R^3$ (for $r=0$), $\S^1\times \R^2$ (for $r=1$), $T^2\times \R$ (for $r=2$), or $T^3$ (for $r=3$).

We consider surfaces $f\colon\Sigma\to\R^3/\Gamma.$ 
and refer to such maps $f$ as {\em periodic surfaces} -- that is, surfaces with {\em translational periods} corresponding to the lattice $\Gamma$.
 Every periodic surface $f$ gives rise to a well-defined $\R^3$-valued differential $df\in\Omega^1(\Sigma,\R^3)$ whose periods lie in $\Gamma.$
 Conversely, given a 1-form $\omega \in \Omega^1(\Sigma,\R^3)$ whose periods lie in $\Gamma$, one obtains a well-defined map $f\colon \Sigma \to \R^3/\Gamma$ such that $df=\omega$.

In this section, we first describe the integrable systems approach to periodic CMC surfaces via families of flat connections. We then explain the notion of the enclosed volume of periodic surfaces $f$, and
derive some identities related to the Hodge decomposition of its differential $df$.

\subsection{Periodic surfaces and translational lattices}
\label{subs:cmcr3}
We begin by considering a conformally immersed CMC 1 surface $f\colon\tilde{\Sigma}\to \mathbb{R}^3\cong \mathfrak{su}(2)$ (assuming $H=1$ after scaling) that may have translational periods. From such an immersion,  we derive the 
associated family of flat $\mathrm{SL}(2,\C)$-connections on the trivial bundle 
$\C^2 \to \Sigma$. This
integrable systems approach to CMC surfaces is classical (cf.\ \cite{B}, \cite{Hi}). Since many of our later 
computations rely on the normalizations, we spell out the precise formula for the family and provide its derivation for completeness.
\medskip

The Euclidean metric on $\mathfrak{su}(2)$ is given by 
$-\tfrac{1}{2}\tr,$ i.e.,  
$$<\xi,\nu>:=-\tfrac{1}{2}\tr(\xi\nu) \quad \text{ for } \quad  \xi,\nu\in\mathfrak{su}(2).$$
Let $f\colon\Sigma\to\mathfrak{su}(2)$ be a conformal immersion with mean curvature $H=1$. Write the differential as an 
$\mathfrak{su}(2)$-valued one-form,
and define $\Phi\in\Omega^{(1,0)}(\Sigma,\mathfrak{sl}(2,\C))$ and 
$\Phi^*\in\Omega^{(0,1)}(\Sigma,\mathfrak{sl}(2,\C))$ by
\begin{equation}\label{eq:defPhifromf}
\Phi+\Phi^* := \tfrac{i}{2}\, df \ \in \Omega^1(\Sigma,i\,\mathfrak{su}(2)).
\end{equation}
Because $f$ is conformal, the $(1,0)$-part $\Phi$ is nilpotent and nowhere 
vanishing. Moreover, since $i df$ is hermitian symmetric, the $(0,1)$-part is $\Phi^*$, the adjoint of $\Phi$ with respect to the standard 
Hermitian inner product on $\C^2$.

\medskip

\begin{Pro}\label{Prop:family_from_f}
Let $f$ be a conformally immersed CMC surface of mean curvature $H=1$ in
$\mathbb R^3 \cong \mathfrak{su}(2)$ with translational periods, and define
$\Phi,\Phi^*$ by \eqref{eq:defPhifromf}.
Then the connection
\[
\nabla := d - \Phi + \Phi^*
\]
is unitary, and the one-parameter family of connections
\begin{equation}\label{eq:famconn-new}
\nabla^\lambda
:=
d + (\lambda^{-1}-1)\Phi - (\lambda-1)\Phi^*,
\qquad \lambda \in \C^*,
\end{equation}
is flat for all $\lambda \in \C^*$, unitary for $\lambda \in\S^1$, and satisfies
$\nabla^1 = d$.
\end{Pro}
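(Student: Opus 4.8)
The plan is to establish the four assertions---$\nabla^1=d$, unitarity of $\nabla$, unitarity of $\nabla^\lambda$ on $\S^1$, and flatness for all $\lambda$---in increasing order of difficulty, with the flatness statement carrying essentially all of the geometric content. The identity $\nabla^1=d$ is immediate, since at $\lambda=1$ both coefficients $\lambda^{-1}-1$ and $\lambda-1$ vanish. Throughout I use the convention that a connection $d+A$ on $(\C^2,\langle\cdot,\cdot\rangle)$ is unitary exactly when its connection form is skew-Hermitian, $A^*=-A$, where $*$ denotes the adjoint combining the matrix conjugate-transpose with complex conjugation of the form part; by hypothesis $\Phi^*$ is precisely this adjoint of $\Phi$.

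For the unitarity statements I would first rewrite the family as $\nabla^\lambda=\nabla+\lambda^{-1}\Phi-\lambda\Phi^*$ with $\nabla=d-\Phi+\Phi^*$. Then $(-\Phi+\Phi^*)^*=-\Phi^*+\Phi=-(-\Phi+\Phi^*)$ shows $\nabla$ is unitary. For $\lambda\in\S^1$ one uses $\bar\lambda=\lambda^{-1}$ to compute $(\lambda^{-1}\Phi-\lambda\Phi^*)^*=\lambda\Phi^*-\lambda^{-1}\Phi=-(\lambda^{-1}\Phi-\lambda\Phi^*)$, so the connection form of $\nabla^\lambda$ remains skew-Hermitian and $\nabla^\lambda$ is unitary on the circle.

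The heart of the matter is flatness. I would compute the curvature $F^\lambda=dA_\lambda+A_\lambda\wedge A_\lambda$ of $\nabla^\lambda=d+A_\lambda$ directly. Because $\Phi$ is of type $(1,0)$ and $\Phi^*$ of type $(0,1)$, the quadratic terms $\Phi\wedge\Phi$ and $\Phi^*\wedge\Phi^*$ vanish for type reasons, leaving only the mixed term $\Omega:=\Phi\wedge\Phi^*+\Phi^*\wedge\Phi$. Collecting powers of $\lambda$ yields $F^\lambda=\lambda^{-1}(d\Phi+\Omega)+(-d\Phi+d\Phi^*-2\Omega)+\lambda(-d\Phi^*+\Omega)$, so flatness for all $\lambda\in\C^*$ is equivalent to the vanishing of these three coefficients. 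The key simplification is that $d(\Phi+\Phi^*)=\tfrac{i}{2}\,d(df)=0$, hence $d\Phi^*=-d\Phi$; this collapses the two coefficients at $\lambda^{\pm1}$ into the single equation $d\Phi+\Omega=0$, after which the $\lambda^0$ coefficient vanishes automatically. Thus everything reduces to the integrability equation
$$d\Phi+\Phi\wedge\Phi^*+\Phi^*\wedge\Phi=0.$$

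This last equation is the main obstacle, and it is exactly where the CMC hypothesis is used. In a local conformal coordinate $z$ one has $\Phi=\tfrac{i}{2}f_z\,dz$, and since $f$ is $\mathfrak{su}(2)$-valued one computes $(f_z)^*=-f_{\bar z}$, giving $\Phi^*=\tfrac{i}{2}f_{\bar z}\,d\bar z$. Extracting the coefficient of $dz\wedge d\bar z$, the equation reduces to the pointwise identity $f_{z\bar z}=\tfrac{i}{2}[f_z,f_{\bar z}]$. Conformality, $\langle f_z,f_z\rangle=0$, forces $f_{z\bar z}$ to be normal to the surface, so only its normal component is at stake; writing $[f_z,f_{\bar z}]$ as a multiple of the cross product $f_z\times f_{\bar z}$, hence of $e^{2u}N$, and $f_{z\bar z}$ as the mean-curvature vector, the identity becomes equivalent to $H=1$ once the orientation and normalization conventions are fixed. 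Since $f$ is a CMC~$1$ immersion this holds, and flatness follows. Equivalently one may appeal to the classical Ruh--Vilms theorem, that the associated family is flat for all $\lambda$ if and only if the Gauss map is harmonic, which for immersions into $\R^3$ means $H$ is constant. The only genuinely delicate point is tracking the constants so that the bracket term reproduces precisely $H=1$; the reduction to a single equation is purely formal.
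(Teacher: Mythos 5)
Your proposal is correct and takes essentially the same route as the paper: unitarity via skew-Hermitian connection forms, and flatness by expanding the curvature in powers of $\lambda$, using $d(\Phi+\Phi^*)=0$ to reduce everything to the single equation $d\Phi+\Phi\wedge\Phi^*+\Phi^*\wedge\Phi=0$ (the paper writes this as $d^\nabla\Phi=0=d^\nabla\Phi^*$ and disposes of the $\lambda^0$ term by noting it equals $F^d=0$, while verifying the key equation from $d*df=2HN\,dA$ in real coordinates rather than via your equivalent $f_{z\bar z}=\tfrac{i}{2}[f_z,f_{\bar z}]$). The one loose end you flag yourself --- tracking signs so the bracket term gives $H=+1$ rather than $H=-1$ --- is precisely what the paper's conventions (its choice of Hodge star and of $N$ proportional to $f_x\times f_y$ in $d*df=2HN\,dA$) pin down, so it is a matter of fixing conventions rather than a genuine gap.
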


\begin{proof}
Since $f$ has only translational periods, the differential
$df \in \Omega^1(\Sigma,\mathfrak{su}(2))$ is well defined.
Using \eqref{eq:defPhifromf}, the one-form $-\Phi+\Phi^*$ is also
$\mathfrak{su}(2)$-valued, so
\[
\nabla = d - \Phi + \Phi^*
\]
is a unitary connection.
If $\lambda \in\S^1$, the form $(\lambda^{-1}-1)\Phi-(\lambda-1)\Phi^*$ is
skew-Hermitian, and hence $\nabla^\lambda$ is unitary for all $\lambda \in\S^1$.
Moreover, $\nabla^{\lambda=1}=d$ by construction.
Since $df$ is closed, we have
\[
d(\Phi+\Phi^*)=0.
\]

\medskip

For $\xi,\nu \in \mathfrak{su}(2)$, the vector product satisfies
\[
\xi \times \nu = \tfrac12[\xi,\nu].
\]
Writing locally $dz=dx+idy$, $df=f_xdx+f_ydy$, and $\Phi=\varphi dz$,
$\Phi^*=\varphi^* d\bar z$, we obtain
\[
f_x=-2i(\varphi+\varphi^*) \qquad \text{and} \qquad
f_y=2\varphi-2\varphi^*.
\]
Let $N$ denote the unit normal of $f$ and let $dA \in \Omega^2(\Sigma)$ be the
induced area form.
Then
\begin{equation}
\begin{split}
2N\,dA
&=2(f_x \times f_y)\,dx \wedge dy
=[f_x,f_y]\,dx \wedge dy \\
&=4[-i(\varphi+\varphi^*),\varphi-\varphi^*]\,dx \wedge dy
=-4[\Phi \wedge \Phi^*].
\end{split}
\end{equation}
For a conformal immersion into Euclidean three-space, the mean curvature $H$
satisfies
\[
d*df = 2H N\, dA.
\]
Since $H=1$ by assumption, this gives
\[
d(2\Phi-2\Phi^*) = d*df = 2N\,dA = -4[\Phi \wedge \Phi^*].
\]
Together with $d(\Phi+\Phi^*)=0$, we obtain
\[
d^\nabla \Phi = d\Phi + [\Phi^* \wedge \Phi] = 0,
\qquad
d^\nabla \Phi^* = d\Phi^* - [\Phi \wedge \Phi^*] = 0.
\]

\medskip

For $\lambda \in \C^*$, consider $\nabla^\lambda$ as in
\eqref{eq:famconn-new}.
Since $\Sigma$ is complex one-dimensional, we have
$$\Phi \wedge \Phi = 0 = \Phi^* \wedge \Phi^*,$$ and hence the curvature
\[
F^{\nabla^\lambda}
=\lambda^{-1} d^\nabla \Phi + F^\nabla - [\Phi \wedge \Phi^*]
- \lambda d^\nabla \Phi^*
= F^\nabla - [\Phi \wedge \Phi^*]
\]
is independent of $\lambda$.
As $F^\nabla - [\Phi \wedge \Phi^*] = F^d = 0$, the connection
$\nabla^\lambda$ is flat for all $\lambda \in \C^*$.
\end{proof}

\begin{Rem}
Reversing the arguments in the proof of Proposition~\ref{Prop:family_from_f},
one may start with a family of flat connections
$\nabla^\lambda = \nabla + \lambda^{-1}\Phi - \lambda \Phi^*$
as in Proposition~\ref{Prop:family_from_f}.
If $\nabla^\lambda$ is unitary for $\lambda \in\S^1$ and trivial at $\lambda = 1$,
then integrating $-2i(\Phi+\Phi^*)$ yields a conformal immersion of constant mean
curvature $H=1$.
\end{Rem}

\begin{Rem}[Recovering the immersion: the Sym--Bobenko formula]\label{Rem:scaling2}
Let $F^\lambda$ denote a parallel frame for $\nabla^\lambda$ with
$F^\lambda(p_0)=\mathrm{Id}$.
Differentiating with respect to $\lambda=e^{i\tau}$ at $\tau = 0$ and using \eqref{eq:famconn-new}, we obtain
\[
0
=
\frac{\partial}{\partial\tau}\Big|_{\tau=0}
\bigl(\nabla^{e^{i\tau}} F^{e^{i\tau}}\bigr)
=
d\dot F - i\Phi - i\Phi^*.
\]
Thus
\[
\dot F = i \partial_\lambda |_{\lambda = 1}F^\lambda = -\tfrac 12 f
\]
(up to translation), recovering the classical Sym--Bobenko formula
\cite[Theorem~1.2]{Bo}.
In particular, the CMC immersion $f$ is encoded directly in the
$\lambda$-derivative of the parallel frame at $\lambda=1$.
\end{Rem}
\begin{Rem}[Area formula]\label{rem:area-formula}
The area of $f$ on the fundamental domain $\Sigma$ can be expressed directly
in terms of the Higgs field $\Phi$:
\[
\mathcal A(f)=2i\int_\Sigma \mathrm{tr}(\Phi\wedge\Phi^*).
\]
This follows immediately from the identity
$df=-2i(\Phi+\Phi^*)$ and from the fact that the induced conformal metric is
\[
-\tfrac12\,\mathrm{tr}\bigl((2i(\Phi+\Phi^*))^2\bigr)
=
2\,\mathrm{tr}(\Phi\Phi^*+\Phi^*\Phi).
\]
\end{Rem}

\subsection{Definition and well-posedness of enclosed volume}
Let $\wt\Sigma \to \Sigma$ be the universal covering of the Riemann surface $\Sigma$.
Let $f \colon \wt\Sigma \to \mathfrak{su}(2)$ be a smooth map with translational periods,
and assume that these periods are contained in a lattice $\Gamma < \mathfrak{su}(2)$ of rank~$3$.
In particular, the map $f \colon \Sigma \to \mathfrak{su}(2)/\Gamma$ and the $1$-form
$df \in \Omega^1(\Sigma,\mathfrak{su}(2))$ are well defined.
By abuse of notation, we do not distinguish between the induced periodic map on $\Sigma$
and its lift to $\wt\Sigma$.

In the following, we always assume that $\Sigma$ is connected and that there
exists a compact oriented $3$-manifold $B$ with $\partial B=\Sigma$ together with
a smooth extension $\hat f \colon B \to \mathfrak{su}(2)/\Gamma$ of $f$.
This condition is equivalent to the vanishing of the induced homology class
$[f] \in H_2(\mathfrak{su}(2)/\Gamma,\Z)$; see, for example,
\cite[Appendix]{LM}.
Note that this assumption does not hold for arbitrary maps, e.g., for a planar
$2$-torus embedded in a $3$-torus.
For CMC surfaces with $H \neq 0$, however, this condition holds automatically;
see also \cite[Lemma~2.3]{RiRo} for the embedded case.
We include a short proof for the general situation.

\begin{Lem}
Let $\Sigma$ be a compact surface and let $\Gamma \subset \mathfrak{su}(2)$ be a
lattice.
Let $f \colon \Sigma \to \mathfrak{su}(2)/\Gamma$ be a closed CMC surface with
$H \neq 0$.
Then $[f]=0 \in H_2(\mathfrak{su}(2)/\Gamma,\Z)$, i.e., there exists a 3-manifold $B,$ a smooth map $\hat f \colon B \to \mathfrak{su}(2)/\Gamma,$ and  $f=\hat f|_{\partial B}$
with
$\partial B=\Sigma$.
If $f$ is embedded, then it bounds a compact $3$-manifold.
\end{Lem}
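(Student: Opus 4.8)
The plan is to detect the homology class $[f]\in H_2(\mathfrak{su}(2)/\Gamma,\Z)$ by integration against harmonic $2$-forms and to show that the resulting periods all vanish as a consequence of the CMC structure equation. Identifying $\mathfrak{su}(2)\cong\R^3$ and $\mathfrak{su}(2)/\Gamma\cong T^3$, the group $H_2(T^3,\Z)\cong\Z^3$ is free, so the natural map $H_2(T^3,\Z)\to H_2(T^3,\R)$ is injective; it therefore suffices to prove that $\int_\Sigma f^*\omega=0$ for every harmonic $2$-form $\omega$ on $T^3$. The space of such forms is spanned by $\omega_{a,b}:=\alpha_a\wedge\alpha_b$, where $a,b\in\R^3$ and $\alpha_a=\langle a,d\xi\rangle$ denotes the translation-invariant $1$-form obtained by pairing the solder form $d\xi$ of $T^3$ with $a$. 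Since $f^*\alpha_a=\langle a,df\rangle$, the pairing of $[f]$ with the generators is governed by the integrals $\int_\Sigma\langle a,df\rangle\wedge\langle b,df\rangle$.

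The key computation rewrites these integrals in terms of the Gauss map. Working in a local conformal coordinate $z=x+iy$ and using the vector identity $\langle a\times b,\,c\times d\rangle=\langle a,c\rangle\langle b,d\rangle-\langle a,d\rangle\langle b,c\rangle$, I would show
\[
\langle a,df\rangle\wedge\langle b,df\rangle=\langle a\times b,\,f_x\times f_y\rangle\,dx\wedge dy=\langle a\times b,\,N\rangle\,dA,
\]
where the last equality uses $N\,dA=(f_x\times f_y)\,dx\wedge dy$ from the proof of Proposition~\ref{Prop:family_from_f}. Integrating gives $\int_\Sigma f^*\omega_{a,b}=\langle a\times b,\,\int_\Sigma N\,dA\rangle$. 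As $a\times b$ ranges over all of $\R^3$, the vanishing of every such period is equivalent to the single vector identity $\int_\Sigma N\,dA=0$.

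It remains to establish $\int_\Sigma N\,dA=0$, and here the hypothesis $H\neq 0$ enters decisively. The conformal CMC equation $d*df=2HN\,dA$ (already used above) yields $\int_\Sigma N\,dA=\tfrac{1}{2H}\int_\Sigma d(*df)$, and since $df$, hence $*df$, is a globally well-defined $1$-form on the closed surface $\Sigma$, Stokes' theorem forces $\int_\Sigma d(*df)=0$. Dividing by $H\neq 0$ gives $\int_\Sigma N\,dA=0$, so all periods vanish and $[f]=0$ in $H_2(T^3,\Z)$; the existence of a compact oriented $B$ with a smooth extension $\hat f$ then follows from the homological equivalence recorded above (see \cite[Appendix]{LM}). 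For the embedded case, a connected closed orientable surface that is null-homologous separates the closed orientable $3$-manifold $T^3$, so $\Sigma$ bounds the closure of one component of the complement, which is the desired compact $3$-manifold. The only genuinely non-elementary point is the passage from $[f]=0$ to an actual smooth bounding $(B,\hat f)$ (equivalently, the triviality of the oriented bordism class), which is precisely why this step is cited rather than reproved; the analytic heart of the argument, the flux identity $\int_\Sigma N\,dA=0$, is otherwise immediate.
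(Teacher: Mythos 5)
Your proof is correct and rests on the same mechanism as the paper's: both detect $[f]$ by pairing against translation-invariant $2$-forms and use the CMC equation $d*df = 2HN\,dA$ to make that pairing vanish --- your flux identity $\int_\Sigma N\,dA = 0$ is exactly the integrated form of the paper's observation that $f^*(dx_1\wedge dx_2) = \tfrac{1}{H}\,d*df_3$ is exact (the paper argues by contradiction, you argue directly, and you additionally spell out the embedded case and the $\Z$-versus-$\R$ coefficient point, which the paper leaves implicit or delegates to references). The only caveat is that your identification $\mathfrak{su}(2)/\Gamma\cong T^3$ tacitly assumes $\Gamma$ has rank $3$, whereas the lemma allows lower rank; this is harmless, since for rank at most $2$ the group $H_2(\mathfrak{su}(2)/\Gamma,\Z)$ is still free and is detected by the same translation-invariant forms, so your computation applies verbatim.
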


\begin{proof}
Assume that $[f]\neq 0$.
Then there exists a class
$\omega \in H^2_{\mathrm{dR}}(\mathfrak{su}(2)/\Gamma,\R)$ such that
$f^*\omega \neq 0 \in H^2(\Sigma,\R)$.
Without loss of generality, we may take $\omega = dx_1 \wedge dx_2$, where
$(x_1,x_2,x_3)$ are the Euclidean coordinates on $\mathfrak{su}(2)$.
On the other hand, the CMC condition implies
$d*df = H\, df \wedge df$.
Hence
\[
f^*\omega = df_1 \wedge df_2 = \tfrac{1}{H} d*df_3,
\]
which is exact, a contradiction.
Therefore $[f]=0 \in H_2(\mathfrak{su}(2)/\Gamma,\Z)$, and $f$ extends to a smooth
map $\hat f$ on a $3$-manifold $B$ with $\partial B=\Sigma$.
\end{proof}

We denote the induced volume form on $\mathfrak{su}(2)\cong\R^3$ by $\det.$
We now show that the enclosed volume of a periodic surface is well-defined.

\begin{Lem}\label{lem:evT3}
Let $\Gamma$ be a rank 3 lattice, $\Sigma$ be a compact surface, and $f\colon \Sigma\to\mathfrak{su}(2)/\Gamma$ be smooth. Let $B$ be a 3-manifold with boundary $\Sigma,$ and 
$\hat f\colon B\to \mathfrak{su}(2)/\Gamma$ be an extension of $f$. Then,
the enclosed volume 
\[\mathcal V(f):=\int_B\hat f^*\det \in\; \R\mod \vol(\mathfrak{su}(2)/\Gamma)\]
is well-defined, i.e., independent of the choice of $B$ and $\hat f.$

If the translational periods are contained in a  lattice $\underline\Gamma\subset\Gamma$ of rank 2, and
$\hat f\colon B\to\mathfrak{su}(2)/\underline \Gamma$ extends $f$, then 
\[\mathcal V(f):=\int_B\hat f^*\det\in\R\]
is independent of the choice of $B$ and $\hat f$  with periods contained  entirely in $\underline\Gamma$.
\end{Lem}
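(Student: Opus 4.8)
The plan is to reduce both statements to the computation of $\int_B \hat f^*\det$ over a \emph{closed} oriented $3$-manifold obtained by gluing two competing choices of bounding data. Concretely, suppose $(B_1,\hat f_1)$ and $(B_2,\hat f_2)$ are two admissible choices, so that $\partial B_1=\partial B_2=\Sigma$ and $\hat f_1|_\Sigma=\hat f_2|_\Sigma=f$. Reversing the orientation of $B_2$ and gluing along $\Sigma$ produces a closed oriented $3$-manifold $B:=B_1\cup_\Sigma(-B_2)$, and since the two maps agree on $\Sigma$ they glue to a map $\hat f\colon B\to\mathfrak{su}(2)/\Gamma$ which is smooth on each piece and continuous across $\Sigma$. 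With the induced orientations,
\[
\int_{B_1}\hat f_1^*\det-\int_{B_2}\hat f_2^*\det=\int_B\hat f^*\det ,
\]
so everything comes down to understanding the right-hand side. The main point is purely topological: it is governed by the third (co)homology of the target.

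For the rank $3$ case the target is the $3$-torus $T^3:=\mathfrak{su}(2)/\Gamma$. Here $\det$ is a translation-invariant, hence closed, top-degree form, and $H_3(T^3,\Z)\cong\Z$ is generated by the fundamental class $[T^3]$ on which $[\det]$ evaluates to $\vol(\mathfrak{su}(2)/\Gamma)$. Writing $\hat f_*[B]=n\,[T^3]$ with $n\in\Z$ (summing over components if $B$ is disconnected), the de Rham pairing gives
\[
\int_B\hat f^*\det=\big\langle[\det],\hat f_*[B]\big\rangle=n\,\vol(\mathfrak{su}(2)/\Gamma).
\]
Thus the two volume computations differ by an integer multiple of $\vol(\mathfrak{su}(2)/\Gamma)$, which is exactly the assertion that $\mathcal V(f)$ is well-defined in $\R\bmod\vol(\mathfrak{su}(2)/\Gamma)$.

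For the rank $2$ case the relevant target is $\mathfrak{su}(2)/\underline\Gamma\cong T^2\times\R$, and the extensions are required to take values in this quotient. Since $T^2\times\R$ is homotopy equivalent to $T^2$, one has $H^3(T^2\times\R,\R)=0$, so $\det$ is exact; explicitly, in coordinates in which $\underline\Gamma$ acts on $x_1,x_2$ and $x_3$ is the free direction, $\det=d\big(x_3\,dx_1\wedge dx_2\big)$, and $x_3\,dx_1\wedge dx_2$ descends to a global $2$-form $\eta$ on $T^2\times\R$. Stokes' theorem on the closed manifold $B$ then yields
\[
\int_B\hat f^*\det=\int_B d(\hat f^*\eta)=\int_{\partial B}\hat f^*\eta=0 ,
\]
so the two computations agree exactly and $\mathcal V(f)\in\R$ is well-defined.

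The only genuinely technical step is the gluing: one must check that $\int_B\hat f^*\det$ really equals $\big\langle[\det],\hat f_*[B]\big\rangle$ even though $\hat f$ is only piecewise smooth across $\Sigma$. This is handled by a standard collar argument --- after a homotopy rel $\Sigma$ the map may be taken smooth, or one simply notes that $\hat f^*\det$ is a bounded integrable top-form on $B$ whose integral computes the de Rham pairing because the pullback on cohomology is defined for continuous maps. I expect this regularity bookkeeping to be the main (but entirely routine) obstacle; the conceptual content is the contrast between $H^3(T^3,\R)\neq0$, which forces the mod-$\vol$ ambiguity, and $H^3(T^2\times\R,\R)=0$, which removes it once the extensions are constrained to $\mathfrak{su}(2)/\underline\Gamma$.
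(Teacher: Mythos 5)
Your proposal is correct and follows essentially the same route as the paper: glue the two bounding choices along $\Sigma$ into a closed oriented $3$-manifold, identify the resulting integral in the rank-$3$ case as $\deg \cdot \vol(\mathfrak{su}(2)/\Gamma)$ (your fundamental-class pairing is just the homological formulation of the paper's degree formula), and in the rank-$2$ case use that $\det$ pulls back to an exact form on $T^2\times\R$ so that Stokes' theorem kills the difference (the paper's $dh\wedge\eta = d(h\eta)$ is exactly your primitive $x_3\,dx_1\wedge dx_2$ in disguise). Your extra remark on the piecewise-smoothness of the glued map is a reasonable bit of bookkeeping that the paper leaves implicit.
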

\begin{proof}
Let $\tilde B$ be another oriented $3$-manifold with
$\partial \tilde B=(\Sigma,-o)$ (where $-o$ denotes the opposite orientation),
and let $\tilde f\colon \tilde B\to \mathfrak{su}(2)/\Gamma$ be another extension.
Consider the closed oriented $3$-manifold
$M:=B\cup_\Sigma\tilde B$
and the map
$F:=\hat f\cup \tilde f\colon M\to\mathfrak{su}(2)/\Gamma$.
Then
\[
\int_M F^*\det
=
\deg(F)\,\vol(\mathfrak{su}(2)/\Gamma),
\]
which proves the first statement.

For the second part, the difference between the two values of
$\int_B \hat f^*\det$ obtained from different choices of extensions
can be written as
\[
\int_M dh\wedge\eta,
\]
where $M$ is as above, $h\colon M\to\R$ is the (oriented) height function with
respect to a direction transverse to the plane spanned by $\underline\Gamma$, and
$\eta\in\Omega^2(M)$ is closed.
Hence, the statement follows from Stokes' theorem.
\end{proof}
\begin{Rem}
What we call the enclosed volume is also referred to as the algebraic volume in the surface theory 
literature.
\end{Rem}

\begin{Con*}
In the following, we always split
$df\in\Omega^1(\Sigma,\mathfrak{su}(2))$ into its harmonic part and its exact part
\begin{equation}\label{eq:hodge-splitting}df=\psi+d\xi,\end{equation}
where  $\xi\colon \Sigma\to \mathfrak{su}(2)$ is globally well-defined and $\psi$ is harmonic.
\end{Con*}

In particular, the periods of $df$ equal the periods of $\psi.$ Furthermore, if 
$df$ extends to $B$ as a closed 1-form $\omega\in\Omega^1(B,\mathfrak{su}(2))$, then the harmonic part
$\psi$ and the exact part $d\xi$ extend as closed, respectively exact, 1-forms $\hat\psi$ and $d\hat\xi$ to $B$ as well. In fact, there is a smooth map $\hat\xi\colon B\to\mathfrak{su}(2)$ which coincides with $\xi$ on $\Sigma=\partial B,$ and $\hat\psi=\omega-d\hat\xi$ is the closed 1-form extension of the harmonic 1-form 
$\psi.$

\begin{Lem}\label{lem:vol}
Let $f\colon\tilde\Sigma\to\mathfrak{su}(2)$ be a surface with translational periods contained in a lattice $\Gamma$ of rank 3, and  let
$d f=\psi+d\xi\in\Omega^1(\Sigma,\mathfrak{su}(2))$ be the Hodge decomposition of its differential.

Let $B$ be a compact 3-manifold with $\partial B=\Sigma,$ and
$\hat\psi\in\Omega^1(B,\mathfrak{su}(2))$ be an extension of $\psi$ as a closed 1-form with periods in $\Gamma$.
Then,  the enclosed volume of $f\colon\Sigma\to\mathfrak{su}(2)/\Gamma$ is
\begin{equation}\label{formula:vol}
\mathcal V(f)=-\tfrac{1}{12}\int_B\tr(\hat\psi\wedge\hat\psi\wedge \hat\psi)-\tfrac{1}{12}\int_\Sigma\tr(\xi d\xi\wedge d\xi)-\tfrac{1}{4}\int_\Sigma\tr(\xi\psi\wedge \psi)-\tfrac{1}{4}\int_\Sigma\tr(\psi \wedge d \xi \xi),\end{equation} 
and takes values in $\R\mod \vol(\mathfrak{su}(2)/\Gamma)$. \end{Lem}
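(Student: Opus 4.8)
The plan is to start from the definition of enclosed volume in Lemma~\ref{lem:evT3}, namely $\mathcal V(f)=\int_B\hat f^*\det$, and to rewrite the pullback of the volume form as a cubic trace expression in the extended differential $\omega:=d\hat f\in\Omega^1(B,\mathfrak{su}(2))$. For $a,b,c\in\mathfrak{su}(2)\cong\R^3$ the Euclidean determinant is $\det(a,b,c)=\langle a\times b,c\rangle=-\tfrac14\tr([a,b]c)$, using $[a,b]=2\,a\times b$ and $\langle\,\cdot\,,\cdot\,\rangle=-\tfrac12\tr$. Evaluating against an orthonormal basis, for which $\tr(e_ie_je_k)=-2\epsilon_{ijk}$, yields the pointwise identity $\hat f^*\det=-\tfrac{1}{12}\tr(\omega\wedge\omega\wedge\omega)$. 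Hence $\mathcal V(f)=-\tfrac{1}{12}\int_B\tr(\omega\wedge\omega\wedge\omega)$, and the entire lemma reduces to massaging this one integral.

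Next I would insert the extended Hodge decomposition $\omega=\hat\psi+d\hat\xi$, where $\hat\xi\colon B\to\mathfrak{su}(2)$ is a smooth extension of $\xi$ and $\hat\psi:=\omega-d\hat\xi$ is the resulting closed extension of $\psi$ with periods in $\Gamma$. Since all three factors are $1$-forms, the cyclic symmetry $\tr(\alpha\wedge\beta\wedge\gamma)=\tr(\beta\wedge\gamma\wedge\alpha)$ holds without sign, so the cube expands into four terms,
\[
\tr(\omega^{\wedge3})=\tr(\hat\psi^{\wedge3})+3\tr(\hat\psi\wedge\hat\psi\wedge d\hat\xi)+3\tr(\hat\psi\wedge d\hat\xi\wedge d\hat\xi)+\tr((d\hat\xi)^{\wedge3}).
\]
The first term is the bulk contribution $-\tfrac{1}{12}\int_B\tr(\hat\psi^{\wedge3})$; the remaining three, each carrying at least one factor $d\hat\xi$, I would convert into boundary integrals over $\Sigma=\partial B$.

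For this reduction I would exhibit explicit primitives and invoke Stokes, crucially using that both $\hat\psi$ and $d\hat\xi$ are closed: one verifies $d\tr(\hat\psi\wedge\hat\psi\,\hat\xi)=\tr(\hat\psi\wedge\hat\psi\wedge d\hat\xi)$, $d\tr(\hat\psi\,\hat\xi\,d\hat\xi)=-\tr(\hat\psi\wedge d\hat\xi\wedge d\hat\xi)$, and $d\tr(\hat\xi\,d\hat\xi\wedge d\hat\xi)=\tr((d\hat\xi)^{\wedge3})$, the only surviving contributions coming from differentiating the single $0$-form factor $\hat\xi$. Restricting the primitives to $\Sigma$, where $\hat\xi=\xi$, $d\hat\xi=d\xi$ and $\hat\psi=\psi$, and then using trace cyclicity together with the total antisymmetry of $\tr(e_ie_je_k)$ (which gives $\tr(\psi\,\xi\,d\xi)=-\tr(\psi\wedge d\xi\,\xi)$ and $\tr(\psi\wedge\psi\,\xi)=\tr(\xi\,\psi\wedge\psi)$), turns the three terms into exactly $-\tfrac14\int_\Sigma\tr(\xi\,\psi\wedge\psi)$, $-\tfrac14\int_\Sigma\tr(\psi\wedge d\xi\,\xi)$ and $-\tfrac{1}{12}\int_\Sigma\tr(\xi\,d\xi\wedge d\xi)$, which assemble with the bulk term into the claimed identity. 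Finally, the value lies in $\R\bmod\vol(\mathfrak{su}(2)/\Gamma)$ by Lemma~\ref{lem:evT3}: the three boundary terms are intrinsic to $\Sigma$, while changing the extension $(B,\hat\psi)$ alters the bulk term by $\int_M\tr(\hat\psi^{\wedge3})=-12\deg(F)\,\vol(\mathfrak{su}(2)/\Gamma)$ over the glued closed $3$-manifold $M$.

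I expect the main obstacle to be the sign bookkeeping in the graded, matrix-valued setting: tracking the graded Leibniz signs when verifying the three primitives, and the trace signs when commuting the $0$-form $\xi$ and the $1$-forms past one another to match the precise ordering of factors in the stated boundary integrands. Once the total antisymmetry $\tr(e_ie_je_k)=-2\epsilon_{ijk}$ is in hand these become mechanical, and everything else is a routine application of Stokes' theorem.
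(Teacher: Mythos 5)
Your proof is correct and follows essentially the same route as the paper's: the pointwise identity $\hat f^*\det=-\tfrac{1}{12}\tr(d\hat f\wedge d\hat f\wedge d\hat f)$, expansion of the cube via the decomposition $d\hat f=\hat\psi+d\hat\xi$, and Stokes' theorem converting every term containing $d\hat\xi$ into the stated boundary integrals, with the mod-$\vol$ statement reduced to Lemma~\ref{lem:evT3}. Your explicit primitives and the use of the total antisymmetry of $\tr$ on triples in $\mathfrak{su}(2)$ merely spell out what the paper compresses into ``using the cyclicity of the trace,'' so the two arguments coincide in substance.
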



\begin{proof}
Consider the map $\hat f\colon B\to\mathfrak{su}(2)/\Gamma$.
A direct computation gives that 
\[-\tr(d\hat f\wedge d\hat f\wedge d\hat f)
=12\hat f^*\vol.\]
On the other hand, decomposing $d\hat f=\hat\psi+d\hat\xi$,
where $\hat\xi\colon B\to\mathfrak{su}(2)$ is an arbitrary extension of $\xi$ to $B$,
we obtain
\begin{equation}
\begin{split}
d\hat f\wedge d\hat f\wedge d\hat f
=&\;\hat\psi\wedge\hat\psi\wedge \hat\psi
+\hat\psi\wedge d\hat \xi\wedge \hat\psi
+d\hat\xi\wedge \hat\psi\wedge \hat\psi
+d\hat\xi\wedge d\hat\xi\wedge \hat\psi\\
&+\hat\psi\wedge\hat\psi\wedge d\hat \xi
+\hat\psi\wedge d\hat \xi\wedge d\hat \xi
+d\hat\xi\wedge \hat\psi\wedge d\hat \xi
+d\hat\xi\wedge d\hat\xi\wedge d\hat \xi.
\end{split}
\end{equation}
Using the cyclicity of the trace, this yields
\begin{equation}
\begin{split}
-12\int_B\hat f^*\vol
=&\int_B\tr(\hat\psi\wedge\hat\psi\wedge \hat\psi)
+\int_\Sigma\tr(\xi\, d\xi\wedge d\xi)
+3\int_\Sigma\tr(\xi\psi\wedge \psi)
+3\int_\Sigma\tr(\psi \wedge d \xi\, \xi)
\end{split}
\end{equation}as claimed.
\end{proof}

Let $f$ be a CMC surface in $\mathfrak{su}(2)$ with translational periods contained in $\Gamma$.
Let
\[
\rho \colon H_1(\Sigma,\mathbb{Z}) \to \Gamma
\]
denote the period map of $df$, equivalently of its harmonic part $\psi$. In analogy with the case of CMC surfaces in $\mathbb{S}^3$ (see, for example,
\cite{Hi2,HHTS3}), we define
\[
\mathcal V_\Sigma(\rho)
:=
-\tfrac{1}{12}\int_B \tr(\hat\psi \wedge \hat\psi \wedge \hat\psi)
\]
to be the Wess--Zumino--Witten term of $\rho$ with respect to the Riemann surface
$\Sigma$.
As for the enclosed volume $\mathcal V(f)$, the quantity $\mathcal V_\Sigma(\rho)$
is well defined only up to an integer multiple of
$\vol(\mathfrak{su}(2)/\Gamma)$; however, their difference
$\mathcal V(f)-\mathcal V_\Sigma(\rho)$ is a well-defined real number.

As before, let $\partial B = \Sigma$ and assume that $df$ is the restriction of a
closed $1$-form $\omega \in \Omega^1(B,\mathfrak{su}(2))$.
Suppose further that the image of the period map $\rho$ is contained in a lattice
$\underline{\Gamma}$ of rank~$2$.
Then the component of $\psi$ orthogonal to $\underline{\Gamma}$ vanishes. Without loss of generality, we choose an extension
$\hat{\psi} \in \Omega^1(B,\mathbb{R}\,\underline{\Gamma})
\subset \Omega^1(B,\mathfrak{su}(2))$
taking values in the two-dimensional subspace spanned by
$\underline{\Gamma}$.
We will always make this choice whenever the period lattice has rank at most~$2$.

\begin{Cor}\label{Cor:planarvol}
With the same notation as above, assume that the lattice
$\underline{\Gamma} \subset \Gamma$ spanned by the periods of $\tilde f$ has
rank at most~$2$.
Then the enclosed volume of
$f \colon \Sigma \to \mathfrak{su}(2)/\underline{\Gamma}$
is well defined in $\mathbb{R}$ and satisfies
\[
\mathcal V(f)
=
-\tfrac{1}{12}\int_\Sigma \tr(\xi\, d\xi \wedge d\xi)
-\tfrac{1}{4}\int_\Sigma \tr(\xi\, \psi \wedge \psi)
-\tfrac{1}{4}\int_\Sigma \tr(\psi \wedge d\xi\, \xi)
\in \mathbb{R}.
\]
\end{Cor}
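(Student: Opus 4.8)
The plan is to deduce the corollary directly from Lemma~\ref{lem:vol}, whose formula~\eqref{formula:vol} already contains the three boundary integrals that appear in the statement together with one extra bulk term, namely the Wess--Zumino--Witten contribution
\[
\mathcal V_\Sigma(\rho)=-\tfrac{1}{12}\int_B\tr(\hat\psi\wedge\hat\psi\wedge \hat\psi).
\]
It therefore suffices to show that under the rank~$\le 2$ hypothesis this term vanishes identically; the remaining three summands are then exactly those claimed, and no recomputation of the boundary integrals is needed.

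First I would invoke the choice of extension fixed in the paragraph preceding the corollary: since the periods of $\psi$ lie in $\underline{\Gamma}$, the harmonic form $\psi$ is valued in the two-dimensional subspace $\mathbb{R}\,\underline{\Gamma}\subset\mathfrak{su}(2)$, and we extend it to a closed form $\hat\psi\in\Omega^1(B,\mathbb{R}\,\underline{\Gamma})$ taking values in that same plane. This is the one and only place where the rank assumption is used.

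The key step is then purely algebraic. Fixing a basis $e_1,e_2$ of $\mathbb{R}\,\underline{\Gamma}$, I would write $\hat\psi=\alpha_1 e_1+\alpha_2 e_2$ with scalar one-forms $\alpha_1,\alpha_2\in\Omega^1(B)$ and expand
\[
\tr(\hat\psi\wedge\hat\psi\wedge\hat\psi)
=\sum_{j,k,l\in\{1,2\}}(\alpha_j\wedge\alpha_k\wedge\alpha_l)\,\tr(e_je_ke_l).
\]
Each three-form $\alpha_j\wedge\alpha_k\wedge\alpha_l$ is a wedge of three one-forms drawn from only two distinct factors, so by the pigeonhole principle at least two of the indices coincide; since $\alpha\wedge\alpha=0$ for any one-form, every summand vanishes. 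Hence $\tr(\hat\psi\wedge\hat\psi\wedge\hat\psi)\equiv 0$ as a three-form on $B$, and consequently $\mathcal V_\Sigma(\rho)=0$.

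Finally I would substitute $\mathcal V_\Sigma(\rho)=0$ into \eqref{formula:vol}, which leaves precisely the three stated boundary terms, and record that $\mathcal V(f)$ is a genuine real number rather than a class modulo $\vol(\mathfrak{su}(2)/\Gamma)$ by citing the second part of Lemma~\ref{lem:evT3}. I do not expect a real obstacle here: the entire content is the vanishing of the WZW three-form, which is the impossibility of an antisymmetric triple wedge in a two-dimensional target. The only point requiring care is to make sure the orientation and factor conventions match those already fixed in Lemma~\ref{lem:vol}, so that the surviving terms are reproduced with the correct coefficients.
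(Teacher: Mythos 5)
Your proposal is correct and matches the paper's own proof: both reduce to Lemma~\ref{lem:vol} after observing that the extension $\hat\psi$, chosen to take values in the two-dimensional subspace $\mathbb{R}\,\underline{\Gamma}$, satisfies $\tr(\hat\psi\wedge\hat\psi\wedge\hat\psi)=0$. Your basis expansion and pigeonhole argument simply spell out the algebra that the paper leaves implicit, and your appeal to Lemma~\ref{lem:evT3} for well-definedness in $\mathbb{R}$ is likewise consistent with the paper's setup.
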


\begin{proof}
Since the extension
$\hat\psi \in \Omega^1(B,\mathfrak{su}(2))$
takes values in a two-dimensional linear subspace, we have
\[
\tr(\hat\psi \wedge \hat\psi \wedge \hat\psi) = 0.
\]
The result follows from Lemma~\ref{lem:vol}.
\end{proof}

\section{Gauge theoretic derivation of the Minkowski formula}\label{sec:Kenclosed}

For closed CMC surfaces in $\mathbb{R}^3$, the classical Minkowski identity
\[
\mathcal A(f)=3H\,\mathcal V(f)
\]
relates area and enclosed volume in a simple and rigid way.
For periodic CMC surfaces, the enclosed volume requires additional global input.

The gauge-theoretic description of CMC surfaces (as summarized in
Section~\ref{subs:cmcr3}) provides a natural framework for formulating such a
relation. Any CMC immersion gives rise to a family of flat connections
\[
\lambda \longmapsto \nabla^\lambda,
\]
unitary on $\S^1$ and trivial at the Sym point $\lambda=1$.
Many geometric quantities of the immersion -- such as the area and the Hopf
differential  -- admit gauge-invariant expressions in terms of this family.
The enclosed volume, however, is governed by higher-order information of the
associated family and cannot be recovered from first-order data alone.

From the viewpoint of the moduli space of flat connections, the path
$\tau \mapsto \nabla^{e^{i\tau}}$
traces a curve emanating from the trivial connection. When the immersion is compact, this curve is constant at first order, which is precisely what leads to the classical Minkowski identity. For periodic surfaces, first-order triviality fails, and the second-order behaviour of the curve encodes additional global information. This second-order information is captured by a natural scalar quantity:
$$K \;=\; \frac{i}{2\pi}\int_\Sigma \mathrm{tr}\big(D' \wedge D''\big),$$
where $D'$ and $D''$ are respectively the first and second derivatives (in a harmonic gauge) of the associated family at $\lambda=1.$ The integrand is reminiscent of the curvature term appearing in the
Chern--Simons description of the moduli space of flat connections; indeed,
$K$ can be interpreted as the infinitesimal holonomy of the Chern--Simons
connection along the loop traced out by $\nabla^\lambda$.
Crucially, $K$ is gauge invariant and depends only on the gauge class of the
family $\nabla^\lambda$, and not on the choice of immersion (in the presence of
non-trivial isospectral deformations).

The key observation is that the classical Minkowski formula admits a natural
extension once this curvature term is taken into account.
The area, the enclosed volume, and a Wess--Zumino--Witten--type term associated with
the harmonic part of $df$, together with the curvature $K$, satisfy a linear
relation which reduces to the classical Minkowski identity in the closed case.
This provides a fully gauge-theoretic description of the enclosed
volume for CMC surfaces with translational periods.

\subsection{Associated families and harmonic gauges}\label{sec:curvature_prelim}

Let $\mathrm{G}\in\{\mathrm{SU}(2),\mathrm{SL}(2,\C)\},$ and $\mathfrak{g}$ be its Lie algebra.
We consider smooth families of flat $\mathrm{G}$-connections 
\[\tau\longmapsto\nabla^\tau\]
depending on some real parameter $\tau\in(-\epsilon,\epsilon)$ with $\nabla^{\tau=0}=d.$

\begin{Lem}\label{lem:exharmonicgauge}
There exists a smooth family of gauge transformations
$\tau \mapsto g(\tau) \in \Gamma(\Sigma,\mathrm{G})$
such that
\[
\left.\frac{d}{d\tau}\right|_{\tau=0} (\nabla^\tau . g(\tau))
\in \Omega^1(\Sigma,\mathfrak{g})
\]
is a harmonic $\mathfrak{g}$-valued $1$-form.
We call $g$ a \emph{harmonic gauge}.
\end{Lem}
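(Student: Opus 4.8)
The plan is to gauge-fix at first order and reduce the harmonicity requirement to a single scalar Poisson equation. Write $\nabla^\tau = d + A^\tau$ with $A^\tau \in \Omega^1(\Sigma,\mathfrak{g})$ and $A^0 = 0$ (since $\nabla^{\tau=0}=d$), and set $\dot A := \frac{d}{d\tau}\big|_{\tau=0}A^\tau$. For a smooth family $g(\tau)$ with $g(0)=\mathrm{Id}$ and infinitesimal generator $\chi := \frac{d}{d\tau}\big|_{\tau=0}g(\tau)\in\Gamma(\Sigma,\mathfrak{g})$, the gauged connection is $\nabla^\tau.g(\tau) = d + \Ad_{g(\tau)^{-1}}A^\tau + g(\tau)^{-1}dg(\tau)$. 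Differentiating at $\tau=0$, and using that $A^0=0$ and $g(0)=\mathrm{Id}$ annihilate all cross terms at this order, I obtain
\[
\frac{d}{d\tau}\Big|_{\tau=0}\bigl(\nabla^\tau.g(\tau)\bigr) = \dot A + d\chi.
\]
Thus the task is to choose $\chi$ so that $\dot A + d\chi$ is harmonic with respect to $d=\nabla^0$, i.e.\ simultaneously closed and coclosed.

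The key observation is that closedness is automatic. Differentiating the flatness identity $F^{\nabla^\tau} = dA^\tau + \tfrac12[A^\tau\wedge A^\tau] = 0$ at $\tau=0$ and again invoking $A^0=0$ gives $d\dot A = 0$. Hence $d(\dot A + d\chi)=0$ for \emph{every} choice of $\chi$, and the harmonicity condition collapses to the single requirement $d^*(\dot A + d\chi)=0$, that is, the Poisson equation
\[
\Delta\chi = -\,d^*\dot A
\]
for the $\mathfrak{g}$-valued Hodge Laplacian $\Delta = d^*d$ on functions (with respect to a fixed inner product on $\mathfrak{g}$ and the conformal metric on $\Sigma$).

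It then remains to solve this equation. By Hodge theory on the compact Riemann surface $\Sigma$ with values in the trivial bundle $\Sigma\times\mathfrak{g}$, the equation $\Delta\chi=\eta$ admits a (smooth, by elliptic regularity) solution precisely when $\eta$ is $L^2$-orthogonal to $\ker\Delta$, which on a connected surface consists of the $\mathfrak{g}$-valued constants. This orthogonality holds automatically: for any constant $c\in\mathfrak{g}$,
\[
\langle c,\, d^*\dot A\rangle_{L^2} = \langle dc,\, \dot A\rangle_{L^2} = 0,
\]
so $-d^*\dot A$ lies in the image of $\Delta$ and a solution $\chi$ exists. Finally, setting $g(\tau):=\exp(\tau\chi)$ produces a smooth family of gauge transformations with $g(0)=\mathrm{Id}$ whose first-order variation equals $\dot A + d\chi$, harmonic by construction; this is the desired harmonic gauge.

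I expect no serious obstacle. The only structural input is the flatness-forced identity $d\dot A=0$, which is exactly what reduces the two harmonicity conditions to one solvable scalar equation; everything else is routine elliptic theory, and the solvability obstruction vanishes by the one-line integration by parts against constants. The mildest care is needed in fixing conventions for the gauge action and for the inner product on $\mathfrak{g}$ (so that $d$ and $d^*$ are genuinely adjoint), but these do not affect the argument.
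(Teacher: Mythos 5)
Your proof is correct and follows essentially the same route as the paper: both exploit that flatness forces the first-order term $\dot A$ to be closed, and then gauge by the exponential of a potential function that removes its exact part. The only cosmetic difference is that the paper invokes the Hodge decomposition of the closed $1$-form $\dot A = \psi + d\xi$ directly and gauges by $\exp(-\tau\xi)$, whereas you re-derive that decomposition by solving the Poisson equation $\Delta\chi = -d^{*}\dot A$; your $\chi$ is precisely the paper's $-\xi$ (up to a constant), so the resulting harmonic gauges coincide.
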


\begin{proof}
Expand $\nabla^\tau = d + \tau \phi + O(\tau^2)$.
Since $\nabla^\tau$ is flat for all $\tau$, the $1$-form $\phi$ is closed, i.e.,
$d\phi = 0$.
Decompose
\[
\phi = \psi + d\xi
\]
into its harmonic part $\psi$ and its exact part $d\xi$, for some well-defined
map $\xi \colon \Sigma \to \mathfrak{g}$.
Define
\[
g(\tau) := \exp(-\tau \xi).
\]
Then
\[
\nabla^\tau . g(\tau)
= d + \tau \phi - \tau d\xi + O(\tau^2)
= d + \tau \psi + O(\tau^2),
\]
which has a harmonic derivative at $\tau = 0$.
\end{proof}

\subsection{Definition and invariance of the curvature $K$}\label{sec:curvature}
The following observation is crucial for computing the enclosed volume of
periodic CMC surfaces in $\mathbb{R}^3$.
\begin{The}\label{thm:defK}
Consider a family of flat connections $\tau \mapsto \nabla^\tau$, and let
$\tau \mapsto g(\tau)$ be a harmonic gauge as in
Lemma~\ref{lem:exharmonicgauge}.
Set
\[
D(\tau) := \nabla^\tau . g(\tau), \qquad
D' := \left.\frac{d}{d\tau}\right|_{\tau=0} D(\tau), \qquad
D'' := \left.\frac{d^2}{d\tau^2}\right|_{\tau=0} D(\tau).
\]
Then
\[
K := \tfrac{i}{2\pi} \int_\Sigma \tr(D' \wedge D'')
\]
is independent of the choice of harmonic gauge $g$.
We call $K$ the curvature of the family $\nabla^\tau$ at $\tau = 0$.

If $\tau \mapsto \tilde{\nabla}^\tau$ is another family of flat connections with
$\tilde{\nabla}^{\tau=0}=d$ that is gauge equivalent to $\nabla^\tau$, i.e.,
\[
\tilde{\nabla}^\tau = \nabla^\tau \cdot h(\tau)
\]
for some smooth family of gauge transformations
$\tau \mapsto h(\tau)$, then the corresponding curvature invariant
$\tilde K$ coincides with $K$.
\end{The}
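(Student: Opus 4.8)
The plan is to reduce the whole statement to the first assertion — that $K$ does not depend on the harmonic gauge — and then deduce the gauge-equivalence invariance from it almost for free. Throughout, write $D(\tau)=d+B(\tau)$ with $B(\tau)=\tau B_1+\tau^2 B_2+O(\tau^3)$, so that $D'=B_1$, $D''=2B_2$ and $K=\tfrac{i}{\pi}\int_\Sigma\tr(B_1\wedge B_2)$. The two facts I will lean on come from flatness of $D(\tau)$ for all $\tau$: expanding $F^{D(\tau)}=dB+\tfrac12[B\wedge B]=0$ order by order gives $dB_1=0$ at first order and $dB_2=-\tfrac12[B_1\wedge B_1]$ at second order. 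The harmonic-gauge condition adds that $B_1$ is harmonic.

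For the independence from the harmonic gauge, let $g,\tilde g$ be two harmonic gauges; after right-multiplying by constants (which is conjugation, hence preserves both harmonicity and, by cyclicity of the trace, the value of $K$) I may assume $g(0)=\tilde g(0)=\Id$. Put $k(\tau)=g(\tau)^{-1}\tilde g(\tau)$, so $k(0)=\Id$ and $\tilde D=D\cdot k$, i.e.\ $\tilde B=k^{-1}dk+k^{-1}Bk$. Writing $k=\Id+\tau k_1+\tau^2 k_2+O(\tau^3)$ and expanding yields $\tilde B_1=B_1+dk_1$ and $\tilde B_2=B_2+dk_2+[B_1,k_1]$ (using $dk_1=0$, established next). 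Since $B_1$ and $\tilde B_1$ are both harmonic, their difference $dk_1$ is simultaneously exact and harmonic, hence $dk_1=0$ on the closed surface $\Sigma$; in particular $\tilde B_1=B_1$ and $k_1$ is constant.

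It then remains to show that $\tilde K-K=\tfrac{i}{\pi}\int_\Sigma\tr\!\big(B_1\wedge(dk_2+[B_1,k_1])\big)$ vanishes. The $dk_2$-term integrates to zero by Stokes together with $dB_1=0$: indeed $d\,\tr(k_2 B_1)=\tr(dk_2\wedge B_1)+\tr(k_2\,dB_1)$ integrates to $0$, and $\tr(B_1\wedge dk_2)=-\tr(dk_2\wedge B_1)$. For the $[B_1,k_1]$-term I use ad-invariance of the trace to rewrite $\tr(B_1\wedge[B_1,k_1])=\tr([B_1\wedge B_1]\,k_1)$, and here the second-order flatness relation is decisive: $[B_1\wedge B_1]=-2\,dB_2$ is exact, so with $k_1$ constant $\tr([B_1\wedge B_1]\,k_1)=-2\,d\,\tr(B_2 k_1)$ integrates to zero. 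I expect this last step to be the main obstacle, precisely because for a merely closed $\mathfrak g$-valued $1$-form $B_1$ the integral $\int_\Sigma[B_1\wedge B_1]$ need not vanish on a higher-genus surface; it is the flatness of the family to \emph{second} order, not just the harmonicity and closedness of $B_1$, that forces the cancellation.

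For the invariance under gauge equivalence of the family, suppose $\tilde\nabla^\tau=\nabla^\tau\cdot h(\tau)$ with $h(0)$ constant (forced by $\tilde\nabla^0=\nabla^0=d$). Given a harmonic gauge $g$ for $\nabla^\tau$ with $D=\nabla^\tau\cdot g$, set $\tilde g(\tau)=h(\tau)^{-1}g(\tau)$. Then $\tilde\nabla^\tau\cdot\tilde g=(\nabla^\tau\cdot h)\cdot(h^{-1}g)=\nabla^\tau\cdot g=D$, so $\tilde g$ is itself a harmonic gauge for $\tilde\nabla^\tau$ (its first derivative equals the harmonic $D'$) and it produces literally the same $D(\tau)$. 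Hence the invariant computed from $\tilde\nabla^\tau$ with the gauge $\tilde g$ equals $K$, and by the first part this value is independent of the chosen harmonic gauge, giving $\tilde K=K$.
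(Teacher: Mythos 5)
Your proof is correct and follows essentially the same route as the paper's: reduce to comparing two harmonic gauges with $g(0)=\tilde g(0)=\Id$ (conjugation invariance of the trace), deduce that the first-order gauge parameter $k_1$ is constant because $dk_1$ is exact and harmonic, kill the $dk_2$-term by Stokes using $dB_1=0$, and kill the commutator term via ad-invariance together with the second-order flatness relation $dB_2=-\tfrac12[B_1\wedge B_1]$, exactly as the paper does with $\tr(\psi\wedge[\psi,\xi_1])=2\tr(\psi\wedge\psi\,C)=-2\,d\tr(\phi\,C)$. The only (harmless) difference is that you spell out the deduction of the second statement — taking $\tilde g=h^{-1}g$ so that $\tilde\nabla^\tau\cdot\tilde g=\nabla^\tau\cdot g=D$ — which the paper compresses into ``the second statement follows from the first.''
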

\begin{proof}
Assume that $\nabla^\tau = d + \tau \psi + \tau^2 \phi + O(\tau^3)$ has harmonic
derivative $\psi$ at $\tau = 0$, and let $\tau \mapsto g(\tau)$ be a gauge such
that $\nabla^\tau . g(\tau)$ still has harmonic derivative.
By the standard properties of the trace (in particular, invariance under
conjugation), we may assume without loss of generality that $g(0)=\Id$. Consider the expansion
\[
g(\tau) = \Id + \xi_1 \tau + \xi_2 \tau^2 + \dots .
\]
Expanding $\nabla^\tau . g(\tau)$ in $\tau$ gives
\[
\nabla^\tau . g(\tau)
=
d + \tau(\psi + d\xi_1)
+ \tau^2(\phi + d\xi_2 + [\psi,\xi_1] - \xi_1 d\xi_1)
+ O(\tau^3).
\]
Since $\psi$ is harmonic and $d\xi_1$ is orthogonal to the space of harmonic
$1$-forms, we have $d\xi_1=0$, and hence $\xi_1 = C$ is constant.
Moreover, expanding the flatness condition of $\nabla^\tau$ in $\tau$ yields
\[
d\psi = 0
\qquad \text{and} \qquad
d\phi + \psi \wedge \psi = 0.
\]
Therefore,
\[
\tr(\psi \wedge [\psi,\xi_1])
= 2\,\tr(\psi \wedge \psi\, C)
= -2\, d\,\tr(\phi\, C),
\]
and the first claim follows from
\[
\int_\Sigma \tr\bigl(\psi \wedge (d\xi_2 + [\psi,\xi_1])\bigr)
=
-\int_\Sigma d\,\tr(\psi\, \xi_2 + 2\phi\, C)
= 0.
\]
The second statement follows from the first.
\end{proof}

The motivation for the definition of $K$ stems from the following observations.
The moduli space $\mathcal M^{\mathrm{irr}}_{SU(2)}(\Sigma)$ of irreducible flat
$\mathrm{SU}(2)$-connections on a compact Riemann surface $\Sigma$ carries a
natural Riemannian metric defined as follows.
The tangent space to the space of flat unitary connections at a connection
$\nabla$ is given by the (infinite-dimensional) space of
$d^\nabla$-closed $\mathfrak{su}(2)$-valued $1$-forms $\xi$.
The non-degenerate inner product
\[
\langle \xi,\mu \rangle
:=
\tfrac{1}{2\pi}\int_\Sigma \tr(\xi \wedge *\mu)
\]
is invariant under the action of the gauge group, which acts by conjugation on
tangent vectors.
The vertical subspace is given by the image of
\[
d^\nabla \colon \Omega^0(\Sigma,\mathfrak{su}(2))
\to \Omega^1(\Sigma,\mathfrak{su}(2)),
\]
while its finite-dimensional orthogonal complement is the horizontal space
\[
\mathcal H_\nabla
=
\bigl\{
\xi \in \Omega^1(\Sigma,\mathfrak{su}(2))
\mid d^\nabla \xi = 0 = d^\nabla *\xi
\bigr\}
\]
of harmonic $1$-forms.
The unitary gauge group maps horizontal spaces isometrically to one another.
Consequently, the horizontal space identifies with the tangent space
$T_{[\nabla]}\mathcal M^{\mathrm{irr}}_{SU(2)}(\Sigma)$, which is thereby equipped
with a scalar product.
This endows $\mathcal M^{\mathrm{irr}}_{SU(2)}(\Sigma)$ with the structure of a
Riemannian manifold; in fact, it is K\"ahler, with integrable complex structure
$J=-*$ on $\mathcal H_\nabla$.
The corresponding (imaginary-valued) K\"ahler form is the Atiyah--Bott symplectic
form $\Omega$, which is the curvature of the natural connection $\mathcal D$ on
the Chern--Simons line bundle
$\mathcal L \to \mathcal M^{\mathrm{irr}}_{SU(2)}(\Sigma)$. Note that the natural projection from the space of flat irreducible connections
to the moduli space
$\mathcal M^{\mathrm{irr}}_{SU(2)}(\Sigma)$
is a Riemannian submersion.
Let $\gamma$ be a curve in the moduli space, and let $\hat\gamma$ be a lift of
$\gamma$ to the space of flat unitary connections that is horizontal at
$\tau_0$.
Denote by $D$ the Levi--Civita connection on
$\mathcal M^{\mathrm{irr}}_{SU(2)}(\Sigma)$.

\begin{Pro}\label{pro:KcurvatureLevi}
With the above notation, the following equality holds at $\tau_0$:
\[
\Omega\bigl(\gamma' \wedge \nabla_{\gamma'}\gamma'\bigr)
=
\tfrac{i}{2\pi}\int_\Sigma \tr(\hat\gamma' \wedge \hat\gamma'').
\]
\end{Pro}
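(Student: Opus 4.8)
The plan is to unwind both sides of the claimed identity into the same integral expression, working in a horizontal lift and using the compatibility of the Levi--Civita connection $D$ with the Riemannian submersion structure. The statement relates an intrinsic Riemannian quantity on $\mathcal M^{\mathrm{irr}}_{SU(2)}(\Sigma)$---namely $\Omega(\gamma' \wedge \nabla_{\gamma'}\gamma')$, the symplectic form evaluated on the velocity and acceleration of a curve---to the integral $\tfrac{i}{2\pi}\int_\Sigma \tr(\hat\gamma' \wedge \hat\gamma'')$ computed upstairs on the space of flat connections. Since the Atiyah--Bott form $\Omega$ is (up to the normalizing constant) given by $\Omega(\xi \wedge \mu) = \tfrac{i}{2\pi}\int_\Sigma \tr(\xi \wedge \mu)$ on horizontal tangent vectors, the left-hand side is already an integral of this shape; the content of the proposition is that the horizontal lift of the covariant derivative $\nabla_{\gamma'}\gamma'$ can be replaced by the plain second derivative $\hat\gamma''$ of the lift without changing the pairing against $\hat\gamma'$.

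\emph{First} I would fix notation: let $\hat\gamma$ be the horizontal lift that is horizontal precisely at $\tau_0$, so $\hat\gamma'(\tau_0) \in \mathcal H_{\nabla}$ is harmonic. The second derivative $\hat\gamma''(\tau_0)$ decomposes orthogonally into a horizontal (harmonic) part and a vertical part lying in the image of $d^\nabla$. By the theory of Riemannian submersions, the horizontal projection of $\hat\gamma''$ is exactly the horizontal lift of the Levi--Civita acceleration $\nabla_{\gamma'}\gamma'$ (this uses that $\hat\gamma$ is horizontal at $\tau_0$, so there is no O'Neill correction term contributing at that point---the second fundamental form and the $A$-tensor corrections vanish because the relevant velocity is horizontal and we evaluate at the point where the lift is horizontal). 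Thus I can write
\[
\hat\gamma''(\tau_0) = \widehat{\nabla_{\gamma'}\gamma'} + d^\nabla \eta
\]
for some $\eta \in \Omega^0(\Sigma,\mathfrak{su}(2))$, where the first term is horizontal.

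\emph{Next}, I would substitute this decomposition into $\tfrac{i}{2\pi}\int_\Sigma \tr(\hat\gamma' \wedge \hat\gamma'')$. The horizontal part pairs against $\hat\gamma'$ to give exactly $\Omega(\gamma' \wedge \nabla_{\gamma'}\gamma')$, because $\Omega$ is computed on horizontal vectors by the same integral formula and the submersion is Riemannian. It remains to show the vertical contribution $\tfrac{i}{2\pi}\int_\Sigma \tr(\hat\gamma' \wedge d^\nabla \eta)$ vanishes. Here I would integrate by parts: since $\hat\gamma'(\tau_0)$ is harmonic, in particular $d^\nabla \hat\gamma' = 0$, so $\tr(\hat\gamma' \wedge d^\nabla\eta) = d\,\tr(\hat\gamma'\,\eta) \mp \tr(d^\nabla\hat\gamma' \wedge \eta) = d\,\tr(\hat\gamma'\,\eta)$ is exact, and Stokes' theorem on the closed surface $\Sigma$ kills it. This is structurally the same cancellation that appears in the proof of Theorem~\ref{thm:defK}, where the vertical (exact) ambiguity in the gauge drops out against a harmonic $1$-form.

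\emph{The main obstacle} I anticipate is the careful justification of the O'Neill-type formula for the acceleration: verifying that the horizontal projection of $\hat\gamma''$ genuinely equals the lift of $\nabla_{\gamma'}\gamma'$ at the distinguished point $\tau_0$, rather than differing by a tensor correction. This requires that the connection inherited on the moduli space via the submersion agrees with its Levi--Civita connection, which in turn rests on the gauge group acting by isometries on horizontal spaces (stated in the preceding discussion) and on the fact that horizontality is imposed exactly at $\tau_0$, so that the terms involving the derivative of the horizontality condition do not yet contribute. Once this identification of accelerations is in hand, the remainder is the routine integration-by-parts cancellation above.
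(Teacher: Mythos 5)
Your decomposition of $\hat\gamma''(\tau_0)$ is incomplete, and this is the genuine gap in the argument. You write $\hat\gamma''(\tau_0) = \widehat{\nabla_{\gamma'}\gamma'} + d^\nabla\eta$, i.e.\ horizontal plus vertical, and you justify discarding further corrections by asserting that the second fundamental form term vanishes because the lift is horizontal at $\tau_0$. But the second-fundamental-form correction does \emph{not} vanish. The curve $\hat\gamma$ lies in the space of flat connections, which is a curved submanifold of the affine space of all unitary connections, while $\hat\gamma''$ is the plain second derivative computed in that ambient affine space. Differentiating the flatness condition $F(\hat\gamma(\tau))=0$ twice gives
\[
d^\nabla\hat\gamma'' + [\hat\gamma'\wedge\hat\gamma'] = 0 ,
\]
and $[\hat\gamma'\wedge\hat\gamma']$ is generically nonzero (for instance $\hat\gamma'=\mu\,dx+\nu\,dy$ with $[\mu,\nu]\neq 0$); horizontality of $\hat\gamma'$ at $\tau_0$ does nothing to kill it. Hence $\hat\gamma''$ is not $d^\nabla$-closed and cannot be written as harmonic plus exact: its Hodge decomposition has a third, $d^\nabla$-coexact component, normal to the space of flat connections. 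This is precisely why the paper's proof decomposes $\hat\gamma'' = h\oplus x\oplus n$ into \emph{three} pieces (harmonic, exact, and coexact) rather than two.

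As a result, your Stokes argument, which only treats $\int_\Sigma\tr(\hat\gamma'\wedge d^\nabla\eta)$, leaves the pairing $\int_\Sigma\tr(\hat\gamma'\wedge n)$ unaccounted for. The gap is fixable, because this term also vanishes, but by a different application of Stokes: writing the coexact part as $n = *\,d^\nabla\alpha$ for a function $\alpha$, one has
\[
\int_\Sigma\tr\bigl(\hat\gamma'\wedge *\,d^\nabla\alpha\bigr)
= \int_\Sigma\tr\bigl(d^\nabla\alpha\wedge *\hat\gamma'\bigr)
= \int_\Sigma d\,\tr\bigl(\alpha\,{*\hat\gamma'}\bigr) = 0,
\]
where now it is the \emph{coclosedness} $d^\nabla{*\hat\gamma'}=0$ of the harmonic form $\hat\gamma'$ that is used, not its closedness. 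Your identification of the harmonic part of $\hat\gamma''$ with the lift of $\nabla_{\gamma'}\gamma'$ is correct, although the clean way to see it is in two steps, as the paper does via the reference to Lang: the tangential (closed) projection of $\hat\gamma''$ is the Levi--Civita acceleration within the submanifold of flat connections, and its horizontal (harmonic) part is then the lift of the acceleration in the moduli space via the Riemannian submersion. Until the normal contribution is shown to drop out of the symplectic pairing, however, the proof is not complete.
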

\begin{proof}
The curve $\hat\gamma$ maps into the submanifold of flat irreducible unitary connections inside the affine space of unitary connections. We decompose \[\hat\gamma''=h\oplus x\oplus  n,\]
where $n$ is in the normal bundle (and $d^\nabla$-coexact), $x$ is vertical (and $d^\nabla$-exact) and $h$ is horizontal (i.e. $d^\nabla$-harmonic). As in finite dimensions, see for example \cite[XIV, \textsection 1 and \textsection 3]{Lang}, 
we obtain $h=D_{\gamma'}\gamma'$ after identification of the horizontal space with the tangent space of the moduli space. Because $\hat\gamma'$ is horizontal (at $\tau_0$) we obtain by Stokes' Theorem
$\int\tr(\hat\gamma'\wedge x)=0=\int\tr(\hat\gamma'\wedge n)$ (at $\tau_0$). The result follows.
\end{proof}

Assume that for all sufficiently small $\tau$ there exists a unique sufficiently short
geodesic from the gauge class of $\nabla^\tau$ to the trivial gauge class in the
moduli space of flat $\mathrm{SU}(2)$-connections.
Following the family $\tau \mapsto \nabla^\tau$ and then returning along these
geodesics to the trivial gauge class yields, by abuse of notation, a family of
closed curves in the moduli space of flat unitary connections, parametrized by
$\tau$.
Expanding the holonomy of the natural connection $\mathcal D$ on the
Chern--Simons line bundle
$\mathcal L \to \mathcal M$
in $\tau$ then produces, at cubic order, a term proportional to $K$.

\begin{Rem}
The appearance of the curvature term $K$ is closely analogous to the situation
for minimal and CMC surfaces in $S^{3}$.
In that setting, the moduli space of flat $\mathrm{SU}(2)$-connections carries
the Chern--Simons line bundle, whose natural connection $\mathcal D$ has curvature
equal to the Atiyah--Bott symplectic form.
The holonomy of this connection along the loop defined by the associated family
then encodes the enclosed volume of the surface; see \cite{Hi2} for the minimal
surface case and \cite{HHTS3} for CMC surfaces.
\end{Rem}

\subsection{The proof of the generalized Minkowski formula}\label{ssec:minkowski}

Before deriving the main enclosed volume formula, we establish the following
integral identity.

\begin{Lem}\label{lem:cmcm1id}
Let $\Gamma < \mathfrak{su}(2)$ be a lattice, and let
$f$ be a compact CMC surface in $\mathfrak{su}(2)/\Gamma$
with mean curvature $H=1$.
Let
\[
df = \psi + d\xi
\]
be the Hodge decomposition of its differential.
Then
\[
\int_\Sigma \tr(\psi \wedge *\psi)
=
4\,\mathcal A(f)
+
\int_\Sigma \tr(\xi\, df \wedge df),
\]
where $\mathcal A(f)$ denotes the area of $f$.
\end{Lem}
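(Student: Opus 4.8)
The plan is to exploit the $L^2$-orthogonality of the Hodge decomposition $df=\psi+d\xi$ together with the two defining features of a CMC $1$ surface: the area formula of Remark~\ref{rem:area-formula} and the mean curvature equation $d*df = df\wedge df$ established in the proof of Proposition~\ref{Prop:family_from_f}. The identity will follow by comparing three integrals, namely the self-pairing $\int_\Sigma\tr(\psi\wedge*\psi)$, the full energy $\int_\Sigma\tr(df\wedge*df)$, and the exact-part energy $\int_\Sigma\tr(d\xi\wedge*d\xi)$.

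First I would record the Pythagorean identity $\int_\Sigma\tr(df\wedge*df)=\int_\Sigma\tr(\psi\wedge*\psi)+\int_\Sigma\tr(d\xi\wedge*d\xi)$. This reduces to the vanishing of the two cross terms; for instance $\int_\Sigma\tr(d\xi\wedge*\psi)=\int_\Sigma d\,\tr(\xi *\psi)-\int_\Sigma\tr(\xi\,d*\psi)=0$, since $\psi$ is harmonic (so $d*\psi=0$) and $\Sigma$ is closed, and the other cross term vanishes by the symmetry of the pairing $\langle\cdot,\cdot\rangle$.

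Next I would evaluate the two remaining energies. For the full energy, substituting $df=-2i(\Phi+\Phi^*)$ and $*df=2(\Phi-\Phi^*)$, using $\Phi\wedge\Phi=\Phi^*\wedge\Phi^*=0$ together with $\tr(\Phi^*\wedge\Phi)=-\tr(\Phi\wedge\Phi^*)$, collapses $\tr(df\wedge*df)$ to $8i\,\tr(\Phi\wedge\Phi^*)$; the area formula $\mathcal A(f)=2i\int_\Sigma\tr(\Phi\wedge\Phi^*)$ then yields $\int_\Sigma\tr(df\wedge*df)=4\,\mathcal A(f)$. For the exact-part energy, the CMC condition $d*df=df\wedge df$ combined with $d*\psi=0$ gives $d*d\xi=df\wedge df$; integrating the Leibniz identity $d\,\tr(\xi *d\xi)=\tr(d\xi\wedge*d\xi)+\tr(\xi\,d*d\xi)$ over the closed surface $\Sigma$ then produces $\int_\Sigma\tr(\xi\,df\wedge df)=\int_\Sigma\tr(\xi\,d*d\xi)=-\int_\Sigma\tr(d\xi\wedge*d\xi)$.

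Combining these gives $\int_\Sigma\tr(\psi\wedge*\psi)=\int_\Sigma\tr(df\wedge*df)-\int_\Sigma\tr(d\xi\wedge*d\xi)=4\,\mathcal A(f)+\int_\Sigma\tr(\xi\,df\wedge df)$, as claimed. I expect the main obstacle to be the third step, namely recognizing that the cubic term $\int_\Sigma\tr(\xi\,df\wedge df)$ is exactly minus the Dirichlet energy of the exact part; this is the only place where the CMC equation, and hence the normalization $H=1$, enters essentially, whereas the remaining steps are formal Hodge theory and trace bookkeeping. Care must also be taken with the sign convention for the Hodge star, since the paper's normalization $*df=2(\Phi-\Phi^*)$ is what makes the coefficient of $\mathcal A(f)$ come out to $+4$.
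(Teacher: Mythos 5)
Your proposal is correct and follows essentially the same route as the paper: both rest on the orthogonality of the harmonic and exact parts of $df$, the identity $\int_\Sigma \tr(df\wedge *df)=4\,\mathcal A(f)$ coming from Remark~\ref{rem:area-formula}, and integration by parts combined with the CMC equation $d*d\xi = d*df = df\wedge df$. The only difference is organizational — you state the Pythagorean splitting of the energy up front, whereas the paper evaluates the single mixed integral $\int_\Sigma\tr(df\wedge *\psi)$ in two ways — but the ingredients and the computation are the same.
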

\begin{proof}
Since $f$ has mean curvature $H=1$, we have
\[
d*df = 2N\, dA = df \wedge df,
\]
where on the right-hand side the matrix product is used.
Let
\[
*df = *\psi + *d\xi
\]
be the decomposition into harmonic and co-exact parts, and set
\[
*df^{T} := *\psi
\qquad \text{and} \qquad
*df^{N} := *d\xi.
\]

Thus,
\[
\int_\Sigma \tr(df \wedge *df^{T})
=
\int_\Sigma \tr\bigl((\psi + d\xi) \wedge *\psi\bigr)
=
\int_\Sigma \tr(\psi \wedge *\psi).
\]
On the other hand, since
\[
d*d\xi = d*(\psi + d\xi) = d*df = df \wedge df,
\]
we also obtain
\begin{equation*}
\begin{split}
\int_\Sigma \tr(df \wedge *df^{T})
&=
\int_\Sigma \tr(df \wedge *df)
-
\int_\Sigma \tr(df \wedge *df^{N}) \\
&=
4\,\mathcal A(f)
-
\int_\Sigma \tr\bigl((\psi + d\xi) \wedge *d\xi\bigr) \\
&=
4\,\mathcal A(f)
-
\int_\Sigma \tr(d\xi \wedge *d\xi) \\
&=
4\,\mathcal A(f)
-
\int_\Sigma d\,\tr(\xi \wedge *d\xi)
+
\int_\Sigma \tr(\xi\, d*d\xi) \\
&=
4\,\mathcal A(f)
+
\int_\Sigma \tr(\xi\, df \wedge df),
\end{split}
\end{equation*}
which proves the lemma.
\end{proof}

By the Minkowski formula (see \cite{Min} or \cite{Hs}), the enclosed volume of a
closed CMC surface in $\mathbb{R}^3 = \mathfrak{su}(2)$ can be expressed in terms
of its area.
The following theorem provides a generalization of the Minkowski formula to the
case of periodic CMC surfaces.

\begin{The}\label{Thm:enclosedvolr3}
Let $\Gamma \subset \mathfrak{su}(2)$ be a lattice, and let
$f \colon \Sigma \to \mathfrak{su}(2)/\Gamma$
be a compact CMC surface with mean curvature $H=1$.
Then the area $\mathcal A(f)$ and the enclosed volume $\mathcal V(f)$ of $f$
satisfy
\[
K
=
-\tfrac{i}{2\pi}\mathcal A(f)
+
\tfrac{3i}{2\pi}\mathcal V(f)
-
\tfrac{3i}{2\pi}\mathcal V_\Sigma(\rho),
\]
where $K$ is the curvature of the associated family
$\tau \mapsto \nabla^{e^{i\tau}}$ defined in
Theorem~\ref{thm:defK}, and $\mathcal V_\Sigma(\rho)$ denotes the
Wess--Zumino--Witten term of the period map $\rho$.

If the lattice $\Gamma$ has rank at most~$2$, then
\[
K
=
-\tfrac{i}{2\pi}\mathcal A(f)
+
\tfrac{3i}{2\pi}\mathcal V(f).
\]
\end{The}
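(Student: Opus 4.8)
The plan is to compute $K$ directly from its definition in Theorem~\ref{thm:defK}, using the explicit associated family \eqref{eq:famconn-new}, and then to match the outcome term by term against Lemma~\ref{lem:vol} and Lemma~\ref{lem:cmcm1id}. Since Theorem~\ref{thm:defK} guarantees that $K$ is independent of the chosen harmonic gauge, I am free to work in the most convenient one. I would first Taylor expand at the Sym point: substituting $\lambda=e^{i\tau}$ into \eqref{eq:famconn-new} and using $\Phi+\Phi^*=\tfrac{i}{2}df$ from \eqref{eq:defPhifromf} together with the relation $*df=2(\Phi-\Phi^*)$ from the proof of Proposition~\ref{Prop:family_from_f}, one obtains
\[
\nabla^{e^{i\tau}} = d + \tfrac{\tau}{2}\,df - \tfrac{\tau^2}{4}\,*df + O(\tau^3).
\]
Thus the first-order term is $\tfrac12 df=\tfrac12(\psi+d\xi)$, so by Lemma~\ref{lem:exharmonicgauge} the harmonic gauge is $g(\tau)=\exp(-\tfrac{\tau}{2}\xi)$, which removes the exact part and yields $D'=\tfrac12\psi$. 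Expanding $D(\tau)=\nabla^{e^{i\tau}}.g(\tau)$ to second order, the conjugation $g^{-1}(\cdot)g$ contributes $-\tfrac14*df+\tfrac14[\xi,df]$ while the quadratic part of $g^{-1}dg$ contributes an additional $-\tfrac18[\xi,d\xi]$; doubling the quadratic coefficient gives
\[
D'' = -\tfrac12\,*df + \tfrac12[\xi,df] - \tfrac14[\xi,d\xi].
\]
Getting these numerical factors right, in particular the non-abelian $g^{-1}dg$ contribution, is the first point requiring care.

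Substituting into $K=\tfrac{i}{2\pi}\int_\Sigma\tr(D'\wedge D'')=\tfrac{i}{4\pi}\int_\Sigma\tr(\psi\wedge D'')$ separates $K$ into a linear piece and two commutator pieces. For the linear piece, $\psi$ is harmonic and therefore $L^2$-orthogonal to the exact form $d\xi$, so $\int_\Sigma\tr(\psi\wedge *df)=\int_\Sigma\tr(\psi\wedge*\psi)$, and Lemma~\ref{lem:cmcm1id} rewrites this as $4\mathcal A(f)+\int_\Sigma\tr(\xi\,df\wedge df)$. This is precisely how the area term $-\tfrac{i}{2\pi}\mathcal A(f)$ is produced, leaving a cubic remainder $\int_\Sigma\tr(\xi\,df\wedge df)$ that I would expand via $df=\psi+d\xi$. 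The two commutator pieces $\int_\Sigma\tr(\psi\wedge[\xi,df])$ and $\int_\Sigma\tr(\psi\wedge[\xi,d\xi])$ then contribute the remaining cubic integrals.

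The rest is algebraic bookkeeping: I would reduce every cubic integral, by cyclicity of the trace and a single integration by parts, to the three building blocks $\int\tr(\xi\,\psi\wedge\psi)$, $\int\tr(\psi\wedge d\xi\,\xi)$, and $\int\tr(\xi\,d\xi\wedge d\xi)$ appearing in Lemma~\ref{lem:vol}. The two reductions I would rely on are the cyclic identity $\int\tr(\xi\,\psi\wedge d\xi)=\int\tr(\psi\wedge d\xi\,\xi)$ and the Stokes identity $\int_\Sigma d\,\tr(\xi^2\psi)=0$, which (using $d\psi=0$) gives $\int\tr(\xi\,d\xi\wedge\psi)=\int\tr(\psi\wedge d\xi\,\xi)$. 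I expect the main obstacle to be verifying that, after these substitutions, the accumulated coefficients assemble exactly into $\tfrac{3i}{2\pi}\bigl(\mathcal V(f)-\mathcal V_\Sigma(\rho)\bigr)$ as read off from Lemma~\ref{lem:vol}; this is where the precise factors in $D''$ and in Lemma~\ref{lem:cmcm1id} must conspire, and a single misplaced sign or factor would spoil the match.

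Finally, for the rank-at-most-$2$ statement I would invoke Corollary~\ref{Cor:planarvol}: choosing the closed extension $\hat\psi$ to take values in the two-dimensional subspace spanned by $\underline\Gamma$ forces $\tr(\hat\psi\wedge\hat\psi\wedge\hat\psi)=0$, whence $\mathcal V_\Sigma(\rho)=0$ and the general formula collapses to $K=-\tfrac{i}{2\pi}\mathcal A(f)+\tfrac{3i}{2\pi}\mathcal V(f)$.
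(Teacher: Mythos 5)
Your proposal is correct and takes essentially the same route as the paper's proof: the same expansion of $\nabla^{e^{i\tau}}$ at the Sym point, the same harmonic gauge $g(\tau)=\exp\!\left(-\tfrac{\tau}{2}\xi\right)$, and the same reduction via Lemma~\ref{lem:cmcm1id} and Lemma~\ref{lem:vol}; your $D''$ agrees with the paper's form after expanding $[\xi,df]=[\xi,\psi]+[\xi,d\xi]$ and noting $d\xi\,\xi-\tfrac12 d(\xi^2)=-\tfrac12[\xi,d\xi]$. The coefficient bookkeeping you flagged as the main risk does close up exactly as you predict, giving $K=-\tfrac{i}{2\pi}\mathcal A(f)+\tfrac{3i}{2\pi}\bigl(\mathcal V(f)-\mathcal V_\Sigma(\rho)\bigr)$, and your treatment of the rank-at-most-$2$ case via Corollary~\ref{Cor:planarvol} matches the paper's convention for the planar extension $\hat\psi$.
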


\begin{proof}
Consider the family of flat connections
\[
\nabla^\lambda
=
d + (\lambda^{-1}-1)\Phi - (\lambda-1)\Phi^*
\]
associated to $f$, and restrict it to the unit circle
$\lambda = e^{i\tau}$ for $\tau \in \mathbb{R}$.
Expanding in $\tau$ at $\tau = 0$ gives
\begin{equation*}
\begin{split}
\nabla^{e^{i\tau}}
&=
d - i(\Phi+\Phi^*)\,\tau + \tfrac{1}{2}(-\Phi+\Phi^*)\,\tau^2 + \dots \\
&=
d + \tfrac{1}{2}(d\xi+\psi)\,\tau
- \tfrac{1}{4}(*d\xi+*\psi)\,\tau^2 + \dots ,
\end{split}
\end{equation*}
where the closed $1$-form
\[
df = -2i(\Phi+\Phi^*) = d\xi + \psi
\]
is decomposed into its exact part $d\xi$ and its harmonic part $\psi$. Consider the family of gauge transformations
\[
\tau \longmapsto g(\tau) := \exp\!\left(-\tfrac{\tau}{2}\,\xi\right).
\]
This defines a harmonic gauge for the family
$\tau \mapsto \nabla^{e^{i\tau}}$.
A direct computation yields
\begin{equation}\label{eq:Dexp}
\begin{split}
D(\tau)
:=
\nabla^{e^{i\tau}} . g(\tau)
=
d
+ \tfrac{1}{2}\psi\,\tau
-\tfrac{1}{4}\bigl(
*d\xi + *\psi + [\psi,\xi]
+ d\xi\,\xi - \tfrac{1}{2}d(\xi^2)
\bigr)\tau^2
+ \dots .
\end{split}
\end{equation}

Substituting the terms of \eqref{eq:Dexp} into the definition of $K$ and using
Lemma~\ref{lem:cmcm1id}, we obtain
\begin{equation*}
\begin{split}
8\pi i K
&=
8\pi i\,\Omega(D' \wedge D'') \\
&=
\int_\Sigma
\tr\!\left(
\psi \wedge
\bigl(
*d\xi + *\psi + [\psi,\xi]
+ d\xi\,\xi - \tfrac{1}{2}d(\xi^2)
\bigr)
\right) \\
&=
\int_\Sigma \tr(\psi \wedge *\psi)
+ \int_\Sigma \tr(\psi \wedge [\psi,\xi])
+ \int_\Sigma \tr(\psi \wedge d\xi\,\xi) \\
&=
4\,\mathcal A(f)
+ \int_\Sigma \tr(\xi\, df \wedge df)
+ \int_\Sigma \tr(\psi \wedge \psi\,\xi)
- \int_\Sigma \tr(\psi \wedge \xi\,\psi)
+ \int_\Sigma \tr(\psi \wedge d\xi\,\xi) \\
&=
4\,\mathcal A(f)
- 12\,\mathcal V(f)
+ 12\,\mathcal V_\Sigma(\rho),
\end{split}
\end{equation*}
where the last equality follows from Lemma~\ref{lem:vol} and the definition of the
Wess--Zumino--Witten term $\mathcal V_\Sigma(\rho)$.
\end{proof}

A direct corollary is the classical Minkowski formula \cite{Min} for CMC surfaces
in $\mathbb{R}^3$.

\begin{Cor}\label{cor:encvol-noperiods}
If $f$ is a closed CMC immersion into $\mathbb{R}^3$ with constant mean curvature
$H$, then
\[
\mathcal A(f) = 3H\,\mathcal V(f).
\]
\end{Cor}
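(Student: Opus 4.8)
The plan is to obtain this as the degenerate, closed-surface specialization of Theorem~\ref{Thm:enclosedvolr3}, combined with a scaling argument to pass from the normalization $H=1$ to arbitrary constant mean curvature. A closed immersion $f\colon\Sigma\to\mathbb{R}^3$ corresponds to the trivial lattice $\Gamma=\{0\}$, which has rank $0\le 2$, so the second branch of the theorem applies and gives
\[
K=-\tfrac{i}{2\pi}\mathcal A(f)+\tfrac{3i}{2\pi}\mathcal V(f).
\]
It therefore suffices to show that $K=0$ in this case and then to track the scaling.

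First I would reduce to $H=1$. Given a closed CMC immersion $f$ of constant mean curvature $H$, the dilated immersion $Hf$ has mean curvature $1$, since under scaling lengths by a factor $c=H$ the mean curvature transforms as $H\mapsto H/c$. The area and enclosed volume then scale as $\mathcal A(Hf)=H^2\mathcal A(f)$ and $\mathcal V(Hf)=H^3\mathcal V(f)$, the latter being consistent with the definition of the algebraic volume in Lemma~\ref{lem:evT3}. Hence the normalized identity $\mathcal A(Hf)=3\,\mathcal V(Hf)$ is equivalent, after dividing by $H^2$, to the desired $\mathcal A(f)=3H\,\mathcal V(f)$, and it is enough to treat the case $H=1$.

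Next I would argue that $K$ vanishes for a closed surface. Here $f$ is a globally well-defined map into $\mathbb{R}^3$, so its differential $df$ is exact; equivalently, the period map $\rho\colon H_1(\Sigma,\mathbb{Z})\to\Gamma=\{0\}$ is trivial, and the harmonic part in the Hodge decomposition $df=\psi+d\xi$ vanishes, $\psi=0$. From the computation in the proof of Theorem~\ref{Thm:enclosedvolr3}, see \eqref{eq:Dexp}, the first derivative of the associated family in a harmonic gauge is $D'=\tfrac12\psi$, which now vanishes identically. Consequently $K=\tfrac{i}{2\pi}\int_\Sigma\tr(D'\wedge D'')=0$.

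Substituting $K=0$ into the rank-$0$ formula yields $0=-\tfrac{i}{2\pi}\mathcal A(f)+\tfrac{3i}{2\pi}\mathcal V(f)$, i.e.\ $\mathcal A(f)=3\,\mathcal V(f)$ in the normalized case, and undoing the scaling gives the general identity. I do not expect a genuine obstacle: all of the substance resides in Theorem~\ref{Thm:enclosedvolr3}, and the corollary amounts to the observation that both the curvature $K$ and the Wess--Zumino--Witten term $\mathcal V_\Sigma(\rho)$ collapse to zero in the absence of periods. The only point deserving minor care is the scaling step, where one must verify that the signed enclosed volume scales as the cube of the dilation factor so that the powers of $H$ match up correctly.
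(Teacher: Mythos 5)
Your proposal is correct and follows essentially the same route as the paper: normalize to $H=1$ by scaling, observe that the absence of periods forces $D'=0$ in a harmonic gauge (hence $K=0$) and that the WZW term vanishes for lattices of rank at most $2$, then specialize Theorem~\ref{Thm:enclosedvolr3}. Your added details --- the explicit $H^2$/$H^3$ scaling of area and volume, and the identification $D'=\tfrac12\psi=0$ via exactness of $df$ --- are exactly the verifications the paper leaves implicit.
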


\begin{proof}
By scaling, we may normalize to $H=1$.
If $f$ has no translational periods, then $D'=0$ in any harmonic gauge, and hence
$K=0$ by Theorem~\ref{thm:defK}.
On the other hand, $\mathcal V_\Sigma(\rho)$ vanishes automatically when the
translational periods lie in a lattice of rank at most~$2$.
Thus, the Minkowski formula follows as a special case of
Theorem~\ref{Thm:enclosedvolr3}.
\end{proof}

\begin{Exa}
The round sphere of radius $1$ with outward-pointing normal has mean curvature
$H=1$, area $4\pi$, and enclosed volume $\tfrac{4}{3}\pi$.
Theorem~\ref{Thm:enclosedvolr3} therefore implies that $K$ vanishes.
On the other hand, the moduli space of flat $\mathrm{SU}(2)$-connections on the
$2$-sphere consists of a single point, so the Atiyah--Bott symplectic form
$\Omega$ vanishes identically; compare \cite[Example~2]{HHTS3}.
As a consequence, $K=0$ holds automatically.
\end{Exa}

\begin{Exa}\label{ex:flatcylinder}
Consider the round cylinder of radius $\tfrac{1}{2}$ and height $\tfrac{h}{2}>0$
given by
\[
f \colon \Sigma=\mathbb{C}/(h\mathbb{Z}+2\pi i\mathbb{Z})
\to \mathfrak{su}(2)/\Gamma,
\qquad
(x,y)\longmapsto
\begin{psmallmatrix}
 \tfrac{i x}{2} & -\tfrac{1}{2} e^{-i y} \\
 \tfrac{1}{2} e^{i y} & -\tfrac{i x}{2}
\end{psmallmatrix},
\]
where $\Gamma:=\tfrac{i}{2}\diag(1,-1)\,\mathbb{Z}$.
The area is
\[
\mathcal A(f)=\tfrac{h}{2}\pi,
\]
and the enclosed volume is
\[
\mathcal V(f)=\tfrac{h}{8}\pi.
\]
By Theorem~\ref{Thm:enclosedvolr3}, the curvature $K$ satisfies
\begin{equation}\label{exa:hol3}
K
=
-\tfrac{i}{2\pi}\mathcal A(f)
+\tfrac{3i}{2\pi}\mathcal V(f)
=
-\tfrac{i}{4}h
+\tfrac{3i}{16}h
=
-\tfrac{i}{16}h.
\end{equation}

On the other hand, the associated family along the unit circle
$\lambda = e^{i\tau}$ is given by
\begin{equation*}
\begin{split}
\nabla^{e^{i\tau}}
= d
&+
\begin{psmallmatrix}
 \tfrac{1}{8} e^{-i \tau } \left(-1+e^{2 i \tau }\right)
 &
 \tfrac{1}{8} i \left(-1+e^{i \tau }\right)^2 e^{-i (\tau +y)} \\
 \tfrac{1}{8} i \left(-1+e^{i \tau }\right)^2 e^{i (y-\tau )}
 &
 -\tfrac{1}{8} e^{-i \tau } \left(-1+e^{2 i \tau }\right)
\end{psmallmatrix}
dx \\
&+
\begin{psmallmatrix}
 -\tfrac{1}{8} i e^{-i \tau } \left(-1+e^{i \tau }\right)^2
 &
 \tfrac{1}{8} \left(-1+e^{2 i \tau }\right) e^{-i (\tau +y)} \\
 \tfrac{1}{8} \left(-1+e^{2 i \tau }\right) e^{i (y-\tau )}
 &
 \tfrac{1}{8} i e^{-i \tau } \left(-1+e^{i \tau }\right)^2
\end{psmallmatrix}
dy .
\end{split}
\end{equation*}
Gauging by
\[
g =
\begin{psmallmatrix}
 -i \left(-1+e^{\frac{i \tau}{2}}\right) e^{-\frac{i y}{2}}
 &
 -i \left(1+e^{\frac{i \tau}{2}}\right) e^{-\frac{i y}{2}} \\
 \left(1+e^{\frac{i \tau}{2}}\right) e^{\frac{i y}{2}}
 &
 \left(-1+e^{\frac{i \tau}{2}}\right) e^{\frac{i y}{2}}
\end{psmallmatrix},
\]
we obtain
\begin{equation*}
\begin{split}
D^\tau
=
\nabla^{e^{i\tau}} . g
=
d
&+
\begin{psmallmatrix}
 -\tfrac{1}{2} i \sin \!\left(\tfrac{\tau}{2}\right) & 0 \\
 0 & \tfrac{1}{2} i \sin \!\left(\tfrac{\tau}{2}\right)
\end{psmallmatrix}
dx \\
&+
\begin{psmallmatrix}
 \tfrac{1}{2} i \cos \!\left(\tfrac{\tau}{2}\right) & 0 \\
 0 & -\tfrac{1}{2} i \cos \!\left(\tfrac{\tau}{2}\right)
\end{psmallmatrix}
dy .
\end{split}
\end{equation*}
In particular, $g$ is a harmonic gauge for
$\tau \mapsto \nabla^{e^{i\tau}}$, and we compute
\[
K
=
\tfrac{i}{2\pi}\int_\Sigma \tr(D' \wedge D'')
=
-\tfrac{i}{16}h,
\]
in agreement with \eqref{exa:hol3} and
Theorem~\ref{Thm:enclosedvolr3}.
\end{Exa}

\section{Computing the curvature $K$}\label{section:computeK}

\subsection{The CMC torus case}\label{ssec:specdata}
Consider an equivariant CMC $1$ torus with translational periods, and let
$\omega = df \in \Omega^1(T^2,\mathbb{R}^3)$. 
Clearly, the periods of $\omega$ generate a lattice
$\Gamma \le \mathbb{R}^3$ of rank $r \le 2$.
In fact, we have the following result.

\begin{The}\label{pro:1dlat}
The period lattice of an equivariant CMC torus $f$ with $H \neq 0$ and
non-trivial translational periods has rank~$1$.
\end{The}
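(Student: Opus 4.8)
The plan is to exploit the defining feature of an equivariant surface -- invariance under a one-parameter group of rigid motions of $\mathbb{R}^3$ -- and to confront this with the fact that the deck transformations realizing the periods are pure translations. I would write the equivariance on the universal cover as $f(z+se)=M_s\,f(z)$, where $\{M_s=\exp(s\widehat X)\}_{s\in\mathbb{R}}$ is a one-parameter subgroup of rigid motions with infinitesimal generator $\widehat X(p)=a\times p+b$, and $e\in\mathbb{C}$ is the symmetry direction in the domain. The periods are realized by deck transformations $f(z+\omega_j)=T_{p_j}\,f(z)$ for $j=1,2$, where $T_{p_j}$ denotes translation by the period $p_j=\rho([\gamma_j])$ and $\omega_1,\omega_2$ generate the lattice of $T^2$.

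First I would establish the commutation relation $M_s\,T_{p_j}=T_{p_j}\,M_s$. This follows purely from the fact that translations in the domain commute: evaluating $f$ at $z+se+\omega_j$ in the two orders gives $M_s\,T_{p_j}\,f(z)=T_{p_j}\,M_s\,f(z)$ for all $z$, and since $f$ has $H\neq 0$ its image is not contained in any affine plane, so two rigid motions agreeing on it must coincide. Next, conjugation of a translation by a rigid motion acts through its rotational part: if $R_s$ denotes the rotational part of $M_s$ (rotation by angle $s|a|$ about the axis $a$), then $M_s\,T_{p_j}\,M_s^{-1}=T_{R_s p_j}$. The commutation relation therefore forces $R_s p_j=p_j$ for all $s$ and all $j$. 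When the generator has nontrivial rotational part, i.e.\ $a\neq 0$, the only vectors fixed by every $R_s$ are the multiples of $a$; hence $p_1,p_2\in\mathbb{R}a$, the period lattice $\Gamma=\rho\bigl(H_1(T^2,\mathbb{Z})\bigr)$ lies on a line, and its rank is at most $1$.

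It remains to treat the degenerate case $a=0$, in which $M_s=T_{sb}$ is a pure translation and the commutation relation yields no information. Here the equivariance reads $f_e=b$, so $f$ is invariant under the translations $T_{sb}$ and is therefore a generalized cylinder, ruled by the lines in direction $b$. Since $f$ has constant mean curvature $H\neq 0$, the classical classification of ruled CMC surfaces forces it to be a round circular cylinder; its cross-sectional circle closes up with zero net translation, so the only nonzero translational periods are axial and again parallel to $b$, whence the lattice has rank at most $1$. In both cases the hypothesis of non-trivial translational periods upgrades ``rank at most $1$'' to ``rank exactly $1$,'' which is the assertion.

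I expect the main subtlety to lie not in the rotational case -- which is a clean consequence of the commutation relation and the fact that a nontrivial rotation about $a$ fixes only $\mathbb{R}a$ -- but in two checkpoints. The first is verifying that the rigid motions $M_s\,T_{p_j}$ and $T_{p_j}\,M_s$ genuinely coincide rather than merely agreeing on the surface, for which one must use non-planarity (guaranteed by $H\neq 0$) to conclude that the image affinely spans $\mathbb{R}^3$. The second is the pure-translation case $a=0$, where one must invoke (or reprove) the rigidity of ruled CMC surfaces and confirm that the closed profile circle contributes no period, so that all surviving periods are forced onto the single axial direction.
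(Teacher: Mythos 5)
Your proof is correct, but it takes a genuinely different route from the paper's. You argue extrinsically and classically: the deck translations $T_{p_j}$ must commute with the equivariance group $M_s$ (using non-planarity, guaranteed by $H\neq 0$, to promote agreement on the image to equality of rigid motions), conjugation converts commutation into $R_s p_j=p_j$, and a nontrivial rotational part then pins all periods to the axis $\mathbb{R}a$; the degenerate case $a=0$ is disposed of by classifying translation-invariant CMC surfaces as round cylinders. The paper instead stays entirely inside its gauge-theoretic framework: it writes $df=\mu\,dx+\nu\,dy+d\xi$ on the torus (the harmonic part having constant coefficients $\mu,\nu\in\mathfrak{su}(2)$), expands the flatness of the associated family $\nabla^{e^{i\tau}}$ to second order in $\tau$ to conclude that $[df\wedge df]$ is exact, observes that all terms involving $d\xi$ are automatically exact, and so deduces that $[\mu,\nu]\,dx\wedge dy$ is exact, forcing $[\mu,\nu]=0$; since commuting elements of $\mathfrak{su}(2)$ are linearly dependent, all periods lie on one line. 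The trade-off is instructive: the paper's argument never actually uses equivariance, so it proves the stronger statement that \emph{any} CMC torus with $H\neq 0$ and translational periods has period lattice of rank at most $1$, and it fits the integrable-systems machinery used throughout the paper; your argument uses equivariance essentially and would not cover non-equivariant tori, but it is elementary, avoids the associated family altogether, and exposes the geometric mechanism (a screw symmetry forces periods onto its axis). Two small points to nail down in your write-up: (i) that $s\mapsto M_s$ is a continuous one-parameter group (both the homomorphism property and continuity follow from the same affine-spanning argument), which justifies writing $M_s=\exp(s\widehat X)$; and (ii) in the case $a=0$ you should first exclude $b=0$, since otherwise $f$ would be constant along the lines $z+\mathbb{R}e$, contradicting that it is an immersion, before invoking the cylinder classification.
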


\begin{proof}
By scaling, we may assume that the mean curvature of the equivariant CMC surface
$f$ is equal to $1$.
The associated family $\nabla^\lambda$ is well defined on the trivial
$\mathbb{C}^2$-bundle over a torus $T^2=\mathbb{C}/\Lambda$ for some lattice
$\Lambda$.
Let $dz = dx + i\,dy$ be a holomorphic $1$-form on $T^2$, and let
\[
df = (\mu\,dx + \nu\,dy) + d\xi
\]
be the Hodge decomposition of
$df \in \Omega^1(T^2,\mathfrak{su}(2))$, where
$\mu,\nu \in \mathfrak{su}(2)$ and
$\xi \colon T^2 \to \mathfrak{su}(2)$.

Expanding at $\tau=0$, we have
\[
\nabla^{e^{i\tau}}
=
d + \tfrac{1}{2} df\,\tau + \tfrac{1}{2}\psi_2\,\tau^2 + \dots .
\]
Expanding the curvature gives
\begin{equation}
\begin{split}
0
=
F^{\nabla^{e^{i\tau}}}
=
\bigl(\tfrac{1}{4}[df \wedge df] + \tfrac{1}{2} d\psi_2 \bigr)\tau^2 + \dots ,
\end{split}
\end{equation}
so that
\[
\tfrac{1}{2}[df \wedge df]
=
[\mu,\nu]\,dx \wedge dy
+
[\mu\,dx \wedge d\xi]
+
[\nu\,dy \wedge d\xi]
+
[d\xi \wedge d\xi]
\]
must be exact.
On the other hand, the exactness of
\[
[\mu\,dx \wedge d\xi]
+
[\nu\,dy \wedge d\xi]
+
[d\xi \wedge d\xi]
\]
follows from the exactness of $d\xi$, and therefore
\[
[\mu,\nu]=0.
\]
This implies that $\mu$ and $\nu$ are linearly dependent, and the claim follows.
\end{proof}

For embedded surfaces, the above theorem follows directly from the results of
\cite{Meeks}.
Conversely, let $\Gamma$ be a lattice of rank~$1$ and let
$f \colon \Sigma \to \mathbb{R}^3/\Gamma$ be a compact, embedded CMC surface.
Then $f$ must be a CMC cylinder by \cite{KKS}.


\subsubsection{Computing $K$ from spectral data}

The associated family of flat connections of a singly periodic CMC cylinder is
well defined on a torus $T^2$ and gives rise to spectral data
$(X,\theta_1,\theta_2,L)$ parametrizing the family
$\lambda \mapsto \nabla^\lambda$; see \cite{Hi,Hi2} for details.
Here, $X$ is a hyperelliptic Riemann surface with real symmetry, and
$\theta_1,\theta_2$ are meromorphic differentials of the second kind, compatible
with the real symmetry and having double poles at the points lying over $0$ and
$\infty$.
If the genus of $X$ is at least $2$, the principal parts of $\theta_1$ and
$\theta_2$ are linearly independent over $\mathbb{R}$.
In this case, they span a cocompact lattice equivalent to the lattice $\Lambda$
of $T^2=\mathbb{C}/\Lambda$.

The hyperelliptic surface $\pi \colon X \to \mathbb{CP}^1$ is not branched over
points on the unit circle.
Hence, the preimage of the unit circle is the disjoint union of two topological
circles.
The differentials $\theta_1$ and $\theta_2$ are the logarithmic differentials of
the eigenvalues $\pm\nu_k$ along the corresponding generators of the lattice
$\Lambda$.
More explicitly, along $\pi^{-1}(\S^1) \subset X$, the unitary gauge class of
$\nabla^\lambda$ is given by
\[
d
+
\diag(\log \nu_1,-\log\nu_1)\,dx
+
\diag(\log \nu_2,-\log\nu_2)\,dy,
\]
where $dx$ and $dy$ are dual to the two generators of $\Lambda$.
At $\lambda=1$, the gauge class is trivial, and we may assume without loss of
generality that for $p \in X$ with $\pi(p)=1$ we have
$\log \nu_1(p)=\log \nu_2(p)=0$.
Note that $dx$ and $dy$ are harmonic.
Since $\pi$ is unbranched over $\S^1$, we may locally use $\lambda = e^{it}$ as a
parameter and expand
\[
\theta_1 = d\log\nu_1 = \bigl(a_1 + a_2(\lambda-1) + \dots\bigr)\,d\lambda,
\qquad
\theta_2 = d\log\nu_2 = \bigl(b_1 + b_2(\lambda-1) + \dots\bigr)\,d\lambda.
\]
Using the definition of $K$ in Theorem~\ref{thm:defK}, we obtain
\begin{equation}\label{exK}
K
=
\tfrac{i}{2\pi}
\int_{\mathbb{C}/\Lambda}
2(a_1 b_2 - a_2 b_1)\,dx \wedge dy
=
\tfrac{i}{\pi}(a_1 b_2 - a_2 b_1)
\int_{\mathbb{C}/\Lambda} dx \wedge dy .
\end{equation}
Together with the energy (area) formula
\cite[Theorem~13.17(ii)]{Hi} and Theorem~\ref{Thm:enclosedvolr3},
\eqref{exK} yields a formula for the enclosed volume $\mathcal V(f)$ in terms of
the spectral data.

 \subsection{DPW potentials}

Since $K$ is gauge invariant (Theorem~\ref{thm:defK}), it can be computed using
different representations of the gauge class of the associated family of flat
connections.
An example of this was given in Section~\ref{ssec:specdata}, where $K$ for an
equivariant CMC torus was computed in terms of the corresponding spectral data.
We now describe a general method for computing $K$ using so-called DPW
potentials.
Throughout this subsection, we restrict to surfaces whose period lattice has
rank~$2$.

We assume that the CMC surface $f$ is described in terms of a
\emph{DPW potential} $d+\eta$.
By definition, a DPW potential is given by a $\lambda$-family (defined on a
punctured disc of radius $r>1$)
\[
\eta=\sum_{k\geq -1}\eta_k\,\lambda^k
\]
of meromorphic $\mathfrak{sl}(2,\mathbb{C})$-valued $1$-forms on $\Sigma$ such that
$\eta_{-1}$ is nilpotent.
The \emph{intrinsic closing conditions} for the DPW potential $\eta$ are the
existence of a holomorphic family of (meromorphic) gauge transformations
$B=b_0+b_1\lambda+\dots$ on $\Sigma$ such that
\[
(d+\eta).B
:=
d + B^{-1}\eta B + B^{-1}dB
=
\nabla + \lambda^{-1}\Phi - \lambda\Phi^*,
\footnote{The differential $d$ acts on the $\Sigma$-variable only.}
\]
for a unitary connection $\nabla$ and a $(1,0)$-form $\Phi$ together with its
adjoint $(0,1)$-form $\Phi^*$.
The poles of $\eta$ are then called \emph{apparent singularities}.

The \emph{extrinsic closing condition} for a translationally periodic CMC surface
is that the monodromy of the unitarized family
$\nabla^\lambda=(d+\eta).B$ is trivial at $\lambda=1$, and that the
$\lambda$-derivative of its monodromy at $\lambda=1$ spans a lattice in
$\mathfrak{su}(2)\cong\mathbb{R}^3$ of rank at most~$3$.
These extrinsic closing conditions imply that $(d+\eta).B$ satisfies the
conditions stated at the beginning of Section~\ref{subs:cmcr3}.
Furthermore, the periods of the CMC surface obtained via
Proposition~\ref{Prop:family_from_f} are contained in a lattice
$\Gamma<\mathfrak{su}(2)$.

\begin{Rem}\label{rem:lgfac}
Assume that $d+\eta$ satisfies the intrinsic and extrinsic closing conditions,
and let $\Phi_{\mathrm{DPW}}$ be a solution of
\begin{equation}\label{eq:dpwsol}
(d+\eta)\,\Phi_{\mathrm{DPW}}=0
\end{equation}
with unitary monodromy at some base point $z_0$.
Then $\Phi_{\mathrm{DPW}}=BF$, where $B$ is the gauge introduced above and
$F=F^\lambda$ is a unitary frame for
$\nabla^\lambda=(d+\eta).B$, as in Remark~\ref{Rem:scaling2}.
On the other hand, the decomposition $\Phi_{\mathrm{DPW}}=BF$ can be obtained
via the loop group Iwasawa decomposition, where $B$ extends holomorphically to
$\lambda=0$ and $F$ is unitary for $\lambda\in\S^1$.
See \cite{DPW} or \cite[Section~3]{Fuchsian} for further details.
Thus, the surface can be reconstructed directly from the DPW potential via
Iwasawa factorization.
\end{Rem}
  
\subsection{Residue formulas for $K$}

We consider a (meromorphic) DPW potential $\eta$ on a compact Riemann surface
$\Sigma$ with apparent singularities at $p_1,\dots,p_n$.
We assume that the intrinsic and extrinsic monodromy problems are solved, i.e.,
that the resulting immersion has translational periods contained in a
three-dimensional lattice.
In particular, the monodromy of $d+\eta$ is trivial at $\lambda=1$.
Gauging by the inverse of $\Phi_{\mathrm{DPW}}^{\lambda=1}$, where
$\Phi_{\mathrm{DPW}}$ is as in \eqref{eq:dpwsol}, we may assume without loss of
generality that $\eta^{\lambda=1}=0$.
Writing $\lambda=e^{i\tau}$, we expand the potential in a neighborhood of
$\tau=0$ as
\begin{equation}\label{eq:eta-exp}
\eta
=
\tau\,\newPhi_1
+
\tfrac{\tau^2}{2}\,\newPhi_2
+
O(\tau^3),
\end{equation}
where $\newPhi_1,\newPhi_2$ are meromorphic
$\mathfrak{sl}(2,\mathbb{C})$-valued $1$-forms. By Lemma~\ref{lem:exharmonicgauge}, there exists a harmonic gauge $g$ such that
\[
(d+\eta).g
=
d
+
\tau\,\psi_1
+
\tfrac{\tau^2}{2}\,\psi_2
+
O(\tau^3),
\]
with $\psi_1$ a smooth, harmonic
$\mathfrak{sl}(2,\mathbb{C})$-valued $1$-form on $\Sigma$.
Since $\eta|_{\lambda=1}=0$, we normalize the harmonic gauge by requiring
$g|_{\lambda=1}=\Id$.

Expand the gauge $g$ at $\lambda=1$, equivalently around $\tau=0$, as
\[
g
=
\Id
+
\tau\,\xi_1
+
\tfrac{\tau^2}{2}\,\xi_2
+
O(\tau^3),
\]
where $\xi_1,\xi_2 \colon \Sigma \to \mathfrak{sl}(2,\mathbb{C})$ are smooth.
Then
\begin{align*}
D^\tau
:=
(d+\eta).g
&=
d
+
(1-\tau\xi_1)
\left(
\tau\,\newPhi_1+\tfrac{\tau^2}{2}\,\newPhi_2
\right)
(1+\tau\xi_1) \\
&\quad
+
(1-\tau\xi_1)
\left(
\tau\,d\xi_1+\tfrac{\tau^2}{2}\,d\xi_2
\right)
+
O(\tau^3) \\
&=
d
+
\tau(\newPhi_1+d\xi_1)
+
\tfrac{\tau^2}{2}
\left(
\newPhi_2+d\xi_2
-2\xi_1 d\xi_1
+2[\newPhi_1,\xi_1]
\right)
+
O(\tau^3).
\end{align*}
We therefore obtain
\begin{equation}\label{eq:Psi}
\begin{cases}
\psi_1=\newPhi_1+d\xi_1,\\[0.3em]
\psi_2=\newPhi_2+d\xi_2-2\xi_1 d\xi_1+2[\newPhi_1,\xi_1].
\end{cases}
\end{equation}

Since $\psi_1$ is a smooth harmonic $1$-form, we have
\[
\Res_{p_j}\newPhi_1
=
\tfrac{1}{2\pi i}
\int_{C(p_j,\varepsilon)} \psi_1
=
0,
\]
where $C(p_j,\varepsilon)$ denotes a positively oriented circle of radius
$\varepsilon>0$ around $p_j$, contained in a coordinate neighborhood of $p_j$.
Hence, for $1\le j\le n$, the primitive
\begin{equation}\label{eq:primpj}
F_1^j=\int \newPhi_1
\end{equation}
defines a meromorphic map in a neighborhood of $p_j$, unique up to an additive
constant.

\begin{Lem}
Assume that the lattice $\Gamma \le \mathfrak{su}(2)$ generated by the periods of
the CMC surface has rank~$2$.
After applying a gauge transformation if necessary, we may assume that the
meromorphic $\mathfrak{su}(2)$-valued $1$-form $\newPhi_1$ is off-diagonal.
\end{Lem}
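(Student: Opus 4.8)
The plan is to prove off\nobreakdash-diagonality in two stages: first arrange that the harmonic first\nobreakdash-order term $\psi_1$ is off-diagonal by a constant rotation, and then remove the necessarily exact diagonal part of $\newPhi_1$ by an explicit $\lambda$\nobreakdash-dependent diagonal gauge. The key geometric input is the rank\nobreakdash-$2$ hypothesis, and the key analytic input is the residue vanishing established just before the lemma.

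I would begin from the relation $\psi_1=\newPhi_1+d\xi_1$ of \eqref{eq:Psi}. After a constant gauge transformation I may assume that $\psi_1$ equals the $\mathfrak{su}(2)$\nobreakdash-valued harmonic part $\tfrac12\psi$ of $df$ produced in the proof of Theorem~\ref{Thm:enclosedvolr3}; in particular $\psi_1$ is $\mathfrak{su}(2)$\nobreakdash-valued, and its periods span (up to a constant factor) the lattice $\Gamma$ by the extrinsic closing condition. Since $\Gamma$ has rank $2$, the orthogonal projection of $\psi_1$ onto the line orthogonal to $\Span_\R\Gamma\subset\mathfrak{su}(2)$ is a real harmonic $1$\nobreakdash-form with vanishing periods, hence exact, hence zero on the compact surface $\Sigma$; thus $\psi_1$ takes values in the $2$\nobreakdash-plane $\Span_\R\Gamma$. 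Because $\Ad\colon\SU(2)\to\SO(3)$ is onto and $\SO(3)$ acts transitively on planes through the origin, a constant gauge $g_0\in\SU(2)$ carries $\Span_\R\Gamma$ onto the off-diagonal plane of $\mathfrak{su}(2)$. After this conjugation $\psi_1$ is off-diagonal, so $\psi_1^{\diag}=0$.

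Next I would extract the diagonal part of $\newPhi_1$. From $\psi_1^{\diag}=0$ and \eqref{eq:Psi} we obtain $\newPhi_1^{\diag}=-(d\xi_1)^{\diag}$. Writing $\newPhi_1^{\diag}=\alpha\,\diag(1,-1)$ with $\alpha$ a scalar meromorphic $1$\nobreakdash-form, two vanishing statements combine: the residue identity $\Res_{p_j}\newPhi_1=0$ already established forces the residues of $\alpha$ to vanish, while for any cycle $\gamma$ avoiding the apparent singularities the single-valuedness of $\xi_1$ gives $\int_\gamma\newPhi_1=\int_\gamma\psi_1$, whose diagonal part is $0$ because $\psi_1$ is off-diagonal. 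Hence $\alpha$ has neither residues nor periods, and therefore admits a single-valued meromorphic primitive $p$ with $dp=\alpha$.

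Finally I would gauge the exact diagonal part away by a $\lambda$\nobreakdash-dependent diagonal transformation that is trivial at the Sym point, so that the DPW framework is preserved. Setting $H=\diag(1,-1)$ and $h(\lambda,z)=\exp\!\big(i\,p(z)\,(\lambda-1)\,H\big)$, one has $h|_{\lambda=1}=\Id$ and $h^{-1}dh=i\,\alpha\,(\lambda-1)\,H$; using $\lambda-1=i\tau+O(\tau^2)$, the first-order term of $(d+\eta).h$ becomes $\newPhi_1-\newPhi_1^{\diag}=\newPhi_1^{\off}$. Since $h$ is entire and invertible in $\lambda$ near $0$ and near $1$, it preserves the nilpotency of the $\lambda^{-1}$\nobreakdash-term and leaves the gauge class, hence $K$, unchanged by Theorem~\ref{thm:defK}. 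I expect the exactness step to be the crux: one must combine the previously proved residue vanishing with the rank\nobreakdash-$2$ geometric input to guarantee that $\newPhi_1^{\diag}$ has a single-valued primitive, and then verify that the compensating gauge can be chosen holomorphically in $\lambda$ rather than merely as a non-holomorphic function of $\tau=-i\log\lambda$.
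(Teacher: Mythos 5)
Your proposal is correct and follows essentially the same route as the paper: a constant conjugation placing $\Span_\R\Gamma$ in the off-diagonal plane, the observation that the diagonal part of $\newPhi_1$ has vanishing residues and periods and hence a single-valued meromorphic primitive, and a diagonal exponential gauge (trivial at the Sym point) that removes it. The only difference is cosmetic: the paper gauges by $\exp(-\tau h)$ in the parameter $\tau$, while you use $\exp\!\bigl(i\,p\,(\lambda-1)H\bigr)$ to stay holomorphic in $\lambda$; both suffice since $K$ only depends on the gauge class near $\lambda=1$ by Theorem~\ref{thm:defK}.
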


\begin{proof}
After conjugating by a constant element $C \in \mathrm{SU}(2)$, the lattice
$\Gamma$ is contained in the off-diagonal subspace of $\mathfrak{su}(2)$.
In particular, the primitive $h$ of the diagonal part of $\newPhi_1$ is a
well-defined meromorphic function.
Gauging by $\exp(-\tau h)$ then yields a new $1$-form $\newPhi_1$ which is
off-diagonal.
\end{proof}

In particular, after conjugating by a constant if necessary, the locally defined
primitives $F_1^j$ may be chosen to be off-diagonal.

\begin{The}\label{theorem:computeK}
Assume that $\newPhi_1$ is off-diagonal.
Then the curvature of the family $D^{\tau}$ is given by
\[
K
=
-\sum_{j=1}^n
\tr\!\left(
\Res_{p_j}\bigl(F_1^j\,\newPhi_2\bigr)
\right),
\]
where each primitive $F_1^j$ is chosen to be off-diagonal.
\end{The}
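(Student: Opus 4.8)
The plan is to unwind the definition of $K$ directly from the expansion \eqref{eq:Psi} and then localize the resulting integral at the apparent singularities by integration by parts. Since the harmonic gauge yields $D^\tau = d + \tau\psi_1 + \tfrac{\tau^2}{2}\psi_2 + O(\tau^3)$, we have $D' = \psi_1$ and $D'' = \psi_2$, so that
\[
K = \frac{i}{2\pi}\int_\Sigma \tr(\psi_1\wedge\psi_2).
\]
I would substitute $\psi_1 = \vartheta_1 + d\xi_1$ and $\psi_2 = \vartheta_2 + d\xi_2 - 2\xi_1 d\xi_1 + 2[\vartheta_1,\xi_1]$ from \eqref{eq:Psi}. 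Because $\psi_1,\psi_2$ are smooth on all of $\Sigma$ while $\vartheta_1,\vartheta_2$ and the gauge terms $\xi_1,\xi_2$ are singular at the $p_j$, I would pass to the excised surface $\Sigma_\varepsilon := \Sigma\setminus\bigcup_j D(p_j,\varepsilon)$, perform all manipulations there, and let $\varepsilon\to 0$ at the end; smoothness of $\psi_1\wedge\psi_2$ guarantees $\int_\Sigma = \lim_{\varepsilon\to 0}\int_{\Sigma_\varepsilon}$.

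The computational engine is Stokes' theorem on $\Sigma_\varepsilon$, fed by three structural facts: (i) the flatness of $D^\tau$, whose expansion gives $d\psi_1 = 0$ and $d\psi_2 = -[\psi_1\wedge\psi_1]$; (ii) the meromorphicity of $\vartheta_1,\vartheta_2$, which gives $d\vartheta_i = 0$ on $\Sigma_\varepsilon$ and forces every purely $(1,0)$ product such as $\tr(\vartheta_1\wedge\vartheta_2)$ and $\tr(\vartheta_1\wedge\vartheta_1)$ to vanish for bidegree reasons on a curve; and (iii) the off-diagonal normalization of $\vartheta_1$, equivalently of the local primitive $F_1^j=\int\vartheta_1$, which renders the self-pairing $1$-form $\tr(F_1^j\,\vartheta_1) = \tfrac12 d\,\tr\!\big((F_1^j)^2\big)$ exact and hence residue-free. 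Expanding $\tr(\psi_1\wedge\psi_2)$ and repeatedly transferring $d$ off the gauge factors $\xi_1,\xi_2$ converts the integral into bulk terms plus boundary integrals over the circles $C(p_j,\varepsilon)$, with the orientation-reversal $\partial\Sigma_\varepsilon = -\sum_j C(p_j,\varepsilon)$ accounting for the overall sign.

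Next I would organize the terms. Fact (ii) kills the $(1,0)\wedge(1,0)$ contributions outright and reduces $2[\vartheta_1,\xi_1]$-type pieces; fact (i) relates the quadratic pieces of $\psi_2$ to $[\psi_1\wedge\psi_1]$; and the surviving cubic, WZW-type bulk integrals $\tr(\xi_1\,\psi_1\wedge\psi_1)$ and $\tr(\vartheta_1\wedge\xi_1\, d\xi_1)$ are reorganized, using (iii) and further integration by parts, so that their only non-cancelling content concentrates at the punctures. Writing $\xi_1 = H_j - F_1^j$ near $p_j$, where $H_j$ is the smooth local primitive of the closed form $\psi_1$ and $F_1^j$ is the meromorphic primitive of $\vartheta_1$, the smooth parts $H_j$ drop out as the circles shrink, leaving only the singular cross term. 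Reading off $\oint_{C(p_j,\varepsilon)}\to 2\pi i\,\Res_{p_j}$ and using $-\tfrac{1}{2\pi i} = \tfrac{i}{2\pi}$ to match prefactors, one arrives at $K = -\sum_{j=1}^n \tr\!\big(\Res_{p_j}(F_1^j\,\vartheta_2)\big)$.

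The main obstacle, and the step meriting the most care, is the bookkeeping of the singular gauge terms $\xi_1,\xi_2$ near each $p_j$: one must verify that their polar parts, propagated through the boundary integrals produced by integration by parts, contribute nothing beyond the single residue $\Res_{p_j}\tr(F_1^j\,\vartheta_2)$, and in particular that the $\xi_2$-dependent boundary terms and the smooth remainders $H_j$ either cancel or vanish in the limit $\varepsilon\to 0$. The off-diagonal assumption on $\vartheta_1$ is precisely what prevents spurious diagonal residues arising from the interaction of $\vartheta_1$ with its own primitive, and is therefore indispensable to isolating the clean cross-term formula.
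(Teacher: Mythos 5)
Your reduction of $K$ to $\tfrac{i}{2\pi}\int_\Sigma\tr(\psi_1\wedge\psi_2)$ and your facts (i)--(iii) are correct, and your overall plan (localize at the apparent singularities via Stokes and residues, exploiting the off-diagonal structure) is the same as the paper's. However, there is a genuine gap at precisely the step you flag as ``the main obstacle'': the cubic terms $-2\xi_1\,d\xi_1+2[\vartheta_1,\xi_1]$ in \eqref{eq:Psi}. You propose to handle them by further integration by parts so that their content ``concentrates at the punctures''; this is never carried out, and it also misdescribes the actual mechanism. The paper's key step is algebraic: since $\vartheta_1$ is off-diagonal, the diagonal part of $\psi_1$ equals $d\xi_1^{\diag}$, which is exact \emph{and} harmonic, hence zero; so $\xi_1^{\diag}$ is constant and can be absorbed into the harmonic gauge by the constant gauge $h=\exp(-\tau\xi_1^{\diag})$. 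After this normalization $\xi_1$ is off-diagonal, so $\xi_1 d\xi_1$ and $[\vartheta_1,\xi_1]$ are \emph{diagonal}-valued and pair to zero under the trace against the off-diagonal $\psi_1$: the cubic terms vanish identically from the pairing, leaving only $\psi_2^{\off}=\vartheta_2^{\off}+d\xi_2^{\off}$. Without this normalization, the cubic terms contribute genuine off-diagonal pieces through the constant $\xi_1^{\diag}$, whose removal requires a separate flatness argument as in the proof of Theorem~\ref{thm:defK}; your sketch supplies neither the normalization nor that argument.

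A second gap concerns the localization itself. You claim that, writing $\xi_1=H_j-F_1^j$ near $p_j$, ``the smooth parts $H_j$ drop out as the circles shrink.'' This is false in general: for a smooth function $H$ and a meromorphic $1$-form with a pole, contour integrals do not vanish in the limit, e.g.\ $\oint_{C(p_j,\varepsilon)}H\,\tfrac{dz}{z}\to 2\pi i\,H(p_j)$, and higher-order poles (which $\vartheta_2$ does have in the applications) pick up derivatives of $H$. The paper avoids all such limiting arguments: it first proves $\Res_{p_j}\vartheta_2^{\off}=0$ (from $\psi_2^{\off}=\vartheta_2^{\off}+d\xi_2^{\off}$ and smoothness of $\psi_2$), so that local meromorphic primitives $F_2^j$ exist, then glues $F_1^j,F_2^j$ by cutoff functions into global objects so that $h_1=\xi_1+F_1$ and $h_2=\xi_2^{\off}+F_2$ are globally \emph{smooth}. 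All smooth terms are then killed exactly by Stokes' theorem on the closed surface (as integrals of exact forms), and the residue theorem is applied only to the purely meromorphic remainder $(\vartheta_1-dF_1)\wedge(\vartheta_2^{\off}-dF_2)$. Your outline can presumably be completed, but only after adding the off-diagonal normalization of $\xi_1$, the vanishing of $\Res_{p_j}\vartheta_2^{\off}$, and a treatment of the boundary terms that does not rely on the incorrect vanishing claim; these are the three ingredients that actually carry the paper's proof.
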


\begin{proof}
By \eqref{eq:Psi}, the diagonal part $d\xi_1^{\diag}$ is harmonic.
Hence $d\xi_1^{\diag}=\psi_1^{\diag}=0$, and $\xi_1^{\diag}$ is constant.
Let
\[
h=\exp(-\tau\xi_1^{\diag}).
\]
Then
\[
gh=\Id+\tau(\xi_1-\xi_1^{\diag})+O(\tau^2).
\]
Since $h$ is constant, $h^{-1}\psi_1 h$ is harmonic and $gh$ is again a harmonic
gauge.
Replacing $g$ by $gh$, we may therefore assume without loss of generality that
$\xi_1$ is off-diagonal.

Then $\xi_1 d\xi_1$ and $[\xi_1,\newPhi_1]$ are diagonal, and by
\eqref{eq:Psi} the off-diagonal parts satisfy
\[
\psi_2^{\off}=d\xi_2^{\off}+\newPhi_2^{\off}.
\]
Since $\psi_2$ is smooth (although not harmonic), we obtain
\[
\int_{C(p_j,\varepsilon)}\newPhi_2^{\off}
=
\int_{C(p_j,\varepsilon)}\psi_2^{\off}
=
O(\varepsilon).
\]
Letting $\varepsilon\to 0$ yields
\[
\Res_{p_j}\newPhi_2^{\off}=0,
\]
so $\newPhi_2^{\off}$ admits a primitive
\[
F_2^j=\int \newPhi_2^{\off},
\]
which defines a meromorphic function in a neighborhood of $p_j$.

Next, we patch the local primitives together using a partition of unity to obtain
globally defined smooth maps $F_1$ and $F_2$.
Let $z=re^{i\theta}$ be a complex coordinate in a neighborhood of $p_j$, and let
$\chi\colon\mathbb{R}\to[0,1]$ be a smooth cutoff function such that $\chi=1$ for
$r\le\varepsilon$ and $\chi=0$ for $r\ge 2\varepsilon$.
Define $\rho_j(z)=\psi(r)$, so that $\rho_j$ is supported in $D(p_j,2\varepsilon)$.
For $k=1,2$, set
\[
F_k=\sum_{j=1}^n \rho_j F_k^j.
\]

Then on $D(p_j,\varepsilon)$ we have
\[
d(\xi_1+F_1)=d\xi_1+\newPhi_1=\psi_1,
\]
which is smooth at $p_j$, so
\[
h_1=\xi_1+F_1
\]
extends smoothly to $\Sigma$.
Similarly, on $D(p_j,\varepsilon)$,
\[
d(\xi_2^{\off}+F_2)=d\xi_2^{\off}+\newPhi_2^{\off}=\psi_2^{\off},
\]
so
\[
h_2=\xi_2^{\off}+F_2
\]
extends smoothly to $\Sigma$.

Since $\psi_1$ is off-diagonal, we obtain on $\Sigma$
\begin{align*}
\tr(\psi_1\wedge\psi_2)
&=\tr\!\left((\newPhi_1+d\xi_1)\wedge(\newPhi_2^{\off}+d\xi_2^{\off})\right)\\
&=\tr\!\left((\newPhi_1-dF_1+dh_1)\wedge(\newPhi_2^{\off}-dF_2+dh_2)\right).
\end{align*}
By Stokes' theorem,
\[
\int_\Sigma\tr\!\left((\newPhi_1-dF_1)\wedge dh_2\right)
=
-\int_\Sigma \tr\,d\!\left((\newPhi_1-dF_1)h_2\right)
=
0,
\]
since $\newPhi_1-dF_1$ is smooth on $\Sigma$.
Similarly,
\[
\int_\Sigma\tr\!\left(dh_1\wedge(\newPhi_2^{\off}-dF_2)\right)
=
\int_\Sigma\tr\,d\!\left(h_1(\newPhi_2^{\off}-dF_2)\right)
=
0,
\]
and
\[
\int_\Sigma\tr(dh_1\wedge dh_2)
=
\int_\Sigma\tr\,d(h_1\,dh_2)
=
0.
\]

Hence, by Stokes' theorem and the residue theorem,
\begin{align*}
K
&=\tfrac{i}{2\pi}\int_\Sigma
\tr\!\left((\newPhi_1-dF_1)\wedge(\newPhi_2^{\off}-dF_2)\right)\\
&=\tfrac{i}{2\pi}\int_\Sigma
\tr\,d\!\left(\newPhi_1 F_2 - F_1\newPhi_2^{\off}-dF_1\,F_2\right)\\
&=-\sum_{j=1}^n
\Res_{p_j}\tr\!\left(F_1^j\newPhi_2^{\off}\right).
\end{align*}
Since $F_1^j$ is off-diagonal, we have
$\tr(F_1^j\newPhi_2^{\diag})=0$, and the result follows.
\end{proof}
\begin{Rem}
The fact that the Wess--Zumino--Witten term vanishes for lattices of rank~$2$ and
that the curvature $K$ can be computed by a residue formula appear to be closely
related.
\end{Rem}
\section{Lawson-type CMC surfaces in $\mathbb{T}^2 \times \mathbb{R}$}
\label{section:lawson}
Let $\Sigma$ be a compact, oriented CMC surface in 
$\mathbb{R}^3/\Gamma,$ where $\Gamma$ is a lattice. Such a surface is called
{\em stable} if it minimizes area up to second order with respect to volume-preserving
variations. Stable periodic CMC surfaces arise naturally as candidates for solutions of the
isoperimetric problem, which seeks area-minimizing surfaces enclosing a prescribed volume;
see \cite{Ros3,HPRR} for background.

It is known that connected, stable, embedded periodic CMC surfaces in $\mathbb{R}^3$ are
topologically rigid: by a result of Ros \cite[Theorem~2]{Ros3}, the genus  of such a surface is bounded by the rank of its period
lattice.
In particular, in the doubly periodic case this bounds the genus by~$2$.
Consequently, Lawson-type surfaces of genus~$2$ represent extremal examples among
embedded doubly periodic CMC surfaces.
We use these surfaces as test cases for the isoperimetric problem in
$\mathbb{T}^2 \times \mathbb{R}$ (see \cite{HPRR} for background on the periodic
isoperimetric problem).
In this section, we describe DPW potentials for Lawson-type CMC surfaces and
compute their enclosed volumes as an application of our generalized Minkowski
formula (Theorem~\ref{Thm:enclosedvolr3}) combined with the curvature formula of
Theorem~\ref{theorem:computeK}.

The Lawson cousins of the Lawson minimal surfaces $\xi_{2,2}$ and $\xi_{3,3}$ in
$\mathbb{S}^3$ give rise to genus 2 CMC surfaces in $\mathbb{T}^2 \times \mathbb{R}$, where
$\mathbb{T}^2$ is respectively a flat equilateral or square torus.
Both belong to one-parameter families of CMC surfaces (see, for example,
\cite{KGB}).
Following Ros~\cite{Ros}, we refer to these as \emph{Lawson-type surfaces}.
The cousins of $\xi_{2,2}$ and $\xi_{3,3}$ are shown in
Figure~\ref{fig:cousin}.

\begin{figure}[b]
\centering
\includegraphics[width=0.35\textwidth]{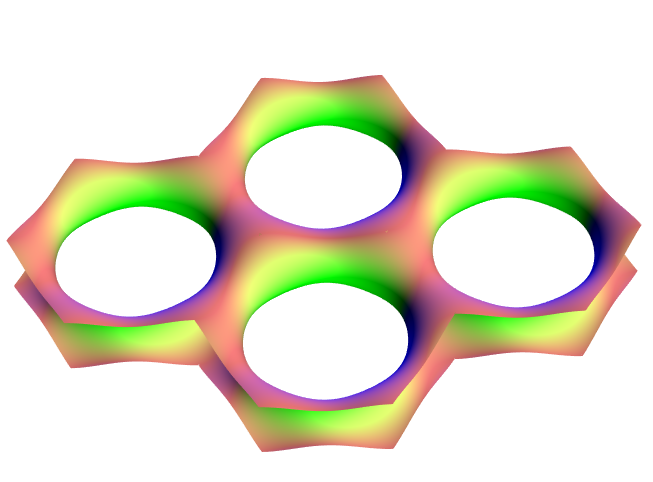}
\hspace{0.75cm}
\includegraphics[width=0.35\textwidth]{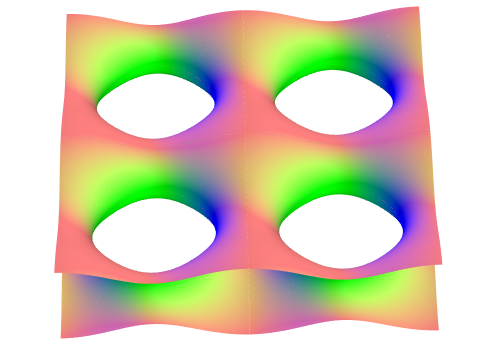}
\hspace{0.25cm}
\caption{%
The conjugate cousin of the Lawson minimal surface $\xi_{2,2}$ in $\mathbb{S}^3$ (left) is a doubly periodic CMC surface in $\mathbb{R}^3$ with hexagonal  period lattice and genus~2 in the quotient.
The conjugate cousin of $\xi_{3,3}$ shown on the right is a doubly periodic CMC surface with square period lattice and genus~2 in the quotient.
The surfaces correspond to the Lawson-type surface \(\Lawson_{3,\tfrac{\pi}{4}}\) and 
\(\Lawson_{4,\tfrac{\pi}{4}}\) in our notation.
Images by the third author using a C++ implementation of the DPW method and the numerical data of Section~\ref{sec:iso}.
}
\label{fig:cousin}
\end{figure}

In this section, we propose DPW potentials for Lawson-type surfaces.
The construction closely follows that of CMC deformations of the Lawson minimal
surfaces $\xi_{1,g}$ in $\mathbb{S}^3$ developed in \cite{HHT2}.
More precisely, we construct a family of CMC surfaces
$\Lawson_{k,\varphi}$ for $k \ge 3$ and $\varphi \in (0,\pi/2)$.
Each surface $\Lawson_{k,\varphi}$ admits a fundamental domain bounded by four
symmetry curves: one lying in a horizontal plane and three lying in vertical
planes meeting at angles $\pi/2$ and $\pi/k$.
When $k \in \{3,4,6\}$, extending the fundamental piece by symmetry yields an
embedded doubly periodic surface;
no further values of $k$ are expected to produce embedded examples,
since only for $k=3,4,6$ do the symmetry angles $\pi/2$ and $\pi/k$ generate a crystallographic
reflection group compatible with a doubly periodic lattice in $\mathbb{R}^3.$
The cases $k=3$ and $k=4$ recover the two Lawson-type families
described above.
For $k = 6$, one obtains an additional family of embedded surfaces of genus 3. As discussed above, these surfaces cannot be stable.
Figure~\ref{fig:cousin5} shows $\Lawson_{6,\tfrac{\pi}{4}}$ which is the conjugate cousin of $\xi_{5,5}$. It was not considered in
\cite{L}.
In general, the parameter $\varphi$ controls the conformal type of the underlying Riemann
surface, as in \cite{HHT2}. Apart from the classical Lawson cousins examples corresponding to $k=3$ and $k=4$
(at $\varphi=\tfrac{\pi}{4}$) and their deformations \cite{KGB}, such families of CMC surfaces in
$\mathbb{T}^2 \times \mathbb{R}$  have  previously been
constructed numerically in \cite{BHS}. 

\begin{figure}[b]
\centering
\includegraphics[width=0.35\textwidth,angle=10]{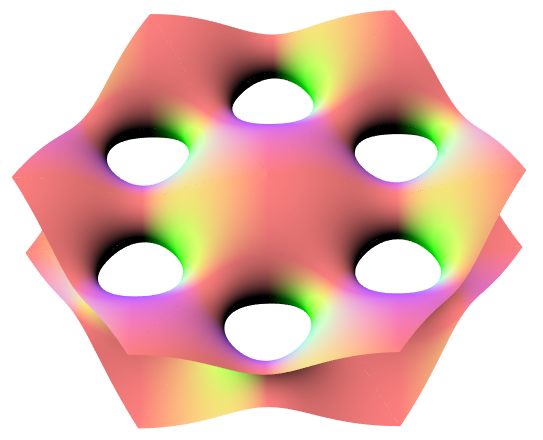}
\caption{%
The conjugate cousin of the Lawson minimal surface $\xi_{5,5}$ in $\mathbb{S}^3$ is a doubly periodic CMC surface with hexagonal period lattice and genus~3 in the quotient.
The surface corresponds to the Lawson-type surface \(\Lawson_{6,\tfrac{\pi}{4}}\)  in our notation.
Image by the third author.
}
\label{fig:cousin5}
\end{figure}

Our approach produces solutions to the monodromy problem rigorously for large values of $k$.
Numerical implementations indicate that the theory continues to behave well down to $k=3$ and $k=4,$ which are the cases of primary interest here, and even to $k=2,$ where one expects to recover the Clifford torus.
In the case $k=2,$ the numerically computed area and enclosed volume agree with the known values for the Clifford torus to within high numerical accuracy.

\subsection{ DPW potentials and closing conditions}\label{sec:dpwlawsonpot}

We fix a parameter $\varphi\in(0,\tfrac{\pi}{2})$ and consider the $4$-punctured
sphere
\[
\Sigma=\mathbb{CP}^1\setminus\{p_1,p_2,p_3,p_4\}
\]
with
\[
p_1=e^{i\varphi},\quad
p_2=-e^{-i\varphi},\quad
p_3=-e^{i\varphi},\quad
p_4=e^{-i\varphi}.
\]
We consider the genus $k-1$ Riemann surface $\Sigma_k$ defined by the algebraic
equation
\begin{equation}\label{eq:algeqyz}
y^k=\frac{(z-p_1)(z-p_2)}{(z-p_3)(z-p_4)}.
\end{equation}
It admits a $k$-fold covering
\[
\pi\colon\Sigma_k\to\mathbb{CP}^1,\qquad (z,y)\mapsto z,
\]
which is totally branched over $p_1,\dots,p_4$.
Note that the Riemann surface underlying the Lawson surface $\xi_{k,k}$ is given
by a $k$-fold covering of $\Sigma_k$; see \cite{HHSch2} for details.

We consider on $\Sigma$ a DPW potential of the form
\[
\eta(z,\lambda)
=
r\,t\sum_{j=1}^4 x_j(\lambda)\,\mathfrak m_j\,\omega_j(z),
\]
where
\[
t=\tfrac{1}{2k},
\]
$r,x_1,x_2,x_3$ are parameters to be determined,
the matrices $\mathfrak m_j$ are
\[
\mathfrak m_1=
\begin{pmatrix} i&0\\0&-i\end{pmatrix},
\qquad
\mathfrak m_2=
\begin{pmatrix} 0&1\\1&0\end{pmatrix},
\qquad
\mathfrak m_3=
\begin{pmatrix} 0&i\\-i&0\end{pmatrix},
\]
and the meromorphic $1$-forms $\omega_j$ are given by
\[
\begin{split}
\omega_1 &=
\left(
\tfrac{1}{z-p_1}-\tfrac{1}{z-p_2}
+\tfrac{1}{z-p_3}-\tfrac{1}{z-p_4}
\right)
=
\tfrac{4 i\sin(2\varphi)\,z}{z^4-2\cos(2\varphi)z^2+1}\,dz,\\
\omega_2 &=
\left(
\tfrac{1}{z-p_1}-\tfrac{1}{z-p_2}
-\tfrac{1}{z-p_3}+\tfrac{1}{z-p_4}
\right)
=
\tfrac{4\cos(\varphi)(z^2-1)}{z^4-2\cos(2\varphi)z^2+1}\,dz,\\
\omega_3 &=
\left(
\tfrac{1}{z-p_1}+\tfrac{1}{z-p_2}
-\tfrac{1}{z-p_3}-\tfrac{1}{z-p_4}
\right)
=
\tfrac{4i\sin(\varphi)(z^2+1)}{z^4-2\cos(2\varphi)z^2+1}\,dz.
\end{split}
\]

The parameters $x_1,x_2,x_3$ are meromorphic functions of $\lambda$ on a disk of
radius $\rho>1$ with simple poles at $\lambda=0$, i.e.,
\[
x_j(\lambda)=\sum_{k=-1}^{\infty}x_{j,k}\lambda^k.
\]
As in \cite{HHT2}, their residues at $\lambda=0$ are
\begin{equation}\label{eq:residue-x}
x_{1,-1}=\tfrac{1}{2},
\qquad
x_{2,-1}=-\tfrac{1}{2}\sin(\varphi),
\qquad
x_{3,-1}=-\tfrac{1}{2}\cos(\varphi).
\end{equation}
Up to scaling and signs, this is the unique choice ensuring that
$\Res_{\lambda=0}\eta$ is nilpotent.
In order to apply the implicit function theorem, we regard the parameters
$x_1,x_2,x_3$ as elements of the Wiener space $\mathcal W_{\mathbb{R},\rho}$ for
some fixed $\rho>1$; see Section~2.1 of \cite{HHT2}.
Equivalently, all coefficients $x_{j,k}$ are real and satisfy
\[
\sum_{k\in\mathbb{Z}} |x_{j,k}|\,\rho^{|k|}<\infty.
\]
Finally, the parameter $r$ is assumed to be a positive real number.

The potential has the following symmetries:
Let
$$
\delta(z)=-z
,\quad
\tau(z)=\frac{1}{z}
\quad\text{ and }\quad
\sigma(z)=\overline{z}.$$
Then
\begin{equation*}
\begin{split}
\delta^*\eta&=D^{-1}\eta D\quad\text{ with }\quad D=\begin{psmallmatrix}i&0\\0&-i\end{psmallmatrix}\\
\tau^*\eta&=C^{-1}\eta C\quad\text{ with }\quad C=\begin{psmallmatrix}0&i\\i&0\end{psmallmatrix}\\
\sigma^*\eta&=\overline{\eta}\quad\text{ where }\quad \overline{\eta}(z,\lambda)=\overline{\eta(z,\overline{\lambda})}.\\
\end{split}
\end{equation*}

\begin{Rem}\label{rem:compareHHT2}
The DPW potential used here is very close to the one in \cite{HHT2}.
In particular, the underlying $4$--punctured sphere and the symmetry assumptions
with respect to $\delta$, $\tau$, and $\sigma$ are identical.

The differences arise at the level of reconstructing and assembling the (complete) surface from the
potential.
First, the branched cover $\Sigma_k$ is defined using a different ordering of the
four punctures, leading to a slightly different realization of the symmetry
group.
Second, although the functions $x_j(\lambda)$ have the same analytic type and the
same residue data at $\lambda=0$ as in \cite{HHT2}, the extrinsic closing
conditions are adapted to the periodic setting by imposing translational
monodromy at the Sym point $\lambda=1$.
As a consequence, some of the $x_j$ acquire additional constant terms at the
initial value.
\end{Rem}

\subsubsection{Formulation of the monodromy problem}\label{sec:monodromyproblem}

In order to obtain a well-defined CMC immersion via the DPW method, one must
ensure that the monodromy of the associated family of flat connections is unitary for all
$\lambda\in S^{1}$.
We reformulate this requirement in the form of the problem
\eqref{eq:monodromy-problem}, which is amenable to the implicit function theorem
and allows the closing conditions to be solved systematically.  The analytic setup for this application of the implicit function theorem,
including the choice of function spaces, the linearization of the monodromy map,
and the verification of the required non-degeneracy conditions, is developed in
Appendix~\ref{section:IFT}.

Let $\Phi_{\mathrm{DPW}}\colon\widetilde\Sigma\to\Lambda SL(2,\mathbb{C})$ denote
the solution of
\[
d\Phi_{\mathrm{DPW}}=\Phi_{\mathrm{DPW}}\eta
\]
with initial condition $\Phi_{\mathrm{DPW}}(z=0)=\Id$.
Let $\gamma_1,\dots,\gamma_4$ be generators of the fundamental group
$\pi_1(\Sigma,0)$, where $\gamma_j$ encloses the singularity $p_j$.
Denote by $M_j=\mathcal M(\Phi_{\mathrm{DPW}},\gamma_j)$ the monodromy of
$\Phi_{\mathrm{DPW}}$ along $\gamma_j$.  Following the notation of \cite{HHT2}, set
\[
\mathcal P=\Phi_{\mathrm{DPW}}(z=1),
\qquad
\mathcal Q=\Phi_{\mathrm{DPW}}(z=i),
\]
and define
\[
L_1=\mathcal Q C D\mathcal Q^{-1},
\qquad
L_2=D,
\qquad
L_3=\mathcal P C\mathcal P^{-1},
\qquad
L_4=-\mathcal P C\mathcal P^{-1} D\mathcal Q C D\mathcal Q^{-1}.
\]
Then, by Equation~(12) in \cite{HHT2}, the monodromies satisfy
\[
M_1=L_4,\quad
M_2=L_1^{-1}L_4 L_1,\quad
M_3=L_2^{-1}L_4 L_2,\quad
M_4=L_2^{-1}L_1^{-1}L_4 L_1 L_2.
\]

The half-trace coordinates are defined by
\[
\mathfrak p=\tfrac12\tr(L_2L_3),
\qquad
\mathfrak q=-\tfrac12\tr(L_1L_2),
\qquad
\mathfrak r=-\tfrac12\tr(L_2L_4).
\]
A direct computation shows that
\begin{equation*}
\begin{split}
L_2L_3&=
\begin{pmatrix}
\mathcal P_{11}\mathcal P_{21}-\mathcal P_{12}\mathcal P_{22}
&
-\mathcal P_{11}^2+\mathcal P_{12}^2\\
-\mathcal P_{21}^2+\mathcal P_{22}^2
&
\mathcal P_{11}\mathcal P_{21}-\mathcal P_{12}\mathcal P_{22}
\end{pmatrix},\\[0.3em]
L_1L_2&=
-i\begin{pmatrix}
\mathcal Q_{11}\mathcal Q_{21}+\mathcal Q_{12}\mathcal Q_{22}
&
\mathcal Q_{11}^2+\mathcal Q_{12}^2\\
\mathcal Q_{21}^2+\mathcal Q_{22}^2
&
\mathcal Q_{11}\mathcal Q_{21}+\mathcal Q_{12}\mathcal Q_{22}
\end{pmatrix},
\end{split}
\end{equation*}
where $\mathcal P=(\mathcal P_{ij})$ and $\mathcal Q=(\mathcal Q_{ij})$.
Hence
\[
\mathfrak p=\mathcal P_{11}\mathcal P_{21}-\mathcal P_{12}\mathcal P_{22},
\qquad
\mathfrak q=i\bigl(\mathcal Q_{11}\mathcal Q_{21}+\mathcal Q_{12}\mathcal Q_{22}\bigr).
\]

Finally, define
\[
\mathcal K:=r^2(-x_1^2+x_2^2+x_3^2)
=
\det\!\left(\tfrac{1}{t}\Res_{p_j}(\eta)\right).
\]
We consider the following problem:
\begin{equation}\label{eq:monodromy-problem}
\begin{cases}
\mathfrak p(\lambda)\ \text{and}\ \mathfrak q(\lambda)\ \text{are real for all }\lambda\in\S^1,\\
\mathfrak q(1)=0,\\
\mathcal K(\lambda)=1\quad\text{for all }\lambda.
\end{cases}
\end{equation}
Since $\mathfrak p$, $\mathfrak q$, and $\mathfrak r$ satisfy a Fricke-type
algebraic relation, no additional constraint on $\mathfrak r$ is required.
Define the Fricke discriminant
\[
\Delta=4(1-\mathfrak p^2)(1-\mathfrak q^2)-4\cos^2(2\pi t).
\]

At the central value $t=0$, the monodromy representation degenerates to an abelian
one.
In this limit, the half--trace coordinates
$(\mathfrak p,\mathfrak q,\mathfrak r)$ and the quantity $\mathcal K$ admit
explicit expressions in terms of the residue data $x_j$.
These expressions are used to solve the monodromy problem for small $t$ via the
implicit function theorem; see Appendix~\ref{section:IFT}.
In Appendix~\ref{proof:prop:monodromy-problem} we prove
(Lemma~\ref{lem:monodromy-problem}) that the solution
$\Phi_{\mathrm{DPW}}$ has unitary monodromy up to conjugation provided that
Problem~\eqref{eq:monodromy-problem} is solved and that there exists
$\lambda_1\in\S^1$ such that $\Delta(\lambda_1)>0$.
In this case, $\mathfrak r$ is real and the monodromy representation lies in the
unitary component.

We can then apply Remark~\ref{rem:lgfac} to construct a CMC surface.
The geometric argument relating the monodromy data to the existence of a
translationally periodic CMC immersion is given in
Appendix~\ref{proof:prop:monodromy-problem}. 
\begin{Pro}
\label{prop:monodromy-problem}
Assume that Problem \eqref{eq:monodromy-problem} is solved, and
assume  that there exists $\lambda_1\in\S^1$ such that $\Delta(\lambda_1)>0$.
Then, 
the pullback $\tilde\eta:=\pi^*\eta$ to $\Sigma_k$ has only apparent singularities at $\pi^{-1}(p_1),\cdots,\pi^{-1}(p_4)$ and produces a conformal CMC immersion $f\colon\Sigma_k\to\R^3/\Gamma$ 
whose monodromy is translational. We denote its image by $\Lawson_{k,\varphi}$.
Furthermore, the area of $\Lawson_{k,\varphi}$ is equal to
$$\mathcal A(f)=8\pi\left(1-r \cos(\varphi)x_{2,0}+r \sin(\varphi) x_{3,0}\right)$$
\end{Pro}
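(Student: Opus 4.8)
The plan is to treat the two assertions separately: first the existence of the CMC immersion with translational periods, which is largely an assembly of the monodromy analysis already in place, and then the area identity, which is the genuinely new computation. For the geometric part I would begin by analysing the singularities of $\tilde\eta=\pi^*\eta$ at the points $q_j\in\pi^{-1}(p_j)$. Each $\omega_1,\omega_2,\omega_3$ has a simple pole at every $p_j$ with residue $\pm1$, so $\Res_{p_j}\eta = rt\,A_j$ with $A_j=\pm x_1\mathfrak m_1\pm x_2\mathfrak m_2\pm x_3\mathfrak m_3$. A direct computation gives $\det A_j = x_1^2-x_2^2-x_3^2=-\mathcal K/r^2$, so the condition $\mathcal K(\lambda)=1$ from Problem~\eqref{eq:monodromy-problem} forces the eigenvalues of $\Res_{p_j}\eta$ to be $\pm t=\pm\tfrac1{2k}$. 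Since $\pi$ is totally branched of order $k$ over each $p_j$, in a local coordinate $w$ with $z-p_j\sim w^k$ one has $\tfrac{dz}{z-p_j}=k\tfrac{dw}{w}+\text{hol}$, whence $\Res_{q_j}\tilde\eta$ has eigenvalues $\pm kt=\pm\tfrac12$. The local $\lambda$-monodromy of $d+\tilde\eta$ around each $q_j$ is therefore conjugate to $-\Id$, which lies in the center and so acts trivially on $\mathfrak{su}(2)\cong\R^3$; the verification that no logarithmic term obstructs this (so that the singularity is genuinely apparent and removable by a local meromorphic gauge) is the resonant local normal-form analysis carried out in Appendix~\ref{proof:prop:monodromy-problem}.

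With the singularities apparent, I would invoke Lemma~\ref{lem:monodromy-problem}: since Problem~\eqref{eq:monodromy-problem} is solved and $\Delta(\lambda_1)>0$ for some $\lambda_1\in\S^1$, the monodromy of $\Phi_{\mathrm{DPW}}$ is unitary up to a fixed conjugation for all $\lambda\in\S^1$, and $\mathfrak q(1)=0$ places it in the component that is trivial at the Sym point $\lambda=1$. Applying the loop-group Iwasawa factorization of Remark~\ref{rem:lgfac} produces the associated family $\nabla^\lambda=(d+\tilde\eta).B$, unitary on $\S^1$ and trivial at $\lambda=1$; the converse direction of Proposition~\ref{Prop:family_from_f} together with the Sym--Bobenko formula of Remark~\ref{Rem:scaling2} then reconstructs a conformal CMC $1$ immersion $f$ on $\Sigma_k$. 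Its periods are read off from the $\lambda$-derivative of the monodromy at $\lambda=1$; by the extrinsic closing condition these span a rank-two lattice $\Gamma\subset\mathfrak{su}(2)$, so $f\colon\Sigma_k\to\R^3/\Gamma$ is translationally periodic, and we set $\Lawson_{k,\varphi}:=f(\Sigma_k)$.

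For the area I would start from Remark~\ref{rem:area-formula}, $\mathcal A(f)=2i\int_{\Sigma_k}\tr(\Phi\wedge\Phi^*)$, and relate $\Phi$ to the potential through the Iwasawa gauge: writing $\Phi_{\mathrm{DPW}}=BF$ with $B=b_0+b_1\lambda+\dots$ holomorphic at $\lambda=0$, the $\lambda^{-1}$-coefficient of $(d+\tilde\eta).B$ gives $\Phi=b_0^{-1}\tilde\eta_{-1}b_0$, where $\tilde\eta_{-1}=\pi^*\bigl(rt\sum_j x_{j,-1}\mathfrak m_j\omega_j\bigr)$ is the nilpotent leading term. Following the method of \cite{HHT2}, I would then convert the surface integral into a contour integral in the spectral parameter: using the flatness of $\nabla^\lambda$ and Stokes' theorem the integral localizes, and the symmetries $\delta,\tau,\sigma$ of the potential collapse the answer to the Laurent data of the $x_j(\lambda)$ at $\lambda=0$. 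The $\lambda^{-1}$ contributions of the surviving combination $-r\cos\varphi\,x_2+r\sin\varphi\,x_3$ cancel, since $x_{2,-1}=-\tfrac12\sin\varphi$ and $x_{3,-1}=-\tfrac12\cos\varphi$, leaving the constant terms $x_{2,0},x_{3,0}$ together with the normalization that produces the overall factor $8\pi$ and the constant term $1$; this yields the stated formula.

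The main obstacle is twofold. On the geometric side it is the verification that the resonant singularities, whose residue eigenvalues differ by the integer $1$, carry no logarithmic term, which is exactly what makes them apparent; this is not automatic and is the reason the detailed local analysis is relegated to the appendix. On the analytic side the real work is the area computation: extracting $\mathcal A(f)$ from $\tr(\Phi\wedge\Phi^*)$ requires controlling the Iwasawa factor $b_0$, equivalently the conformal factor $b_0 b_0^\dagger$, near the punctures and reorganizing the integral into residues in $\lambda$, which is where the structure of \cite{HHT2} and the symmetry reduction are indispensable.
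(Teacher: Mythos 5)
Your proposal follows the same route as the paper's proof: unitarization via Lemma~\ref{lem:monodromy-problem}, pullback to $\Sigma_k$, verification that the punctures are apparent singularities, reconstruction of $f$ by Iwasawa factorization and the Sym--Bobenko formula, and an area computation in the style of \cite{HHT2} (the paper itself only cites Proposition~19 of \cite{HHT2} for that last step, so your sketch there is on par with what the paper provides). However, the two steps that constitute the actual content of the proposition are asserted rather than proved. The first gap concerns the apparent singularities. You correctly compute that $\Res_{q_j}\tilde\eta$ has eigenvalues $\pm\tfrac12$ at each $q_j\in\pi^{-1}(p_j)$, and you correctly identify the resonance problem: since the eigenvalues differ by an integer, a logarithmic term could make the local monodromy a nontrivial Jordan block rather than $-\Id$. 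But you then defer the resolution to ``the resonant local normal-form analysis carried out in Appendix~\ref{proof:prop:monodromy-problem}'' --- that subsection \emph{is} the proof you were asked to reconstruct, so this is circular and the step is simply missing. The paper settles it by an explicit local computation: with $y$ the coordinate at $q_1$ and
\[
G_1=\begin{pmatrix}1&0\\ \kappa&1\end{pmatrix}
\begin{pmatrix}y^{-1/2}&0\\0&y^{1/2}\end{pmatrix},
\qquad
\kappa=\frac{1-irx_1}{r(x_2+ix_3)},
\]
one finds
\[
\tilde\eta.G_1
=
\begin{pmatrix}
0 & \tfrac{r}{2}(x_2+ix_3)\\
\tfrac{\mathcal K-1}{2r(x_2+ix_3)}\,y^{-2} & 0
\end{pmatrix}dy
+O(y^{k-2})\,dy,
\]
so the singular entry is proportional to $\mathcal K-1$ and vanishes precisely because of the constraint $\mathcal K\equiv1$ in Problem~\eqref{eq:monodromy-problem}. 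This is where $\mathcal K=1$ genuinely enters; in your argument it is used only to pin down the residue eigenvalues, which by themselves cannot exclude the logarithmic term.

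The second gap concerns the translational periods. You justify periodicity by appealing to ``the extrinsic closing condition'', but that condition is not a hypothesis of the proposition --- it is exactly what must be deduced from Problem~\eqref{eq:monodromy-problem}. Moreover, your assertion that $\mathfrak q(1)=0$ ``places it in the component that is trivial at the Sym point'' is incorrect on the $4$-punctured sphere: each $M_j(1)$ is a nontrivial element with eigenvalues $e^{\pm i\pi/k}$. Triviality (up to sign) holds only after pullback to $\Sigma_k$, and proving it requires the combinatorial lifting argument: a closed loop $\gamma$ on $\Sigma_k^*$ projects to a word $\prod_{j}\gamma_{i_j}$ with $\sum_j\varepsilon(i_j)\equiv0\pmod k$; by point~(2) of Lemma~\ref{lem:monodromy-problem} one has $M_1(1)=M_2(1)=M_3(1)^{-1}=M_4(1)^{-1}$, and since $M_1(1)$ is unitarizable with eigenvalues $e^{\pm i\pi/k}$ it satisfies $M_1(1)^k=-\Id$; hence the monodromy of $\tilde\Phi_{\mathrm{DPW}}$ along $\gamma$ at $\lambda=1$ equals $M_1(1)^{mk}=\pm\Id$. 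Only this makes every period of the Sym--Bobenko immersion a pure translation, because $\pm\Id$ acts trivially under conjugation and so no rotational component survives in the monodromy of $f$. Without this step, the claim that $f$ descends to $\R^3/\Gamma$ with translational monodromy is unsupported.
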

The proof of the area formula follows the same strategy as in \cite{HHT2} and is
given in Appendix~\ref{section:IFT}, where the argument is adapted to the present
DPW potential.

\subsection{Area and volume formulas}
In this section, we apply Theorem \ref{theorem:computeK} to compute the enclosed volume of the surfaces $\Lawson_{k,\varphi}$ in terms of residues of the DPW potentials. 
Throughout this section, the parameters $x_j$ and $r$ depend smoothly on the
deformation parameter $t$ and are determined as the solution of the monodromy
problem \eqref{eq:monodromy-problem}; for notational simplicity, this dependence
is suppressed.

We first show the following lemma:

\begin{Lem}\label{lem:reducible}
For $\lambda=1$, the triple $(r x_1,r x_2,r x_3)$ satisfies for all small
deformation parameter $t$
\[
(r x_1,r x_2,r x_3)\big|_{\lambda=1}=(0,0,-1).
\]
\end{Lem}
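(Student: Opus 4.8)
The plan is to evaluate the defining monodromy data of the potential $\eta$ at $\lambda=1$, where by construction the surface is required to close up translationally, and to exploit the reducibility that this forces. Recall from \eqref{eq:monodromy-problem} that the Sym point $\lambda=1$ is singled out by the condition $\mathfrak q(1)=0$, together with the reality of $\mathfrak p$ and $\mathfrak q$ and the normalization $\mathcal K\equiv 1$. Since the monodromy at $\lambda=1$ must be trivial (the surface is translationally periodic, so $\nabla^{\lambda=1}=d$ up to conjugation), the associated representation of $\pi_1(\Sigma)$ at $\lambda=1$ degenerates to an abelian, in fact trivial-up-to-translation, representation. This is precisely the situation in which the off-diagonal residue data must collapse.

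First I would pin down the triple $(rx_1,rx_2,rx_3)|_{\lambda=1}$ using the normalization $\mathcal K(\lambda)=1$, which reads $r^2(-x_1^2+x_2^2+x_3^2)=1$ for all $\lambda$, and the residue constraint $\Res_{\lambda=0}\eta$ nilpotent encoded in \eqref{eq:residue-x}. The key algebraic input is that $\Res_{p_j}\eta=r\,t\,x_j(\lambda)\,\mathfrak m_j$ (up to the factor $t$), so the condition for the monodromy at $\lambda=1$ to be reducible is a rank condition on the residue matrix $\sum_j x_j\mathfrak m_j$. Writing $\mathfrak r=-\tfrac12\tr(L_2L_4)$ and invoking the Fricke relation among $(\mathfrak p,\mathfrak q,\mathfrak r)$, I would argue that at $\lambda=1$ the condition $\mathfrak q(1)=0$ together with triviality of the total monodromy forces each local monodromy to be a common-eigenvector (reducible) configuration. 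In the abelian limit the half-trace coordinates linearize in the residue data, and I expect the combination $-x_1^2+x_2^2+x_3^2=1$ together with $\mathfrak q(1)=0$ to pin the direction of $(rx_1,rx_2,rx_3)$ uniquely.

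The cleanest route is to use the symmetries $\delta,\tau,\sigma$ directly. The real symmetry $\sigma^*\eta=\overline\eta$ forces all $rx_j(1)$ to be real; the symmetry $\delta^*\eta=D^{-1}\eta D$ with $D=\diag(i,-i)$ fixes $\mathfrak m_1$ and negates $\mathfrak m_2,\mathfrak m_3$, which constrains how the $x_j$ enter the closing conditions. I would then show that translational closure at $\lambda=1$ is equivalent to the residue matrix $\sum_j (rx_j)\mathfrak m_j$ being, up to sign and conjugation, the fixed nilpotent-adjacent direction $-\mathfrak m_3=\begin{psmallmatrix}0&-i\\i&0\end{psmallmatrix}$, so that $(rx_1,rx_2,rx_3)|_{\lambda=1}=(0,0,-1)$. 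The continuity in $t$ then follows because the solution $(x_j,r)$ of \eqref{eq:monodromy-problem} is obtained via the implicit function theorem (Appendix~\ref{section:IFT}) and depends smoothly on $t$, so the algebraic identity, once verified at the central value $t=0$ and shown to be a consequence of the closing equations, persists for all small $t$.

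The main obstacle I anticipate is disentangling the closing condition at $\lambda=1$ from the full monodromy problem: a priori \eqref{eq:monodromy-problem} only imposes $\mathfrak q(1)=0$, and one must show that the \emph{extrinsic} closing condition (triviality of the monodromy at the Sym point, as in the formulation preceding Proposition~\ref{prop:monodromy-problem}) rigidly fixes all three of $rx_1,rx_2,rx_3$ and not merely one scalar relation among them. Concretely, the difficulty is translating ``the $\lambda$-derivative of the monodromy at $\lambda=1$ spans the translational lattice $\Gamma$'' into the statement that the residue vector points purely in the $\mathfrak m_3$ direction. I expect this to reduce to a short computation with $\mathcal P$, $\mathcal Q$ and the $L_j$ at $\lambda=1$, using that the half-traces $\mathfrak p,\mathfrak q,\mathfrak r$ evaluated there must be compatible with a trivial (identity) global monodromy; the real work is checking that the resulting linear system on $(rx_1,rx_2,rx_3)$ has the unique solution $(0,0,-1)$ rather than a one-parameter family.
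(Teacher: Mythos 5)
Your overall skeleton (reducibility at $\lambda=1$, the normalization $\mathcal K=1$, continuity in $t$ from the central value) agrees with the paper's proof, but the proposal contains a factual error that removes exactly the step that makes the argument work. You write that $\Res_{p_j}\eta = r\,t\,x_j(\lambda)\,\mathfrak m_j$. That is not the residue structure of this potential: each $\omega_j$ has simple poles at \emph{all four} punctures with residues $\pm1$, so the residue at each $p_i$ is a signed combination of all three matrices, e.g.\ $A_1=x_1\mathfrak m_1+x_2\mathfrak m_2+x_3\mathfrak m_3$, $A_2=-x_1\mathfrak m_1-x_2\mathfrak m_2+x_3\mathfrak m_3$, and so on. The paper's proof hinges on precisely this: at $\lambda=1$ the monodromies commute (Lemma~\ref{lem:monodromy-problem}), so the connection $d+\eta$ is fully reducible; by Lemma~10 of \cite{Fuchsian} the potential is then conjugate to a diagonal one, so the four residues $A_1,\dots,A_4$ are \emph{simultaneously} diagonalizable and hence pairwise commute. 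Because the $A_i$ are distinct sign-combinations of the same three matrices, the commutators $[A_1,A_2]$ and $[A_1,A_4]$ have entries proportional to $x_2x_3$, $x_1x_3$, and $x_1x_2$, which forces at least two of $x_1,x_2,x_3$ to vanish at $\lambda=1$. Only then does $\mathcal K=1$ cut the solution set down to the six points $(\pm1,0,0),(0,\pm1,0),(0,0,\pm1)$, after which continuity in $t$ from the explicit value \eqref{eq:central-value} at $t=0$ selects $(0,0,-1)$.

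With your residue structure there is nothing for simultaneous diagonalizability to constrain (each $r\,t\,x_j\mathfrak m_j$ is already a fixed direction), and the substitutes you propose do not close the argument: $\mathcal K(1)=1$ and $\mathfrak q(1)=0$ are two scalar equations on three real unknowns; the $\delta,\tau,\sigma$ symmetries are already built into the ansatz (real Wiener coefficients, fixed pole structure) and yield no further pointwise constraint at $\lambda=1$; and ``the residue matrix equals $-\mathfrak m_3$ up to sign and conjugation'' is vacuous, since any nonzero element of $\mathfrak{su}(2)$ is conjugate to a multiple of $\mathfrak m_3$ --- the lemma asserts equality in the given frame. A further imprecision: on the $4$-punctured sphere the monodromy at $\lambda=1$ is not trivial but only abelian (the $M_j(1)$ coincide up to inversion, with $M_1(1)^k=-\Id$); triviality holds only after pullback to $\Sigma_k$. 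You in fact flag the essential gap yourself at the end (``the real work is checking that the resulting linear system \dots has the unique solution \dots rather than a one-parameter family''); note the relevant system is quadratic, not linear, and that unresolved step is exactly the pairwise-commutator computation above. As it stands, the proposal has a genuine gap.
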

\begin{proof}
By Lemma~\ref{lem:monodromy-problem} in
Appendix~\ref{proof:prop:monodromy-problem}, all monodromies $M_j$ commute at
$\lambda=1$.
Hence, their common eigenspaces define two line bundles over $\Sigma$ which are
parallel with respect to $\nabla=d+\eta$.
In particular, the connection $\nabla$ is fully reducible.

By Lemma~10 in \cite{Fuchsian}, it follows that $\eta$ is conjugate, by a constant
matrix, to a diagonal potential.
Equivalently, the residues
\[
A_j := \tfrac{1}{rt}\Res_{p_j}\eta, \qquad j=1,\dots,4,
\]
are simultaneously diagonalizable and therefore commute.

On the other hand, we compute
\[
[A_1,A_2]
=
\begin{pmatrix}
-4i x_2 x_3 & -4 x_1 x_3 \\
-4 x_1 x_3 & 4i x_2 x_3
\end{pmatrix},
\qquad
[A_1,A_4]
=
\begin{pmatrix}
4i x_2 x_3 & 4i x_1 x_2 \\
-4i x_1 x_2 & -4i x_2 x_3
\end{pmatrix}.
\]
Thus, at least two of the quantities $x_1,x_2,x_3$ must vanish.
Since $\mathcal K=1$, this implies
\[
(rx_1,rx_2,rx_3)\big|_{\lambda=1}
\in
\{(\pm1,0,0),(0,\pm1,0),(0,0,\pm1)\}.
\]
The claim now follows by continuity in the deformation parameter $t$, using the
values of $x_1,x_2,x_3$ and $r$ at $t=0$ given in
\eqref{eq:central-value}.
\end{proof}

Write $\lambda=e^{i\tau}$.
By Lemma~\ref{lem:reducible} and the symmetry assumptions imposed on the DPW
potential~$\eta$, we may expand $r x_1$, $r x_2$, and $r x_3$ at $\tau =0$ as
\begin{equation*}
\begin{split}
r x_1(e^{i\tau})&=\tau a_1+\tfrac{\tau^2}{2} a_2+O(\tau^3),\\
r x_2(e^{i\tau})&=\tau b_1+\tfrac{\tau^2}{2} b_2+O(\tau^3),\\
r x_3(e^{i\tau})&=-1+O(\tau^2).
\end{split}
\end{equation*}
Recall from Proposition~\ref{prop:monodromy-problem} that the area of a CMC
surface can be computed in terms of residues of its DPW potential.
The following theorem shows that the enclosed volume of the surfaces
$\Lawson_{k,\varphi}$ can likewise be computed from residue data of the DPW
potential.

\begin{The}\label{the:volume-lawson}
The enclosed volume of $\Lawson_{k,\varphi}$ is given by
\[
\mathcal V(f)
=
\tfrac{1}{3}\bigl(\mathcal A(f)-4\pi i\,(a_1 b_2-a_2 b_1)\bigr).
\]
\end{The}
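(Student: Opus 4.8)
The plan is to combine the generalized Minkowski formula with the residue formula for $K$. Since $\Lawson_{k,\varphi}$ lies in $\mathbb T^2\times\R$, its period lattice has rank~$2$, so the rank-$\leq 2$ case of Theorem~\ref{Thm:enclosedvolr3} applies and gives $K=-\tfrac{i}{2\pi}\mathcal A(f)+\tfrac{3i}{2\pi}\mathcal V(f)$. Solving for the volume yields
\[
\mathcal V(f)=\tfrac13\mathcal A(f)-\tfrac{2\pi i}{3}K ,
\]
so the theorem is equivalent to the single identity $K=2(a_1b_2-a_2b_1)$, and the whole argument reduces to computing the curvature $K$ from the DPW data.

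To compute $K$, I would first bring the potential into the normal form required by Theorem~\ref{theorem:computeK}. Conjugating by the constant matrix $S$ that diagonalizes $\mathfrak m_3$, one checks that $S^{-1}\mathfrak m_1 S$ and $S^{-1}\mathfrak m_2 S$ become off-diagonal while $S^{-1}\mathfrak m_3 S=\diag(1,-1)$; this is the constant $\mathrm{SU}(2)$-conjugation placing the period lattice in the off-diagonal subspace. By Lemma~\ref{lem:reducible} we have $(rx_1,rx_2,rx_3)\big|_{\lambda=1}=(0,0,-1)$, so in this frame $\eta|_{\lambda=1}$ is diagonal, and gauging by the inverse of the (diagonal) solution $\Phi_{\mathrm{DPW}}^{\lambda=1}$ achieves $\eta^{\lambda=1}=0$ without destroying off-diagonality of the first-order term. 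Feeding in the $\tau$-expansions of $rx_1,rx_2,rx_3$, the gauged potential has $\newPhi_1$ off-diagonal, assembled from $a_1\omega_1$ and $b_1\omega_2$, while $\newPhi_2$ splits into an off-diagonal part built from $a_2\omega_1,b_2\omega_2$ (conjugated by the diagonal gauge) and a diagonal $\omega_3$-part which drops out of $\tr(F_1^j\newPhi_2)$ precisely because $F_1^j$ is off-diagonal.

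The core computation then applies Theorem~\ref{theorem:computeK} on the compact cover $\Sigma_k$, summing residues at the four apparent singularities $\pi^{-1}(p_j)$. The decisive mechanism is the interplay between the total branching of order $k$ at each $p_j$, the normalization $t=\tfrac{1}{2k}$, and the diagonal gauge factor $e^{\pm 2tG}$ with $G=\int\pi^*\omega_3$: since $\Res_{p_j}\omega_3=\pm1$, one has $G\sim k(\Res_{p_j}\omega_3)\log w$ in the local coordinate $w$ with $z-p_j\sim w^{k}$, so $e^{\mp 2tG}\sim w^{\pm(\Res_{p_j}\omega_3)}=w^{\pm1}$. This is exactly what renders $\newPhi_1$ residue-free and produces simple poles in the scalar one-form $\tr(F_1^j\newPhi_2)$. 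Reading off the $w^{-1}$ coefficients from the partial-fraction residues $\Res_{p_j}\omega_1,\Res_{p_j}\omega_2,\Res_{p_j}\omega_3\in\{\pm1\}$, each of the four points contributes $-\tfrac12(a_1b_2-a_2b_1)$, whence
\[
K=-\sum_{j=1}^{4}\tr\Res_{\pi^{-1}(p_j)}\!\bigl(F_1^j\newPhi_2\bigr)=2(a_1b_2-a_2b_1),
\]
which together with the reduction above gives the stated volume formula.

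The main obstacle I expect is the bookkeeping in this last step: one must track the non-leading terms of the local expansions to be certain that the full $w^{-1}$ coefficient of each trace is captured, and verify independence of the answer from the additive constants in the primitives $F_1^j$. The latter follows from $\Res_{p_j}\newPhi_2^{\off}=0$ (established in the proof of Theorem~\ref{theorem:computeK}): shifting $F_1^j$ by a constant $c$ changes the residue by $\tr(c\,\Res_{p_j}\newPhi_2)=0$, so one may fix the constants to annihilate the holomorphic parts of $F_1^j$ at each $p_j$, after which only the leading products survive. Finally, the equality of the four contributions is guaranteed by the $\delta,\tau,\sigma$ symmetries of $\eta$, which permute the punctures while preserving the antisymmetric combination $a_1b_2-a_2b_1$.
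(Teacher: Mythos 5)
Your proposal is correct and follows essentially the same route as the paper: reduce via the rank-$2$ case of Theorem~\ref{Thm:enclosedvolr3} to showing $K=2(a_1b_2-a_2b_1)$, then apply Theorem~\ref{theorem:computeK} after gauging $\pi^*\eta$ on $\Sigma_k$ into off-diagonal normal form and summing the four equal residue contributions $\tfrac12(a_2b_1-a_1b_2)$. Your two-step gauge (constant conjugation diagonalizing $\mathfrak m_3$, then gauging by the inverse of the diagonal $\lambda=1$ solution $\diag(y^{-1/2},y^{1/2})$) is exactly the paper's single gauge $g_0$, and your expansions, signs, and the well-definedness remark about the additive constants in $F_1^j$ all check out.
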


\begin{proof}
Let $y$ be the coordinate of $\Sigma_k$ as in \eqref{eq:algeqyz}.
Consider the gauge
\[
g_0
=
\tfrac{1}{\sqrt{2i}}
\begin{pmatrix} i&-i\\1&1\end{pmatrix}
\begin{pmatrix}y^{1/2}&0\\0&y^{-1/2}\end{pmatrix}.
\]
Although $y^{1/2}$ is multivalued, the gauge action of $g_0$ is well defined on
$\Sigma_k$. A direct computation shows that $\tilde\eta=\pi^*\eta$ satisfies
\[
(d+\tilde\eta).g_0
=
d
+\tfrac{1}{2k}r x_1\,\pi^*\omega_1
\begin{pmatrix}0&-i y^{-1}\\-i y&0\end{pmatrix}
+\tfrac{1}{2k}r x_2\,\pi^*\omega_2
\begin{pmatrix}0&-i y^{-1}\\i y&0\end{pmatrix}
+\tfrac{1}{2k}(r x_3+1)\,\pi^*\omega_3
\begin{pmatrix}1&0\\0&-1\end{pmatrix}.
\]

By Lemma~\ref{lem:reducible}, we have $(d+\tilde\eta).g_0=d$ at $\lambda=1$.
Thus we may expand $\eta$, defined by $d+\eta=(d+\tilde\eta).g_0$, as in
\eqref{eq:eta-exp}, with
\[
\newPhi_1
=
\tfrac{1}{2k}a_1\,\omega_1
\begin{pmatrix}0&-i y^{-1}\\-i y&0\end{pmatrix}
+\tfrac{1}{2k}b_1\,\omega_2
\begin{pmatrix}0&-i y^{-1}\\i y&0\end{pmatrix},
\]
and
\[
\newPhi_2
=
\tfrac{1}{2k}a_2\,\omega_1
\begin{pmatrix}\ast&-i y^{-1}\\-i y&\ast\end{pmatrix}
+\tfrac{1}{2k}b_2\,\omega_2
\begin{pmatrix}\ast&-i y^{-1}\\i y&\ast\end{pmatrix}.
\]
Here the starred entries are irrelevant for the residue computation, since
$\newPhi_1$ and $F_1^j$ are off-diagonal.

Note that $\newPhi_1$ is off-diagonal, so Theorem~\ref{theorem:computeK} applies.
In a neighborhood of $p_1$, we use $y$ as a local coordinate.
We have
\[
\tfrac{1}{2k}\omega_1=\frac{1}{2y}\,dy+O(y^{k-1})\,dy,
\qquad
\tfrac{1}{2k}\omega_2=\frac{1}{2y}\,dy+O(y^{k-1})\,dy,
\]
and hence
\[
\newPhi_1
=
\begin{pmatrix}
0&(-i a_1-i b_1)y^{-2}\\
- i a_1+i b_1&0
\end{pmatrix}
\tfrac{dy}{2}
+O(y^{k-2})\,dy.
\]
A primitive of $\newPhi_1$ is given by
\[
F_1^1
=
\tfrac{1}{2}
\begin{pmatrix}
0&(i a_1+i b_1)y^{-1}\\
(-i a_1+i b_1)y&0
\end{pmatrix}
+O(y^{k-1}),
\]
and similarly
\[
\newPhi_2
=
\begin{pmatrix}
\ast&(-i a_2-i b_2)y^{-2}\\
- i a_2+i b_2&\ast
\end{pmatrix}
\tfrac{dy}{2}
+O(y^{k-2})\,dy.
\]
Therefore, at $p_1$ we obtain
\[
\Res_{p_1}\tr\bigl(F_1^1\newPhi_2\bigr)
=
\tfrac{1}{2}(a_2 b_1-a_1 b_2).
\]

An analogous computation at $p_2,p_3,p_4$ yields
\[
\Res_{p_j}\tr\bigl(F_1^j\newPhi_2\bigr)
=
\tfrac{1}{2}(a_2 b_1-a_1 b_2),
\qquad j=2,3,4.
\]
Collecting all contributions, Theorem~\ref{theorem:computeK} gives
\[
K=2(a_1 b_2-a_2 b_1).
\]
Substituting this expression for $K$ into
Theorem~\ref{Thm:enclosedvolr3} yields the stated formula for $\mathcal V(f)$.
\end{proof}

\section{A counterexample to the isoperimetric problem in $\mathbb{T}^2\times\mathbb{R}$}
\label{sec:iso}
\begin{figure}[b]
\includegraphics[width=6.5cm]{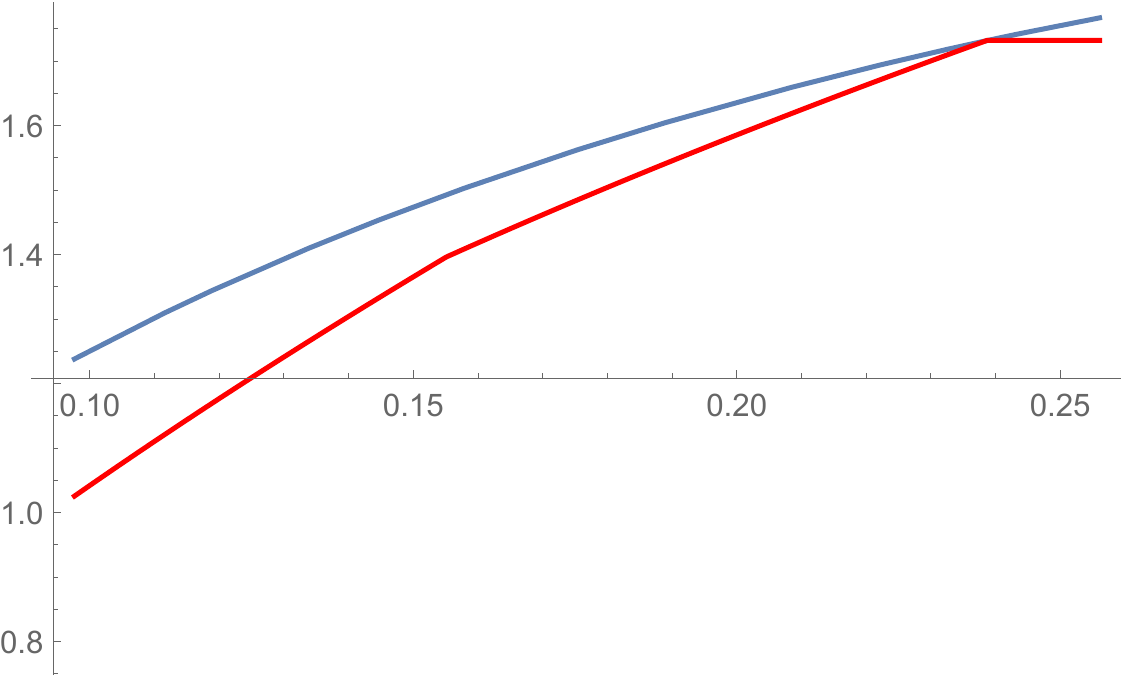}
\hspace{1cm}
\includegraphics[width=6.5cm]{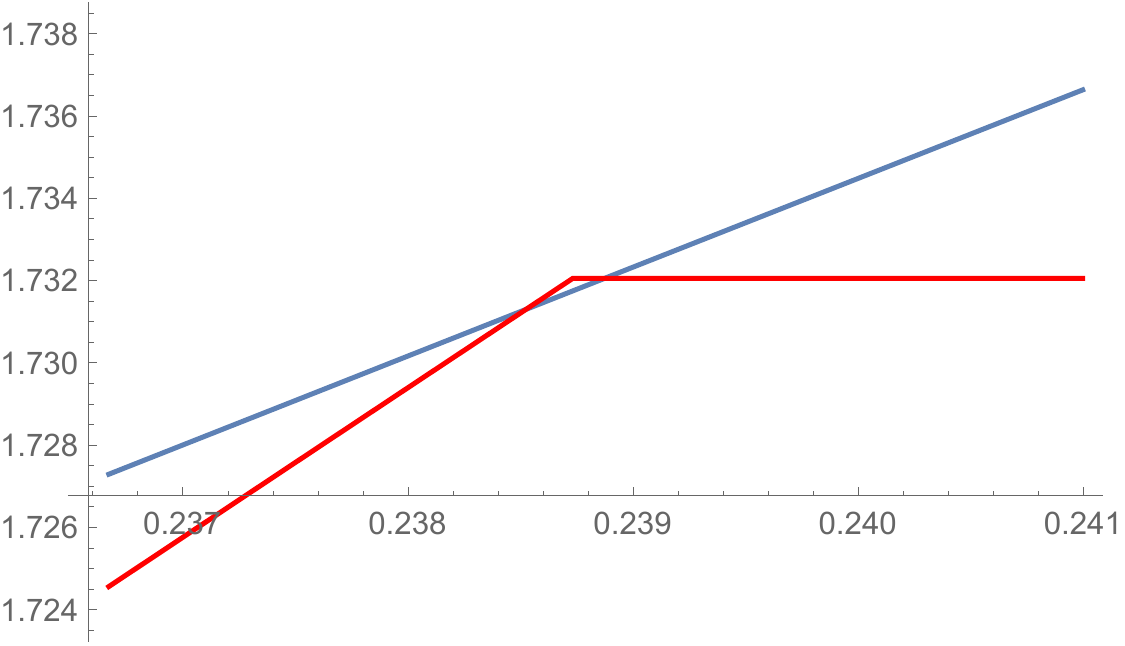}
\caption{
Comparison of isoperimetric profiles in $\mathbb{T}^2\times\mathbb{R}$ for the
equilateral torus.
The figure plots the area as a function of enclosed volume for the (rescaled) Lawson-type
family $\Lawson_{3,\varphi}$ (blue curves) and for the conjectured isoperimetric
minimizers -- spheres, cylinders, and pairs of horizontal planes (red curves).
Near the critical volume $\tfrac{3}{4\pi}$, corresponding to the transition from
cylinders to planes, the Lawson-type family attains  smaller area than
the conjectured competitors, providing a counterexample to the
isoperimetric conjecture of \cite{HPRR}.
}\label{fig:isoperimetric1}
\end{figure}

Let $\mathbb{T}^2$ be a flat $2$--torus.
Hauswirth, P\'erez, Romon, and Ros \cite{HPRR} proposed the following conjecture.

\begin{Con}\label{conj:HPRR}
The only solutions of the isoperimetric problem in
$\mathbb{T}^2\times\mathbb{R}$ are spheres, cylinders, and pairs of horizontal
planes.
\end{Con}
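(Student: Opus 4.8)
The statement we must address is a conjecture that the preceding machinery is designed to \emph{refute}, so what follows is a plan for producing an explicit counterexample rather than a proof of the conjecture. The strategy is to pit the Lawson-type family $\Lawson_{3,\varphi}$ on the equilateral (hexagonal) torus against the three conjectured minimizers, and to exhibit a single enclosed volume at which a member of the family strictly undercuts all of them; since the statement concerns an arbitrary flat $\mathbb{T}^2$, defeating it for the equilateral torus suffices. The key logical point is that the isoperimetric profile $I(V):=\inf\{\mathcal A : \text{the boundary encloses volume } V\}$ satisfies $I(V)\le\mathcal A(f)$ for \emph{any} admissible $f$, so a single strict inequality $\mathcal A(\Lawson_{3,\varphi})<\min\{\mathcal A_{\mathrm{sph}},\mathcal A_{\mathrm{cyl}},\mathcal A_{\mathrm{pl}}\}$ at a common volume already contradicts Conjecture~\ref{conj:HPRR}; no determination of the actual minimizer is needed.

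First I would assemble the data of the family. For each $\varphi\in(0,\tfrac{\pi}{2})$ one solves the monodromy problem~\eqref{eq:monodromy-problem} at $k=3$ (rigorously for large $k$ via the implicit function theorem of Appendix~\ref{section:IFT}, and numerically at $k=3$ itself), obtaining the embedded genus-$2$ surface of Proposition~\ref{prop:monodromy-problem} together with its residue data. Its area is read off from Proposition~\ref{prop:monodromy-problem} and its enclosed volume from Theorem~\ref{the:volume-lawson}, namely $\mathcal V(f)=\tfrac13\bigl(\mathcal A(f)-4\pi i(a_1b_2-a_2b_1)\bigr)$; for an embedded surface with a rank-$2$ period lattice this algebraic volume coincides, in the normalization of Corollary~\ref{Cor:planarvol}, with the geometric volume of the enclosed region. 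Since the surfaces carry $H=1$ and live in $\mathbb{R}^3/\Gamma(\varphi)$ with a hexagonal lattice whose scale depends on $\varphi$, I would then rescale by the factor $s(\varphi)$ that normalizes $\Gamma(\varphi)$ to the fixed torus $\mathbb{T}^2$ of the problem, so that $\mathcal A\mapsto s^2\mathcal A$ and $\mathcal V\mapsto s^3\mathcal V$. This yields a one-parameter curve $\varphi\mapsto(s^3\mathcal V,\,s^2\mathcal A)$ in the volume--area plane, the blue curve of Figure~\ref{fig:isoperimetric1}.

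Next I would compute the competitor profiles in the \emph{same} normalized $\mathbb{T}^2\times\mathbb{R}$: spheres give $\mathcal A=(36\pi)^{1/3}V^{2/3}$, the cylinders give a one-parameter branch parametrized by radius, and the pair of horizontal planes gives the constant area $2\,\mathrm{Area}(\mathbb{T}^2)$. The lower envelope of these three is the conjectured profile, and the cylinder-to-plane transition occurs at the critical volume $\tfrac{3}{4\pi}$, where this envelope has a corner. The decisive step is to verify, with sufficient quantitative control, that near $V=\tfrac{3}{4\pi}$ the rescaled Lawson curve lies strictly below the envelope. The corner in the competitor profile is exactly where a smooth CMC family has the best chance of slipping underneath, which is why this volume is targeted.

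The main obstacle is precisely this last, quantitative comparison. Everything upstream is either rigorous (the volume formula of Theorem~\ref{the:volume-lawson}, the area formula, and the large-$k$ construction of Appendix~\ref{section:IFT}) or numerically robust, but at the primary case $k=3$ the solution of~\eqref{eq:monodromy-problem}, and hence the numbers $x_{j,0}$, $a_1,a_2,b_1,b_2$, and the scale $s(\varphi)$, is obtained numerically; the strict inequality near the transition volume must therefore be certified with a margin exceeding the numerical error. One must also confirm that the relevant members are genuinely embedded and separate a region of the asserted volume, so that the algebraic volume of Theorem~\ref{the:volume-lawson} is the honest isoperimetric volume, and that the sign and normalization conventions match between the gauge-theoretic $\mathcal V(f)$ and the geometric volume in $\mathbb{T}^2\times\mathbb{R}$. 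Once a single $\varphi$ with a certified strict gain is produced, Conjecture~\ref{conj:HPRR} is disproved.
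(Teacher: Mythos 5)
Your overall strategy coincides with the paper's: use the Lawson-type family $\Lawson_{3,\varphi}$ on the equilateral torus, compute its area via Proposition~\ref{prop:monodromy-problem} and its enclosed volume via Theorem~\ref{the:volume-lawson}, rescale to the normalized hexagonal lattice, and undercut the competitor profile near the critical volume $\tfrac{3}{4\pi}$, using the elementary observation that any single admissible surface with strictly smaller area at a given volume refutes Conjecture~\ref{conj:HPRR}. However, you correctly identify the decisive obstacle --- that at $k=3$ the monodromy problem is solved only numerically, so the strict inequality must be certified with a margin exceeding the numerical error --- and then leave it unresolved. This is exactly the step where the paper does something you do not propose: rather than attempting to certify the smooth DPW surface's gauge-theoretic area and volume (which inherit all the uncertainty of the numerically solved monodromy problem, the Iwasawa factorization, and the residue data $a_1,a_2,b_1,b_2$), the paper takes the numerical DPW data only as a \emph{recipe} for producing an explicit triangulated surface in $\mathbb{T}^2\times\mathbb{R}$ (Section~\ref{sec:ce}). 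That polyhedral surface is itself an admissible isoperimetric competitor, and its area and enclosed volume are elementary finite sums over the triangle data (triangle areas, and volumes of vertical prisms over a reflection plane), so the comparison $\mathcal A_{\mathrm{tri}}\simeq 1.731823 < 1.73193$ at the matched volume is a direct computation on explicit data, independent of whether the numerics solved the monodromy problem exactly.

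This device also forces a quantitative consideration absent from your plan: triangulating a stable CMC surface slightly increases area and decreases enclosed volume, so the smooth surface's advantage (about $3\times10^{-4}$ in \eqref{eq:theodpwva}) must survive the discretization, which the paper verifies with $2236$ triangles, retaining a gain of roughly $1.2\times10^{-4}$ over the cylinder. Without this (or some comparable certification scheme, e.g.\ interval arithmetic through the whole DPW pipeline, which would be far more delicate), your proposal remains a numerically supported plan rather than a disproof: the blue curve of Figure~\ref{fig:isoperimetric1} by itself does not distinguish a true counterexample from an artifact of uncertified numerics. A secondary, smaller gap is that you invoke embeddedness and the identification of the algebraic volume of Theorem~\ref{the:volume-lawson} with the geometric enclosed volume; in the paper this identification is made concrete precisely by the triangulated model, whose enclosed region and volume are unambiguous.
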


In the following, we normalize all period lattices so that the shortest vectors have length 1. All areas and volumes are computed with respect to this normalization.

 We now exhibit a counterexample to this conjecture in the case of the
equilateral torus
\[
\mathbb{T}^2=\mathbb{C}/\langle 1,e^{i\pi/3}\rangle.
\]
Specifically, we first consider numerical examples of the Lawson-type genus-2 surfaces
\(\Lawson_{3,\varphi}\) and rescale them accordingly. 
The resulting period lattice is the normalized equilateral lattice
$\Gamma=\langle 1,e^{i\pi/3}\rangle$. 
Later, using a sufficiently fine triangulation of the resulting surface, this will yield a genuine counterexample.

Figure~\ref{fig:isoperimetric1} compares the isoperimetric profiles of the (rescaled)
$\Lawson_{3,\varphi}$ family with those of the conjectured minimizers (spheres,
cylinders, and pairs of planes). 
The figure shows that the Lawson-type family $\Lawson_{3,\varphi}$ achieves strictly lower area than
the conjectured competitors for the same enclosed volume, in a neighborhood of
the critical volume $\tfrac{3}{4\pi}$.
This value corresponds to the transition from the cylinder to a pair of planes.
More precisely, using our Minkowski formula from Theorem \ref{Thm:enclosedvolr3} combined with Theorem \ref{theorem:computeK},
for the surface $\Lawson_{3,\varphi}$ with $\varphi=1.454838491$ shown in Figure \ref{fig:lawson4-views}, we obtain the normalized values
\begin{equation}\label{eq:theodpwva}
\mathcal V \simeq 0.2387324146 \simeq \tfrac{3}{4\pi}
\qquad\text{and}\qquad
\mathcal A \simeq 1.731745356 < \sqrt{3} \simeq 1.73205081,
\end{equation}
where $\sqrt{3}$ is the area of the conjectured competitors in $\mathbb{T}^2\times\mathbb{R}$ of the same enclosed volume
$ \tfrac{3}{4 \pi}.$


\subsubsection*{Square period lattice}
It is natural to ask the same question for the square torus
$\mathbb{T}^2=\mathbb{C}/\mathbb{Z}^2$.
In this case, a natural competing family is given by $\Lawson_{4,\varphi}$, which have a
square period lattice.
These surfaces have genus~$3$; however, after quotienting by the full period
lattice, the resulting surface has genus~$2$.
As before, we rescale the surfaces so that their shortest period has length~$1$.

Figure~\ref{fig:isoperimetric2} compares the isoperimetric profiles of the rescaled
$\Lawson_{4,\varphi}$ family with those of the conjectured minimizers, namely
spheres, cylinders, and pairs of horizontal planes.
In contrast to the equilateral case, the Lawson-type family
$\Lawson_{4,\varphi}$ does not attain smaller area than the conjectured
minimizers for the same enclosed volume.
This suggests that the conjecture may still hold for the square torus.

At the critical volume $\mathcal V=\tfrac{1}{\pi}$, corresponding to the
transition from cylinders to pairs of planes, we find that the rescaled Lawson-type
surface $\Lawson_{4,\varphi}$ has area
\[
\mathcal A \simeq 2.0353
\]
This value agrees with that computed by Ros \cite{Ros} using Brakke's Surface
Evolver, up to normalization of the period lattice.
Indeed, Ros's convention considers only a fundamental half-domain, and the area
reported in \cite{Ros} is $1.017$, which is half of the value
2.0353 obtained here.

\begin{figure*}[t]
\centering
\begin{minipage}[t]{0.46\textwidth}
\centering
\includegraphics[width=\textwidth]{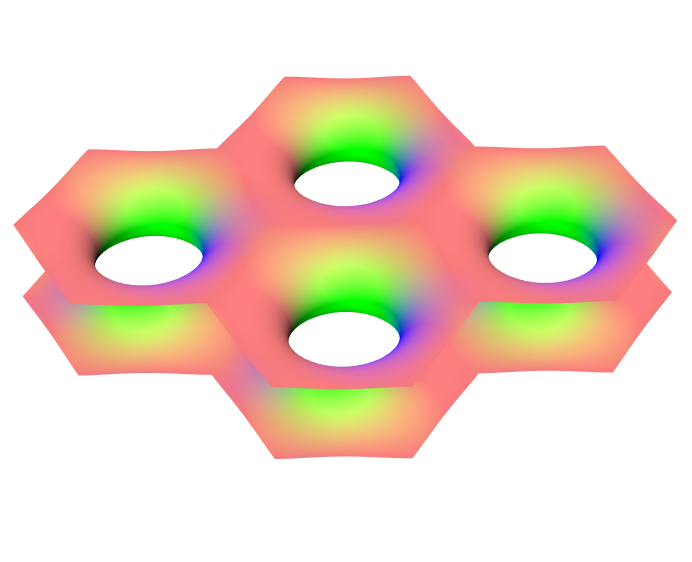}
\end{minipage}\hfill
\begin{minipage}[t]{0.48\textwidth}
\centering
\includegraphics[width=\textwidth]{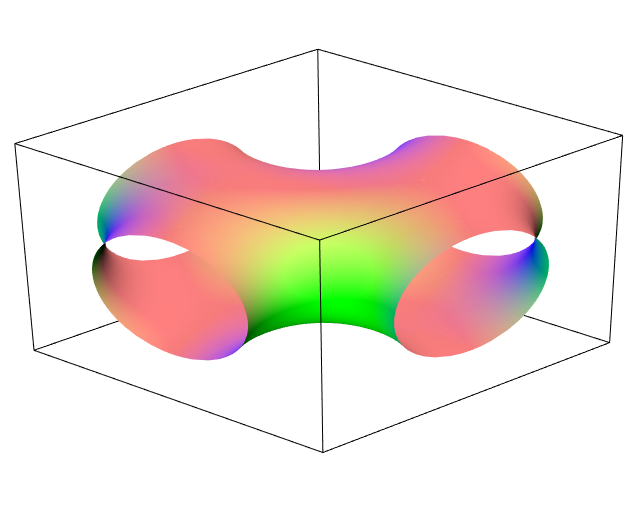}
\end{minipage}
\caption{
Two views of the Lawson-type CMC surface $\Lawson_{3,\varphi}$ at
$\varphi=1.454838491$.
(a) Periodic extension of the surface in $\mathbb{R}^3$.
(b) The same surface restricted to a fundamental domain of the equilateral
lattice together with a finite height in the $\mathbb{R}$-direction. Images by the third author.
}
\label{fig:lawson4-views}
\end{figure*}

\subsection{Numerical aspects}

The Monodromy Problem is solved theoretically for large $k$ in
Appendix~\ref{section:IFT}.
In order to study the families $\Lawson_{3,\varphi}$ and
$\Lawson_{4,\varphi}$, we solve the Monodromy Problem numerically.
In the test case $\Lawson_{2,\varphi}$, i.e. round cylinders with varying radii,  for which both the area and the enclosed
volume are known explicitly, the numerical results agree with the exact values
to high precision, providing confidence in the reliability of our numerical
scheme.

We briefly summarize the numerical procedure.
\begin{itemize}
\item
Functions of the spectral parameter $\lambda$ are expanded into Fourier series
and truncated at some finite order $n$ (typically $n=10$, or $n=20$ for higher
accuracy).

\item
The matrices $\mathcal P$ and $\mathcal Q$ are computed by numerically solving
the ODE
\[
d\Phi_{\mathrm{DPW}}=\Phi_{\mathrm{DPW}}\eta
\]
using a fourth--order Runge--Kutta method.
Typically, $100$ subdivisions are sufficient in practice, while $500$
subdivisions are used for more accurate computations.

\item
The Monodromy Problem is solved using Newton's method.
For $\varphi$ near $\tfrac{\pi}{4}$, we use the central value
\eqref{eq:central-value} as an initial guess.
Typically, Newton's method converges in $5$--$8$ iterations.
For more extreme values of $\varphi$, previously computed solutions are used as
initial values.

\item
We observe that the solutions $(x_1,x_2,x_3,r)$ satisfy
$|x_{j,m}|=O(10^{-m})$, which a posteriori justifies the chosen truncation order
in $\lambda$.

\item
The unitarizing gauge, the period lattice, and the geometric quantities (area
and enclosed volume) are computed numerically using
Proposition~\ref{prop:monodromy-problem} and
Theorem~\ref{the:volume-lawson}.

\item
As a validation test, consider the surface
$\Lawson_{2,\tfrac{\pi}{4}}$, the conjugate cousin of the Clifford torus
$\xi_{1,1}$.
This surface is the flat CMC cylinder of height $\tfrac{h}{2}=\pi$, with exact
area $\pi^2$ and enclosed volume $\tfrac{1}{4}\pi^2$.
For $n=20$ and using Runge--Kutta with $500$ subdivisions, the numerically
computed values for the area and enclosed volume differ from the exact values by
less than $2.5\times10^{-14}$ and $3.2\times10^{-11}$, respectively.

\item
Since the initial value \eqref{eq:central-value} corresponds to the limit
$t\to0$, the numerical scheme performs particularly well for larger values of
$k$.
We therefore expect at least comparable accuracy for $k=3,4$ as in the test case
$k=2$.

\item
All computations were carried out using \emph{Mathematica}.
The corresponding notebook is attached to the {\em arXiv} submission.
\item All triangulations and images were constructed with the potential from Section \ref{sec:dpwlawsonpot} using a C++ implementation of the DPW method.\end{itemize}

\begin{figure}[t]
\centering
\includegraphics[width=6.5cm]{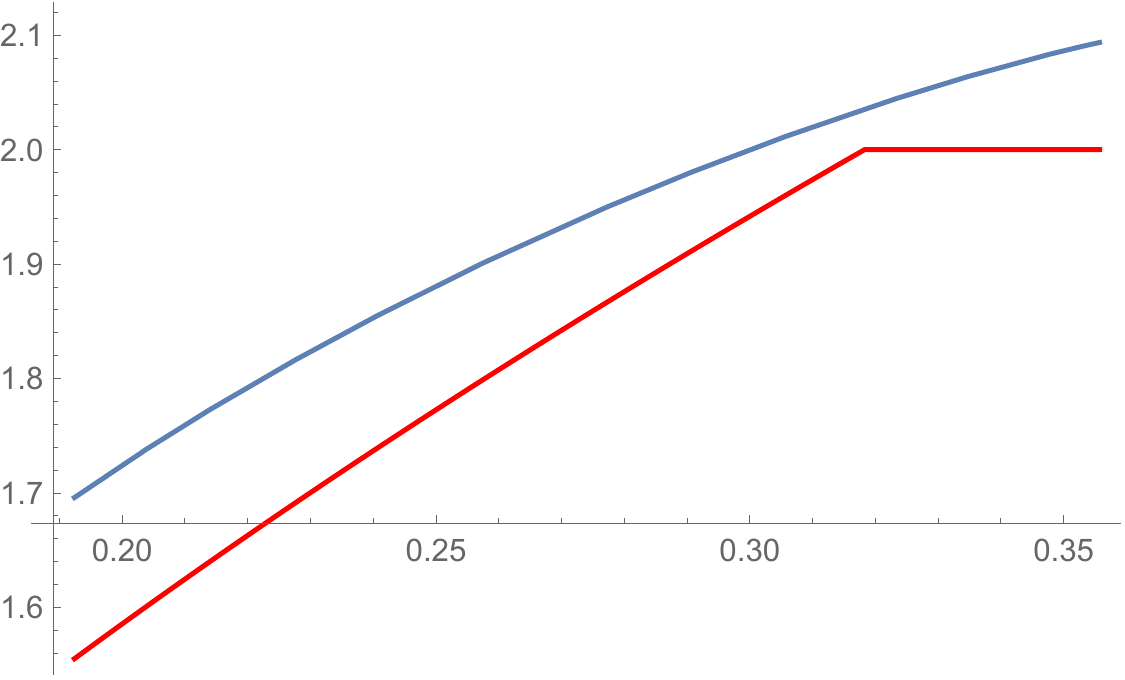}
\caption{
Red: isoperimetric profile $\mathcal V\mapsto\mathcal A$ of the conjectured
minimizers (sphere, cylinder, and pair of planes) in
$\mathbb{T}^2\times\mathbb{R}$, where $\mathbb{T}^2$ is the square torus.
Blue: isoperimetric profile of the $\Lawson_{4,\varphi}$ family for
$\varphi\in[\tfrac{\pi}{6},\tfrac{9\pi}{20}]$.
}
\label{fig:isoperimetric2}
\end{figure}

\subsection{The triangulated counterexample for the hexagonal lattice}\label{sec:ce}
We consider the CMC surface $\Lawson_{3,\varphi}$ with $\varphi = 1.454838491$ 
in $\T^2\times\R$, where $\mathbb{T}^2=\mathbb{C}/\langle 1,e^{i\pi/3}\rangle$; see Figure \ref{fig:lawson4-views}. Applying the DPW method to the numerical DPW data yields an explicit triangulated surface in $\mathbb T^2\times\mathbb R$.
The enclosed volume of the triangulated surface is computed by summing the volumes of
vertical prisms, where each prism has as its top face one triangle of the surface
and as its base the horizontal projection of that triangle onto the reflection
plane of the surface.
The surface area is computed by summing the areas of all triangles in the
triangulation.

The resulting values are in good agreement\footnote{ It is expected that a triangulated approximation of a stable CMC surface has slightly smaller enclosed volume and slightly larger area.} with the values  \eqref{eq:theodpwva}
for our numerical DPW data.
Using a triangulation with $2236$ triangles (for a fundamental 4-gon) we obtain
\[
\mathcal A \simeq 1.731823,
\qquad
\mathcal V \simeq 0.238698.
\]
This triangulated surface has strictly smaller normalized area  than the standard cylindrical competitor 
(of area 1.73193) for the same enclosed volume, by approximately $1.2\times 10^{-4},$ and therefore provides a counterexample to Conjecture~\ref{conj:HPRR}.

\subsubsection*{Quantitative comparison with the standard competitors}
We record more precise numerical information on the range of parameters $\varphi$ for which the Lawson-type surfaces \(\Lawson_{3,\varphi}\) improve upon the standard competitors (spheres, cylinders, and pairs of planes) in the isoperimetric problem in $\mathbb{T}^2 \times \mathbb{R},$ where $\mathbb{T}^2 = \mathbb{C}/\langle 1, e^{i\pi/3} \rangle$ is as above. This occurs for
\[
\varphi \in [\,1.454356,\;1.455165\,].
\]
Here the interval bounds are given with rounding; with this convention, the Lawson-type surfaces
$\Lawson_{3,\varphi}$
strictly outperform the standard competitors  for normalized volumes throughout the closed interval
\[
\mathcal V \in [\,0.238524,\;0.238873\,].
\]



\appendix
\section{Solution of the Monodromy Problem}\label{section:IFT}

This appendix provides the analytic and geometric input needed to solve the
monodromy problem formulated in Section~\ref{sec:monodromyproblem}.
Subsection~\ref{section:IFT1} establishes solvability via the implicit function theorem,
Subsection~\ref{app:monodromy-solvability} analyzes the induced symmetries, and Subsection~\ref{proof:prop:monodromy-problem} relates the
monodromy data to the existence of periodic CMC immersions and gives a proof of Proposition \ref{prop:monodromy-problem}.

\subsection{Functional--analytic setup and linearization}
\label{section:IFT1}

\begin{Pro}
For $t$ sufficiently small, there exists a unique solution
\[
(x_1(t),x_2(t),x_3(t),r(t))
\in(\mathcal W_{\mathbb{R},\rho})^3\times\mathbb{R}
\]
of Problem~\eqref{eq:monodromy-problem}, depending analytically on $t$, and such
that $\Delta(i)>0$.
Moreover, at $t=0$ the solution is given by
\begin{equation}\label{eq:central-value}
x_1(0)=\tfrac{1-\lambda^2}{2\lambda},\quad
x_2(0)=-\sin(\varphi)\tfrac{(\lambda-1)^2}{2\lambda},\quad
x_3(0)=-\tfrac{1}{\cos(\varphi)}-\cos(\varphi)\tfrac{(\lambda-1)^2}{2\lambda},
\quad\text{and}\quad
r(0)=\cos(\varphi).
\end{equation}
\end{Pro}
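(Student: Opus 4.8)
The plan is to solve Problem~\eqref{eq:monodromy-problem} by the analytic implicit function theorem in the Wiener-space framework of \cite[Section~2.1]{HHT2}, regarding $t=\tfrac{1}{2k}$ as a small deformation parameter and the central value \eqref{eq:central-value} as the solution at $t=0$. I would first recast the three conditions as the vanishing of a single analytic map $\mathcal F$ on $(\mathcal W_{\mathbb{R},\rho})^3\times\mathbb{R}$. The normalization $\mathcal K=r^2(-x_1^2+x_2^2+x_3^2)=1$ is algebraic and $t$-independent; a direct computation using $\sin^2\varphi+\cos^2\varphi=1$ and $r(0)=\cos\varphi$ shows that the central value satisfies it identically in $\lambda$. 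The remaining conditions are the reality of $\mathfrak p(\lambda)$ and $\mathfrak q(\lambda)$ on $\mathbb{S}^1$ together with $\mathfrak q(1)=0$; these are functions of the point values $\mathcal P=\Phi_{\mathrm{DPW}}(1)$ and $\mathcal Q=\Phi_{\mathrm{DPW}}(i)$, which depend analytically on $(x_1,x_2,x_3,r,\lambda)$ and on $t$ through the prefactor $rt$, since $\Phi_{\mathrm{DPW}}$ solves $d\Phi_{\mathrm{DPW}}=\Phi_{\mathrm{DPW}}\eta$.

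The essential point, following \cite{HHT2}, is to factor out the leading power of $t$. Writing $\mathcal P=\mathrm{Id}+t\,\mathcal P_1+O(t^2)$ with $\mathcal P_1=r\sum_j x_j\,\mathfrak m_j\int_0^1\omega_j$, and similarly for $\mathcal Q$, one finds $\mathfrak p=t\,\hat{\mathfrak p}+O(t^2)$ and $\mathfrak q=t\,\hat{\mathfrak q}+O(t^2)$, where $\hat{\mathfrak p}$ and $\hat{\mathfrak q}$ are explicit linear combinations of $x_1,x_2,x_3$ weighted by the elementary period integrals of $\omega_1,\omega_2,\omega_3$ along the paths $0\to1$ and $0\to i$. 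Since the closing conditions vanish to first order in $t$, I would define $\mathcal F$ by dividing the reality conditions and $\mathfrak q(1)$ by $t$; the quotients extend analytically to $t=0$, where they reduce to the abelian (linear-algebraic) problem whose solution is precisely the central value \eqref{eq:central-value}. This verifies $\mathcal F(\text{central value};0)=0$.

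The heart of the argument, and the step I expect to be the main obstacle, is to show that the partial differential $D_{(x_1,x_2,x_3,r)}\mathcal F$ at the central value and $t=0$ is a bounded linear isomorphism of the relevant real Banach spaces. Here the symmetries $\delta,\tau,\sigma$ of $\eta$ are indispensable: they constrain $\mathcal P$ and $\mathcal Q$ and split the linearized operator into decoupled pieces, reducing the counting of equations against the parameters $(x_1,x_2,x_3,r)$ to a finite block structure on each Fourier mode. On each block, invertibility reduces to the non-vanishing of explicit elementary integrals of the $\omega_j$, together with the linearized constraint $\delta\mathcal K=0$ relating $\delta r$ to a specific combination of the $\delta x_j$. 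Verifying these non-degeneracies---and that the operator is onto the prescribed target, not merely injective---is the technically delicate part, but it is governed entirely by the abelian $t=0$ data and parallels the corresponding argument in \cite{HHT2}.

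With the isomorphism established, the analytic implicit function theorem yields a unique family $(x_1(t),x_2(t),x_3(t),r(t))\in(\mathcal W_{\mathbb{R},\rho})^3\times\mathbb{R}$, analytic in $t$, solving Problem~\eqref{eq:monodromy-problem} for all sufficiently small $t$ and reducing to \eqref{eq:central-value} at $t=0$. Finally, $\Delta(i)>0$ is an open condition: from the expansion
\[
\Delta=4t^2\bigl((2\pi)^2-\hat{\mathfrak p}^2-\hat{\mathfrak q}^2\bigr)+O(t^3),
\]
it reduces at leading order to an elementary inequality for $\hat{\mathfrak p}(i)$ and $\hat{\mathfrak q}(i)$ evaluated at the central value, and persists for small $t$ by continuity, completing the proof.
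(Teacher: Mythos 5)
Your overall strategy --- rescaling the closing conditions by $t$, checking that the central value solves the $t=0$ limit, and applying the analytic implicit function theorem in the Wiener-space framework of \cite{HHT2} --- is the same as the paper's, but the proposal has two genuine gaps.

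First, the step you defer as ``technically delicate'' is the heart of the proof, and the block structure you envision for it cannot work as described. At $t=0$ one has $\widehat{\mathfrak p}=2\pi r x_3$ and $\widehat{\mathfrak q}=2\pi r x_2$ (the paper imports these formulas from \cite{HHT2} and \cite{HHTSD}; you would have to derive them), so the linearized reality conditions and the normalization $\mathfrak q(1)=0$ involve only $x_2$ and $x_3$: the parameter $x_1$ does not enter them at all. Hence the $x_1$-direction can only be controlled by the equation $\mathcal K=1$, which must be treated as a \emph{functional} equation in the Wiener space, not --- as you phrase it --- as a single linearized constraint ``relating $\delta r$ to a specific combination of the $\delta x_j$''. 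Concretely, the paper writes $\lambda\mathcal K=\mathcal K_{-1}+\lambda\mathcal K_0+(\lambda^2-1)\widehat{\mathcal K}$ with $\widehat{\mathcal K}\in\mathcal W^{\ge 0}_{\mathbb R}$, and verifies that the differential of $(\widehat{\mathcal K},\mathcal K_0,\mathcal K_{-1})$ with respect to $(x_1,r,x_3^0)$ is an automorphism (in upper triangular form), while the differential of $(\mathcal F_1^+,\mathcal F_2^+,\mathcal H)$ with respect to $(x_2,x_3^+)$ handles the remaining directions. With your reading of $\mathcal K=1$ as one scalar equation, the linearization would leave the infinite-dimensional $x_1$-direction uncontrolled and could not be an isomorphism.

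Second, your claim that the $t=0$ abelian problem has ``precisely the central value'' as its solution is false, and this affects both the derivation of \eqref{eq:central-value} and the uniqueness assertion. With only the reality of $\widehat{\mathfrak p},\widehat{\mathfrak q}$, the condition $\widehat{\mathfrak q}(1)=0$, and $\mathcal K=1$, the $t=0$ system admits a one-parameter family of additional solutions (for instance those with $x_1=\tfrac12\lambda^{-1}+a+\tfrac12\lambda$ and suitably adjusted $x_{3,0}$, $r$), precisely because the reality conditions do not see $x_1$. The paper selects the correct branch by adjoining at $t=0$ the extra equation $\widehat{\mathfrak r}-\widehat{\mathfrak r}^{\,*}=0$ (reality of $\mathfrak r$ on $\S^1$), which is legitimate because for $t>0$ any solution with $\Delta(\lambda_1)>0$ is unitarizable by Lemma~\ref{lem:monodromy-problem} and therefore has real $\mathfrak r$; only with this condition is the $t=0$ solution forced to be the degree-one Laurent polynomial data of \eqref{eq:central-value}, after which $\mathcal K=1$ pins down $x_{3,0}$ and $r=\cos(\varphi)$. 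Without this branch selection you have neither computed the central value nor justified the uniqueness of the solution with $\Delta(i)>0$. (Your final expansion of $\Delta$ and the positivity check at the central value are correct, modulo the unproven $t=0$ formulas for $\widehat{\mathfrak p}$ and $\widehat{\mathfrak q}$.)
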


\begin{proof}
At $t=0$ the potential vanishes, $\eta=0$, and hence
$\Phi_{\mathrm{DPW}}=\Id$.
It follows that $\mathfrak p=\mathfrak q=\mathfrak r=0$.
Therefore the rescaled quantities
\[
\widehat{\mathfrak p}:=\tfrac{1}{t}\mathfrak p,\qquad
\widehat{\mathfrak q}:=\tfrac{1}{t}\mathfrak q,\qquad
\widehat{\mathfrak r}:=\tfrac{1}{t}\mathfrak r
\]
extend analytically to $t=0$.

With this notation, Problem~\eqref{eq:monodromy-problem} is equivalent to the
system
\begin{equation}\label{eq:monodromy-problem2}
\begin{cases}
\mathcal F_1(t,r,x):=\widehat{\mathfrak p}-\widehat{\mathfrak p}^{\,*}=0,\\
\mathcal F_2(t,r,x):=\widehat{\mathfrak q}-\widehat{\mathfrak q}^{\,*}=0,\\
\mathcal H(t,r,x):=\widehat{\mathfrak q}(\lambda=1)=0,\\
\mathcal K(r,x)=1,
\end{cases}
\end{equation}
where the involution $*$ on $\mathcal W$ is defined by
\[
u^*(\lambda)=\overline{u(1/\overline{\lambda})}.
\]

We first solve Problem~\eqref{eq:monodromy-problem2} at $t=0$.
There the system admits several solutions; however, only one of
them satisfies the positivity condition $\Delta(i)>0$.
By Proposition~\ref{prop:monodromy-problem}, for $t>0$ a solution of
Problem~\eqref{eq:monodromy-problem2} with $\Delta(i)>0$ yields a unitarizable
monodromy representation $(L_1,\dots,L_4)$.
In particular, $\mathfrak r$ is real on the unit circle, which implies the
additional constraint
\begin{equation}\label{eq:monodromy-complementary}
\mathcal F_3(t,r,x):=\widehat{\mathfrak r}-\widehat{\mathfrak r}^{\,*}=0.
\end{equation}
Thus, in order to select the relevant branch of solutions, we solve
Problem~\eqref{eq:monodromy-problem2} together with
\eqref{eq:monodromy-complementary} at $t=0$.

Assume that $t=0$.
By Proposition~12 in \cite{HHT2} and Proposition~57 in \cite{HHTSD}, we have
\[
\widehat{\mathfrak p}\big|_{t=0}=2\pi r x_3,\qquad
\widehat{\mathfrak q}\big|_{t=0}=2\pi r x_2,
\qquad\text{and}\qquad
\widehat{\mathfrak r}\big|_{t=0}=2\pi i r x_1.
\]
Recall that the negative parts of $x_1,x_2,x_3$ are fixed.
The equations
$\mathcal F_1=\mathcal F_2=\mathcal F_3=0$ and $\mathcal H=0$
then imply that $x_1,x_2,x_3$ are Laurent polynomials of degree~$1$, given by
\[
\begin{cases}
x_1=\tfrac{1}{2}(\lambda^{-1}-\lambda),\\[0.2em]
x_2=-\tfrac{1}{2}\sin(\varphi)(\lambda^{-1}-2+\lambda),\\[0.2em]
x_3=-\tfrac{1}{2}\cos(\varphi)(\lambda^{-1}+\lambda)+x_{3,0}.
\end{cases}
\]

Substituting these expressions into $\mathcal K$ yields
\[
\mathcal K
=
r^2\Bigl[
(\lambda^{-1}+\lambda)\bigl(-\sin^2(\varphi)-\cos(\varphi)x_{3,0}\bigr)
+
\lambda^0\Bigl(
\tfrac{1}{2}+\tfrac{3}{2}\sin^2(\varphi)+\tfrac{1}{2}\cos^2(\varphi)+x_{3,0}^2
\Bigr)
\Bigr].
\]
Vanishing of the coefficients of $\lambda^{\pm1}$ determines
\[
x_{3,0}
=
-\frac{\sin^2(\varphi)}{\cos(\varphi)}
=
-\frac{1}{\cos(\varphi)}+\cos(\varphi),
\]
and $\mathcal K$ simplifies to
\[
\mathcal K=\frac{r^2}{\cos^2(\varphi)},
\]
which yields $r=\cos(\varphi)$.

Following the proof of Proposition~16 in \cite{HHT2}, we now solve the system
\eqref{eq:monodromy-problem2} for small $t$ using the Implicit Function Theorem.
By definition, $\mathcal F_j=-\mathcal F_j^*$, and by symmetry
(Proposition~14 in \cite{HHT2}) we have $\overline{\mathcal F}_j=\mathcal F_j$.
Thus it suffices to solve $\mathcal F_j^+=0$.

The partial differential of $(\mathcal F_1^+,\mathcal F_2^+,\mathcal H)$ with
respect to the parameters $(x_1,x_2,x_3,r)$ at $t=0$ is
\begin{align*}
d\mathcal F_1^+&=2\pi\cos(\varphi)\,dx_3^+,\\
d\mathcal F_2^+&=2\pi\cos(\varphi)\,dx_2^+,\\
d\mathcal H&=2\pi\cos(\varphi)\,dx_3(\lambda=1)
=2\pi\cos(\varphi)\bigl(dx_3^0+dx_3^+(\lambda=1)\bigr).
\end{align*}
The partial differential of $(\mathcal F_1^+,\mathcal F_2^+,\mathcal H)$ with
respect to $(x_2,x_3^+)$ is therefore an isomorphism from
$\mathcal W^{\ge0}_{\mathbb R}\times\mathcal W^{>0}_{\mathbb R}$ to
$\mathcal W^{>0}_{\mathbb R}\times\mathcal W^{>0}_{\mathbb R}\times\mathbb R$.
By the Implicit Function Theorem, the equations
$(\mathcal F_1^+,\mathcal F_2^+,\mathcal H)=(0,0,0)$ uniquely determine
$(x_2,x_3^+)$ as functions of $t$ and the remaining parameters
$(x_1,x_3^0,r)$.
Moreover, the partial derivatives of $x_2$ and $x_3^+$ with respect to these
remaining parameters vanish at $t=0$.

The partial differential of $\mathcal K$ with respect to the remaining parameters
is
\begin{align*}
d\mathcal K
&=2r\,dr+r^2(-2x_1\,dx_1+2x_3\,dx_3^0)\\
&=2\cos(\varphi)\,dr
+\cos^2(\varphi)(\lambda-\lambda^{-1})\,dx_1
+\bigl(-2\cos(\varphi)-\cos^3(\varphi)(\lambda-2+\lambda^{-1})\bigr)\,dx_3^0.
\end{align*}
Using Proposition~66 in Appendix~A of \cite{HHT2}, we expand $\lambda\mathcal K$ as
\[
\lambda\mathcal K
=
\mathcal K_{-1}+\lambda\mathcal K_0+(\lambda^2-1)\widehat{\mathcal K},
\qquad
\widehat{\mathcal K}\in\mathcal W^{\ge0}_{\mathbb R}.
\]
This yields
\begin{align*}
d\widehat{\mathcal K}&=\cos^2(\varphi)\,dx_1-\cos^3(\varphi)\,dx_3^0,\\
d\mathcal K_0&=2\cos(\varphi)\,dr-2\cos(\varphi)\sin^2(\varphi)\,dx_3^0,\\
d\mathcal K_{-1}&=-2\cos^3(\varphi)\,dx_3^0.
\end{align*}
The partial differential of $(\widehat{\mathcal K},\mathcal K_0,\mathcal K_{-1})$
with respect to $(x_1,r,x_3^0)$ has upper triangular form and defines an
automorphism of $\mathcal W^{\ge0}_{\mathbb R}\times\mathbb R^2$.
By the Implicit Function Theorem, the equation
$(\widehat{\mathcal K},\mathcal K_0,\mathcal K_{-1})=(0,1,0)$ uniquely determines
$(x_1,r,x_3^0)$ as analytic functions of $t$ for $t$ sufficiently small.

Finally, we compute
\[
\Delta
=
4\bigl(1-t^2\widehat{\mathfrak p}^2\bigr)
\bigl(1-t^2\widehat{\mathfrak q}^2\bigr)
-4\cos^2(2\pi t)
=
16\pi^2 t^2\bigl(1-r^2x_2^2-r^2x_3^2\bigr)+O(t^4)
=
-16\pi^2 t^2 r^2x_1^2+O(t^4).
\]
At $t=0$ and $\lambda=i$ we have $x_1=-i$, and therefore
$\Delta(\lambda=i)>0$ for $t>0$ sufficiently small.
\end{proof}

\subsection{The proof of Proposition~\ref{prop:monodromy-problem}}
\label{proof:prop:monodromy-problem}

We first prove the following lemma.

\begin{Lem}\label{lem:monodromy-problem}
Assume that Problem~\eqref{eq:monodromy-problem} is solved and that there exists
$\lambda_1\in\S^1$ such that $\Delta(\lambda_1)>0$.
Then:
\begin{enumerate}
\item
There exists a diagonal unitarizer $U\in\Lambda SL(2,\mathbb{C})$ such that
$U L_j U^{-1}\in\Lambda SU(2)$ for all $j$, and hence also
$U M_j U^{-1}\in\Lambda SU(2)$ for all $j$.
\item
At $\lambda=1$, we have
\[
\mathfrak p(1)=\pm\sin(2\pi t),\qquad
\mathfrak r(1)=0,\qquad
M_1(1)=M_2(1)=M_3(1)^{-1}=M_4(1)^{-1}.
\]
\end{enumerate}
\end{Lem}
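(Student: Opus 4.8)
The plan is to establish the two assertions in turn, reducing (1) to the reality of the half--trace functions together with the sign of the Fricke discriminant, and reducing the whole of (2) to the single scalar vanishing $\mathfrak r(1)=0$. For (1) I would first note that the condition $\mathcal K\equiv1$ fixes the conjugacy class of each local monodromy $M_j$, giving half--trace $\cos(2\pi t)$. The coordinates $\mathfrak p,\mathfrak q,\mathfrak r$ obey a Fricke relation whose discriminant in $\mathfrak r$ is exactly $\Delta$, so that $\mathfrak r=\mathfrak p\mathfrak q\pm\tfrac12\sqrt{\Delta}$. By hypothesis $\mathfrak p,\mathfrak q$ are real on $\S^1$, and $\Delta\ge0$ there by analyticity (it is positive at $\lambda_1$ and, as the expansion near $t=0$ shows, vanishes only at the reducible points $\lambda=\pm1$); hence $\mathfrak r$ is real on the selected branch. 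Real half--traces mean the monodromy preserves a nondegenerate Hermitian form, and $\Delta>0$ forces this form to be definite, so the representation is conjugate into $\Lambda\SU(2)$ by some $V\in\Lambda\SL(2,\C)$. The unitariser can be taken diagonal because $L_2=D$ is already a fixed diagonal element of $\SU(2)$: composing $V$ with a suitable loop $W\in\Lambda\SU(2)$ we may arrange $(WV)D(WV)^{-1}=D$ (any $\SU(2)$ element with eigenvalues $\pm i$ is $\SU(2)$--conjugate to $D$), whereupon $U:=WV$ centralises $D$ and is therefore diagonal. Since $L_4=-L_3L_2L_1$ and each $M_j$ is a word in $L_1,L_2,L_4$, conjugation by $U$ sends all $L_j$ and all $M_j$ into $\Lambda\SU(2)$.

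For (2) the starting point is that $\tr L_1=\tr(CD)=0$ and $\tr L_3=\tr C=0$ for every $\lambda$, so $L_1$ and $L_3$ are trace--free. The imposed condition $\mathfrak q(1)=0$ reads $\tr\!\big(L_1(1)D\big)=0$, which together with $\tr L_1(1)=0$ forces $L_1(1)=\minimatrix{0&b\\ c&0}$ with $bc=-1$, while $L_3(1)=\minimatrix{-i\mathfrak p(1)&f\\ g& i\mathfrak p(1)}$. A short $2\times2$ computation then shows that each asserted identity reduces to one scalar equation: using $M_2=L_1^{-1}L_4L_1$ and $L_4=-L_3L_2L_1=-L_3DL_1$ one finds $[L_1(1),L_3(1)D]=i(fc+gb)\,\minimatrix{1&0\\0&-1}$, so $M_1(1)=M_2(1)$ iff $fc+gb=0$; likewise $D\,L_4(1)\,D^{-1}=L_4(1)^{-1}$ reduces to $fc+gb=0$, giving $M_1(1)=M_3(1)^{-1}$, and $M_4(1)^{-1}=M_1(1)$ then follows from $M_4=L_2^{-1}M_2L_2$. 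Since $\mathfrak r(1)=-\tfrac12\tr\!\big(D\,L_4(1)\big)=\tfrac12(fc+gb)$, all of these identities hold precisely when $\mathfrak r(1)=0$. Feeding $\mathfrak q(1)=\mathfrak r(1)=0$ into the Fricke relation (equivalently $\Delta(1)=0$) then gives $\mathfrak p(1)^2=\sin^2(2\pi t)$, that is $\mathfrak p(1)=\pm\sin(2\pi t)$, completing (2).

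It remains to prove $\mathfrak r(1)=0$, equivalently $fc+gb=0$, and this is the step I expect to be the main obstacle. Here the reflection symmetries of the potential must be used: the equivariances $\delta^*\eta=D^{-1}\eta D$, $\tau^*\eta=C^{-1}\eta C$ and $\sigma^*\eta=\overline\eta$, specialised at the symmetric base points $z=1$ (fixed by $\tau$ and $\sigma$) and $z=i$ (fixed by $\tau\delta$ and by $\sigma\tau$), impose relations between $\mathcal P=\Phi_{\mathrm{DPW}}(1)$ and $\mathcal Q=\Phi_{\mathrm{DPW}}(i)$ that constrain the entries $b,c,f,g$. Tracking these relations carefully — including the base--point conjugations and the monodromy factors picked up along the relevant paths, which is precisely the bookkeeping carried out in Subsection~\ref{app:monodromy-solvability} — should yield $fc+gb=0$ at the Sym point. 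I expect this symmetry bookkeeping, rather than any single conceptual step, to be the hardest part; a secondary technical point in (1) is the uniform existence of the diagonal unitariser as an element of $\Lambda\SL(2,\C)$ across all of $\S^1$, in particular in the limit towards the reducible points $\lambda=\pm1$ where $\Delta$ vanishes.
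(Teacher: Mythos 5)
Your reduction of point (2) to the single scalar condition $\mathfrak r(1)=0$ is correct (I checked the $2\times2$ computations: with $L_1(1)$ off-diagonal and $L_3(1)$ trace-free one indeed gets $[L_1(1),L_3(1)D]=i(fc+gb)\,\mathrm{diag}(1,-1)$, $\mathfrak r(1)=\tfrac12(fc+gb)$, and the Fricke relation with $\mathfrak q(1)=\mathfrak r(1)=0$ gives $\mathfrak p(1)^2=\sin^2(2\pi t)$), and it is a legitimate alternative to the paper's route, which instead derives $M_1M_3=\Id$ and $M_2M_3=\Id$ from the trace identities $\tr(M_1M_3)=2-4\mathfrak r^2$, $\tr(M_2M_3)=2-4\mathfrak q^2$ combined with unitarity. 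But the proposal has a genuine gap exactly where you anticipate one: you never prove $\mathfrak r(1)=0$, you only assert that ``symmetry bookkeeping \ldots should yield'' it. That step is the entire analytic content of point (2), and the paper's argument for it is short and specific: the $\sigma$- and $\delta$-symmetries give $\overline{\mathcal P}=\mathcal P$ and $\overline{\mathcal Q}=D^{-1}\mathcal QD$, whence the identity $\tr(\overline{L_1L_3})=\tr(L_2L_4)$; at $\lambda=1$ the condition $\mathfrak q(1)=0$ makes $L_1$ off-diagonal, so $L_2L_1L_2=L_1$ and therefore $\tr(L_2L_4)=-\tr(L_3DL_1D)=-\tr(L_1L_3)$. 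Combining the two identities shows $\tr(L_2L_4)(1)=-2\mathfrak r(1)$ is \emph{purely imaginary}; since by point (1) it is also \emph{real} (the representation is unitarizable on $\S^1$), it vanishes. Note that this mechanism needs both ingredients -- the symmetry identity and the reality from unitarizability -- and your sketch invokes only the first; you have the second available from your part (1) but never combine them.

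A secondary weakness is in your part (1), which the paper simply outsources to Proposition~11 of \cite{HHT2}. Your claim that $\Delta\ge0$ on all of $\S^1$ ``by analyticity'' because it is positive at $\lambda_1$ is not an argument: an analytic real function on $\S^1$ can change sign, and the small-$t$ expansion $\Delta=16\pi^2t^2r^2\sin^2\theta+O(t^4)$ you implicitly appeal to is not available under the hypotheses of the lemma (which assume only that Problem~\eqref{eq:monodromy-problem} is solved and $\Delta(\lambda_1)>0$, not that one is on the IFT branch near $t=0$). The actual proof of unitarizability requires an open-closed/connectedness argument on the unit circle together with a careful treatment of the reducible points $\lambda=\pm1$; this is precisely what the cited proposition supplies, and also resolves the issue you flag about the diagonal unitarizer extending across the points where $\Delta$ vanishes.
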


\begin{proof}
Point~(1) is Proposition~11 in \cite{HHT2}.
That proposition assumes the existence of $\lambda_1\in\S^1$ such that
$\mathfrak p(\lambda_1)=\mathfrak q(\lambda_1)=0$, but this hypothesis is used
only to ensure $\Delta(\lambda_1)>0$, which we assume here.

\medskip

For point~(2), the $\sigma$ and $\delta$ symmetries imply, as in \cite{HHT2}, that
$\overline{\mathcal P}=\mathcal P$ and $\overline{\mathcal Q}=D^{-1}\mathcal QD$.
Hence
\begin{align*}
\tr(\overline{L_1L_3})
&=\tr\!\left((D^{-1}\mathcal QD)(CD)(D^{-1}\mathcal Q^{-1}D)\,
\mathcal P(-C)\mathcal P^{-1}\right)\\
&=\tr\!\left(-D\mathcal P C\mathcal P^{-1}D\,\mathcal QCD\mathcal Q^{-1}\right)
=\tr(L_2L_4).
\end{align*}
On the other hand, if $\mathfrak q=0$, then $L_1L_2$ is off-diagonal, hence $L_1$
is off-diagonal and $L_2L_1L_2=L_1$.
Therefore, at $\lambda=1$,
\[
\tr(L_2L_4)=-\tr(L_3DL_1D)=-\tr(L_3L_1).
\]
Since $\tr(L_2L_4)=-2\mathfrak r$ is real at $\lambda=1$, we conclude that
$\mathfrak r(1)=0$.
The value of $\mathfrak p(1)$ then follows from Fricke equation~(14) in
\cite{HHT2}.

Using $L_1^2=L_2^2=L_3^2=-\Id$ and $L_4=-L_3L_2L_1$, we compute
\[
\tr(M_1M_3)
=
-\tr((L_2L_4)^2)
=
2-(\tr(L_2L_4))^2
=
2-4\mathfrak r^2,
\]
and
\[
\tr(M_2M_3)
=
\tr\!\bigl(L_1(L_3L_2L_1)L_1L_2(L_3L_2L_1)L_2\bigr)
=
-\tr((L_1L_2)^2)
=
2-4\mathfrak q^2.
\]
Hence, at $\lambda=1$ we have $\tr(M_1M_3)=2$ and $UM_1M_3U^{-1}\in SU(2)$, so
$M_1M_3=\Id$.
Similarly, $M_2M_3=\Id$.
\end{proof}

\begin{proof}[Proof of Proposition~\ref{prop:monodromy-problem}]
Let $\widetilde\eta=\pi^*\eta$ be the pullback of $\eta$ to $\Sigma_k$, with poles
at $\widetilde p_j=\pi^{-1}(p_j)$.
Let $\widetilde\Phi_{\mathrm{DPW}}$ denote the solution of
$d\widetilde\Phi_{\mathrm{DPW}}=\widetilde\Phi_{\mathrm{DPW}}\widetilde\eta$ on
\[
\Sigma_k^*=\Sigma_k\setminus\{\widetilde p_1,\dots,\widetilde p_4\},
\]
with initial condition $\widetilde\Phi_{\mathrm{DPW}}(\widetilde 0)=U$, where
$\pi(\widetilde 0)=0$.
For $\gamma\in\pi_1(\Sigma_k^*,\widetilde 0)$, we can write
\[
\pi\circ\gamma=\prod_{j=1}^{\ell}\gamma_{i_j}
\]
for some $\ell\in\mathbb{N}$ and indices $i_1,\dots,i_\ell\in\{1,2,3,4\}$.
The condition that $\gamma$ is closed translates to
\[
\sum_{j=1}^{\ell}\varepsilon(i_j)\equiv 0 \pmod{k},
\]
where $\varepsilon(1)=\varepsilon(2)=1$ and $\varepsilon(3)=\varepsilon(4)=-1$.
The monodromy of $\widetilde\Phi_{\mathrm{DPW}}$ along $\gamma$ is
\[
\mathcal M(\widetilde\Phi_{\mathrm{DPW}},\gamma)
=
\prod_{j=1}^{\ell}U M_{i_j}U^{-1}
\in\Lambda SU(2).
\]
Moreover, by point~(2), at $\lambda=1$ we have
\[
\mathcal M(\widetilde\Phi_{\mathrm{DPW}},\gamma)\big|_{\lambda=1}
=
UM_1(1)^{\sum_{j=1}^{\ell}\varepsilon(i_j)}U^{-1}
=
\pm\Id,
\]
since $M_1^k=-\Id$.
Hence the immersion $f$ has translational periods only.

In a neighborhood of $\widetilde p_1$ we may use $y$ as a local coordinate, and
we have
\[
\widetilde\eta
=
krt\,A_1\,\frac{dy}{y}+O(y^{k-1})\,dy
=
\frac{r}{2}A_1\,\frac{dy}{y}+O(y^{k-1})\,dy.
\]
As in \cite{HHT2}, consider the gauge
\[
G_1=
\begin{pmatrix}1&0\\ \kappa&1\end{pmatrix}
\begin{pmatrix} y^{-1/2}&0\\0&y^{1/2}\end{pmatrix},
\qquad
\kappa=\frac{1-iry_1}{ry_2+iry_3}.
\]
Then
\[
\widetilde\eta.G_1
=
\begin{pmatrix}
0 & r(y_2+iy_3)\\
\frac{\mathcal K-1}{r(y_2+iy_3)y^2} & 0
\end{pmatrix}
\frac{dy}{2y}
+O(y^{k-2}),
\]
which is holomorphic at $y=0$ since $\mathcal K=1$.
Thus $\widetilde p_1$ is an apparent singularity; the same holds for
$\widetilde p_2,\widetilde p_3,\widetilde p_4$ by symmetry (or using analogous
gauges).

Finally, the proof of the area formula is identical to the proof of point~(2) of
Proposition~19 in \cite{HHT2}, based on formula~(22) in \cite{HHT2} for the area
in terms of regularizing gauges, and we omit the details.
\end{proof}

\subsection{Symmetries and geometric constraints}
\label{app:monodromy-solvability}
In this section, we prove that the immersion $f$ has the desired symmetries by
analyzing the image of the boundary of the quarter disk
\[
|z|<1,\qquad \Re(z)>0,\qquad \Im(z)>0.
\]
We use the standard identification of $\mathfrak{su}(2)$ with $\mathbb{R}^3$,
given by
\[
(y_1,y_2,y_3)\;\longleftrightarrow\;
\begin{pmatrix}
-i y_3 & y_1+i y_2\\
-y_1+i y_2 & i y_3
\end{pmatrix}.
\]
(In this subsection, $y_1,y_2,y_3$ denote the coordinates in $\mathbb{R}^3$ to
avoid confusion with the parameters $x_1,x_2,x_3$.
With respect to the geometric description given at the beginning of
Section~\ref{section:lawson}, we regard $y_1$ as the vertical direction.)

\begin{Pro}
The images of the segments $[0,1]$ and $[0,i]$ are symmetry curves lying in the
planes $y_1=0$ and $y_2=0$, respectively.
The images of the circular arcs $\widefrown{[p_1,i]}$ and
$\widefrown{[0,p_1]}$ are symmetry curves lying in planes parallel to $y_3=0$
and to $\sin(2\pi t)y_2=\cos(2\pi t)y_3$, respectively, where
$t=\tfrac{1}{2k}$.
\end{Pro}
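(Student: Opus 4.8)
The plan is to use the symmetries $\sigma$, $\delta$, $\tau$ of the DPW potential established in Section~\ref{sec:dpwlawsonpot}, together with the Sym--Bobenko formula (Remark~\ref{Rem:scaling2}), to show that each boundary segment of the quarter disk is fixed (as a set) by a Euclidean reflection, and that its image therefore lies in the mirror plane. The key principle is standard in the DPW approach: if a holomorphic reflection of $\Sigma$ acts on the potential $\eta$ by conjugation with a constant matrix $A$ combined with complex conjugation of $\lambda$, then on the level of the immersion this induces a rigid motion of $\R^3$, and the fixed-point set of the reflection maps to the fixed plane of that motion. The four boundary pieces correspond precisely to the four symmetry relations available.

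First I would treat the two straight segments. On $[0,1]$ we have $z\in\R$, so $\sigma(z)=\bar z=z$ fixes the segment pointwise; the relation $\sigma^*\eta=\bar\eta$ (where $\bar\eta(z,\lambda)=\overline{\eta(z,\bar\lambda)}$) shows that $\Phi_{\mathrm{DPW}}$ along $[0,1]$ satisfies a reality condition, which under the Sym--Bobenko formula forces the image to lie in a reflection plane. Tracking the matrix identification $\mathfrak{su}(2)\cong\R^3$ given in the statement, one checks that the relevant reflection fixes the plane $y_1=0$. The segment $[0,i]$ is the fixed locus of the composed symmetry $\sigma\circ\delta$ (since $\overline{\delta(z)}=\overline{-z}=-\bar z$, which equals $z$ exactly when $z\in i\R$); combining $\sigma^*\eta=\bar\eta$ with $\delta^*\eta=D^{-1}\eta D$ yields a reflection whose fixed plane, after the same coordinate bookkeeping, is $y_2=0$.

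Next I would handle the two circular arcs, which are the more delicate cases because the relevant symmetry is $\tau(z)=1/z$ (inversion in the unit circle), possibly composed with $\sigma$ or $\delta$. The arc $\widefrown{[p_1,i]}$ and the arc $\widefrown{[0,p_1]}$ each lie on $|z|=1$, fixed by an appropriate composition involving $\tau$; here the relation $\tau^*\eta=C^{-1}\eta C$ with $C=\minimatrix{0&i\\i&0}$ enters. Conjugation by $C$ produces a different constant rotation in $\R^3$, and crucially the \emph{non-closing} of the monodromy at $\lambda=1$ — encoded by the value $\mathfrak p(1)=\pm\sin(2\pi t)$ from Lemma~\ref{lem:monodromy-problem}(2) — is what tilts the mirror plane away from a coordinate plane: the arc $\widefrown{[0,p_1]}$ maps into the plane $\sin(2\pi t)y_2=\cos(2\pi t)y_3$ rather than simply $y_3=0$. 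I would extract this tilt angle $2\pi t$ directly from the half-trace data at the Sym point.

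\textbf{The main obstacle} I anticipate is the precise identification of \emph{which} plane each reflection fixes, i.e. the careful bookkeeping translating each symmetry $A\in\{D,C,\Id\}$ (and its complex-conjugate behavior in $\lambda$) through the Sym--Bobenko formula $f=-2\dot F$ into an explicit reflection of $\R^3$ under the fixed identification $\mathfrak{su}(2)\cong\R^3$. The reality/conjugation structure must be matched carefully, since $\tau$ reverses orientation on the circle and the composition with $\sigma$ affects the sign of $\lambda\mapsto\bar\lambda$; keeping track of whether one obtains a reflection versus a rotation, and recovering the tilt $2\pi t$ for the arcs, is where the genuine computation lies. The straight-segment cases should follow routinely once the dictionary between potential symmetries and Euclidean motions is set up; the arc cases, and in particular producing the tilted plane $\sin(2\pi t)y_2=\cos(2\pi t)y_3$ from $\mathfrak p(1)=\pm\sin(2\pi t)$, will require the full strength of the monodromy analysis of Lemma~\ref{lem:monodromy-problem}.
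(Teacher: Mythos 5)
Your overall strategy---symmetries of the DPW potential combined with the Sym--Bobenko formula---is exactly the paper's strategy, and your treatment of the two straight segments is essentially the paper's proof: since $\sigma$ and $\sigma\delta$ fix the base point $z=0$ of the initial condition, the relations
$\sigma^*\Phi_{\mathrm{DPW}}=\overline{\Phi_{\mathrm{DPW}}}$ and
$\sigma\delta^*\Phi_{\mathrm{DPW}}=D\,\overline{\Phi_{\mathrm{DPW}}}\,D^{-1}$
hold with no correction term, giving $\sigma^*f=-\overline f$ and $\sigma\delta^*f=-D\,\overline f\,D^{-1}$, i.e.\ reflections in the planes $y_1=0$ and $y_2=0$.

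For the circular arcs, however, there is a genuine gap. The anti-holomorphic involution $\sigma\tau(z)=1/\overline z$ does \emph{not} fix the base point $z=0$ where $\Phi_{\mathrm{DPW}}(0)=U$ is imposed, so the symmetry of the potential only lifts to the frame up to a $\lambda$-dependent left cocycle:
$\sigma\tau^*\Phi_{\mathrm{DPW}}=R\,C\,\overline{\Phi_{\mathrm{DPW}}}\,C^{-1}$ in the north sector and
$\sigma\tau^*\Phi_{\mathrm{DPW}}=S\,C\,\overline{\Phi_{\mathrm{DPW}}}\,C^{-1}$ in the east sector, with $R,S\in\Lambda SL(2,\C)$. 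These matrices are the actual content of the proof, and your proposal never introduces them. The paper computes them by evaluating at the fixed points $z=i$ and $z=1$, obtaining $R=UL_1U^{-1}D^{-1}C^{-1}$ and $S=UL_3U^{-1}C^{-1}$, shows both lie in $\Lambda SU(2)$, and determines their values at the Sym point: $R(1)=\Id$ (from $\mathfrak q(1)=0$, reality of the entries, and continuity in $t$), while $S(1)$ is the rotation by angle $2\pi t$, because the off-diagonal entries of $S$ are $\mp\mathfrak p$ and $\mathfrak p(1)=-\sin(2\pi t)$ (the sign being fixed by continuity from $t=0$). Feeding this into Sym--Bobenko gives
\[
\sigma\tau^* f \;=\; -2\,\partial_\tau R(e^{i\tau})\big|_{\tau=0} \;-\; C\,\overline f\,C^{-1},
\qquad
\sigma\tau^* f \;=\; -2\,\partial_\tau S(e^{i\tau})\big|_{\tau=0}S(1)^{-1} \;-\; S(1)\,C\,\overline f\,C^{-1}S(1)^{-1},
\]
i.e.\ affine motions: a translation (the $\tau$-derivative of the cocycle) composed with a reflection whose \emph{linear part is conjugated by} $R(1)$, resp.\ $S(1)$. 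The translation, forced by involutivity of $\sigma\tau$ to be normal to the mirror, is what makes the planes only \emph{parallel} to the coordinate configuration; and the tilted plane $\sin(2\pi t)y_2=\cos(2\pi t)y_3$ arises from conjugation by $S(1)\neq\Id$, not from the matrix $C$. Your stated key principle---that the fixed set of the holomorphic reflection maps into the fixed plane of the \emph{linear} motion determined by the constant matrix $A$ and $\lambda$-conjugation---would assign the plane $y_3=0$ through the origin to \emph{both} arcs, which is false for the east arc. You correctly guess that $\mathfrak p(1)=\pm\sin(2\pi t)$ must be the source of the tilt, but the mechanism by which it enters (the cocycle $S$, its unitarity, and its value and derivative at $\lambda=1$) is missing, and without it the arc cases of the proposition cannot be established.
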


\begin{proof}
We follow closely the proof of Point~(3) of Proposition~19 in \cite{HHT2}.
Throughout this subsection, $\Phi_{\mathrm{DPW}}$ denotes the solution of
\[
d\Phi_{\mathrm{DPW}}=\Phi_{\mathrm{DPW}}\eta
\]
with initial condition $\Phi_{\mathrm{DPW}}(0)=U$, where $U$ is the unitarizer
provided by Proposition~\ref{prop:monodromy-problem}.
The solution is defined on the simply connected domain
$\mathbb{C}$ minus the rays from $p_j$ to $\infty$, $j=1,\dots,4$.
Let $F$ denote the unitary part of $\Phi_{\mathrm{DPW}}$.

Recall that the immersion $f$ is given by the Sym--Bobenko formula
(see Remark~\ref{Rem:scaling2}),
\[
f(z)
=
-2i\,\frac{\partial F}{\partial\lambda}F^{-1}\Big|_{\lambda=1}
=
-2\,\frac{\partial F(e^{i\tau})}{\partial\tau}\Big|_{\tau=0}F(1)^{-1}.
\]

We first analyze the symmetries $\sigma(z)=\overline z$ and
$\sigma\delta(z)=-\overline z$.
As in \cite{HHT2}, we have
\[
\sigma^*\Phi_{\mathrm{DPW}}=\overline{\Phi_{\mathrm{DPW}}},\qquad
\sigma\delta^*\Phi_{\mathrm{DPW}}
=
D\,\overline{\Phi_{\mathrm{DPW}}}\,D^{-1},
\]
which implies
\[
\sigma^*f=-\overline f,\qquad
\sigma\delta^*f=-D\,\overline f\,D^{-1}.
\]
These correspond to reflections across the planes $y_1=0$ and $y_2=0$,
respectively.

Next, we consider the symmetry $\sigma\tau(z)=1/\overline z$ in the north sector
$\varphi<\arg(z)<\pi-\varphi$.
From the identity $\sigma\tau^*\eta=C\,\overline\eta\,C^{-1}$, we obtain
\[
\sigma\tau^*\Phi_{\mathrm{DPW}}
=
R\,C\,\overline{\Phi_{\mathrm{DPW}}}\,C^{-1}
\]
for some $R\in\Lambda SL(2,\mathbb{C})$.
Evaluating at $z=i$ yields
\[
U\mathcal Q
=
R\,C\,\overline U\,\overline{\mathcal Q}\,C^{-1}
=
R\,C\,U\,D\,\mathcal Q\,D^{-1}C^{-1}.
\]
Writing $U=\mathrm{diag}(u,u^{-1})$, we find
\[
R
=
U\mathcal Q C D\mathcal Q^{-1}D^{-1}U^{-1}C^{-1}
=
U L_1 U^{-1} D^{-1}C^{-1}
=
\begin{pmatrix}
u^2(\mathcal Q_{11}^2+\mathcal Q_{12}^2) & -i\mathfrak q\\
-i\mathfrak q & u^{-2}(\mathcal Q_{21}^2+\mathcal Q_{22}^2)
\end{pmatrix}.
\]
Since $U$ unitarizes $L_1$, it follows that $R\in\Lambda SU(2)$.
At $\lambda=1$ we have $\mathfrak q=0$, so $R(1)$ is diagonal.
Moreover, $u$, $\mathcal Q_{11}$, and $\mathcal Q_{22}$ are real, while
$\mathcal Q_{12}$ and $\mathcal Q_{21}$ are purely imaginary, implying that
$R(1)$ has real entries.
Thus $R(1)=\pm\Id$, and continuity from $t=0$ yields $R(1)=\Id$.

Consequently,
\[
\sigma\tau^*f
=
-2\,\frac{\partial R(e^{i\tau})}{\partial\tau}\Big|_{\tau=0}
-
C\,\overline f\,C^{-1}.
\]
The first term represents a translation, while the second term corresponds to
reflection across the plane $y_3=0$.
Since $\sigma\tau$ is an involution, the translation must be vertical.

Finally, we analyze the symmetry $\sigma\tau(z)=1/\overline z$ in the east sector
$-\varphi<\arg(z)<\varphi$.
In this case,
\[
\sigma\tau^*\Phi_{\mathrm{DPW}}
=
S\,C\,\overline{\Phi_{\mathrm{DPW}}}\,C^{-1}
\]
for some $S\in\Lambda SL(2,\mathbb{C})$.
Evaluating at $z=1$ gives
\[
S
=
U\mathcal P C\mathcal P^{-1}U^{-1}C^{-1}
=
U L_3 U^{-1}C^{-1}
=
\begin{pmatrix}
u^2(\mathcal P_{11}^2-\mathcal P_{12}^2) & -\mathfrak p\\
\mathfrak p & u^{-2}(\mathcal P_{22}^2-\mathcal P_{21}^2)
\end{pmatrix}.
\]
It follows that $S\in\Lambda SU(2)$ and that $S(1)$ has real entries.
By Proposition~\ref{prop:monodromy-problem}, we have
$\mathfrak p=\pm\sin(2\pi t)$; continuity from $(t,\lambda)=(0,1)$, where
$\widehat{\mathfrak p}=-2\pi$, implies
$\mathfrak p=-\sin(2\pi t)$.
Thus
\[
S(1)
=
\begin{pmatrix}
\cos(2\pi t) & \sin(2\pi t)\\
-\sin(2\pi t) & \cos(2\pi t)
\end{pmatrix}.
\]
This yields
\[
\sigma\tau^*f
=
-2\,\frac{\partial S(e^{i\tau})}{\partial\tau}\Big|_{\tau=0}S(1)^{-1}
-
S(1)\,C\,\overline f\,C^{-1}S(1)^{-1}.
\]
The first term again represents a translation, while the second corresponds to
reflection across the plane
\[
\sin(2\pi t)y_2=\cos(2\pi t)y_3.
\]
\end{proof}


\section*{Acknowledgements}{\small
The second author is supported by the Beijing Natural Science Foundation
IS23003. The third author is supported by the French ANR project Min-Max (ANR-19-CE40-0014).}

\end{document}